\DeclareMathOperator{\dist}{dist}
\DeclareMathOperator{\graph}{graph}
\DeclareMathOperator{\divergenz}{div}
\def\ol#1{\overline{#1}}
\def\ul#1{\underline{#1}}
\def\R{{\mathbb{R}}}
\def\N{{\mathbb{N}}}
\def\S{{\mathbb{S}}}
\def\D{{\mathcal{D}}}
\def\G{{\mathcal{G}}}
\def\theta{{\vartheta}}
\def\phi{{\varphi}}
\def\epsilon{{\varepsilon}}
\def\dt{{\frac{d}{dt}}}
\def\fracd#1#2{{\frac{\displaystyle #1}{\displaystyle #2}}}
\long\def\umbruch{{\displaybreak[1]}}
\long\def\neueZeile{{\rule{0mm}{1mm}\\[-3.25ex]\rule{0mm}{1mm}}}
\def\heat{\left(\tfrac d{dt}-\Delta\right)}
\def\Amod#1{\left|\nabla^{#1}A\right|^2}
\def\emph#1{\textbf{#1}}
\mathchardef\ordinarycolon\mathcode`\:
\newtheorem{theorem}{Theorem}[section]
\newtheorem{lemma}[theorem]{Lemma}
\newtheorem{proposition}[theorem]{Proposition}
\newtheorem{corollary}[theorem]{Corollary}
\newtheorem{definition}[theorem]{Definition}
\newtheorem{remark}[theorem]{Remark}
\newtheorem{assumption}[theorem]{Assumption}
\numberwithin{equation}{section}
\begin{document}
\title{Mean curvature flow without singularities}

%    Information for first author
\author{Mariel S\'aez}
%    Address of record for the research reported here
\thanks{The first author was partially supported by Conicyt under
  grants Fondecyt regular 1110048 and proyecto Anillo ACT-125, CAPDE}
\address{Mariel S\'aez, Departamento de Matem\'aticas, Avda.{}
  Vicu\~{n}a Mackenna 4860. Macul, Santiago, Chile}
%    Current address
\curraddr{}
\def\ChileHome{@mat.puc.cl}
\email{mariel\ChileHome}
%    \thanks will become a 1st page footnote.

%    Information for first author
\author{Oliver C. Schn\"urer}
%    Address of record for the research reported here
\address{Oliver C. Schn\"urer, Fachbereich Mathematik und Statistik,
  Universit\"at Konstanz, 78457 Konstanz, Germany}
%    Current address
\curraddr{}
\def\AmSeeHome{@uni-konstanz.de}
\email{Oliver.Schnuerer\AmSeeHome}
%    \thanks will become a 1st page footnote.
\thanks{}

%    General info
\subjclass[2000]{53C44}
% 53C44 Geometric evolution equations (mean curvature flow) 

\date{\today.}

\dedicatory{}

\keywords{}

\begin{abstract}
  We study graphical mean curvature flow of complete solutions defined
  on subsets of Euclidean space. We obtain smooth long time
  existence. The projections of the evolving graphs also solve mean
  curvature flow. Hence this approach allows to smoothly flow through
  singularities by studying graphical mean curvature flow with one
  additional dimension. 
\end{abstract}

\maketitle

\tableofcontents

\section{Introduction}

\subsection*{Results} 
We start by stating a simplified version of our main result, which
holds for bounded domains.  Let us consider mean curvature flow for
graphs defined on a relatively open set
\begin{equation}
  \label{set eq}
  \Omega\equiv\bigcup\limits_{t\ge0}\Omega_t\times\{t\}
  \subset\R^{n+1}\times[0,\infty).
\end{equation}

Then we have
\begin{theorem}[Existence on bounded domains]
  \label{exist thm intro}
  Let $A\subset\R^{n+1}$ be a bounded open set and $u_0\colon A\to\R$
  a locally Lipschitz continuous function with $u_0(x)\to\infty$ for
  $x\to x_0\in\partial A$. \par
  Then there exists $(\Omega,u)$, where
  $\Omega\subset\R^{n+1}\times[0,\infty)$ is relatively open, such
  that $u$ solves graphical mean curvature flow
  \[\dot u=\sqrt{1+|Du|^2}\cdot \divergenz\left(\frac{Du}
    {\sqrt{1+|Du|^2}}\right)\quad\text{in }\Omega
  \setminus(\Omega_0\times\{0\}).\] The function $u$ is smooth for
  $t>0$ and continuous up to $t=0$, $\Omega_0=A$, $u(\cdot,0)=u_0$ in
  $A$ and $u(x,t)\to\infty$ as $(x,t)\to\partial\Omega$, where
  $\partial\Omega$ is the relative boundary of $\Omega$ in
  $\R^{n+1}\times[0,\infty)$.
\end{theorem}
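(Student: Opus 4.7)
The plan is to approximate $u_0$ by a monotone sequence of globally defined, locally Lipschitz initial data on $\R^{n+1}$, evolve each approximant as an entire graph using Ecker-Huisken's long-time existence theorem for graphical mean curvature flow, and pass to the pointwise limit; the desired $\Omega$ will be the space-time region on which the approximants stay finite.

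\textbf{Construction.} For $k\in\N$ put $A_k := \{x\in A : u_0(x)<k\}$. Since $u_0\to\infty$ at each point of $\partial A$, one has $\overline{A_k}\subset A$. I define
\begin{equation*}
  v_k(x) := \begin{cases} \min\{u_0(x),k\} & x \in A,\\ k & x \in \R^{n+1}\setminus A. \end{cases}
\end{equation*}
Because $u_0>k$ in a one-sided neighborhood of $\partial A$ inside $A$, the function $v_k$ equals the constant $k$ in an open neighborhood of $\partial A$ and is therefore locally Lipschitz on all of $\R^{n+1}$; moreover $v_k\le v_{k+1}$ and $v_k=u_0$ on $A_k$. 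Ecker-Huisken's theorem then produces smooth entire graphical solutions $u^k\colon\R^{n+1}\times(0,\infty)\to\R$, continuous up to $t=0$ with $u^k(\cdot,0)=v_k$, and the parabolic comparison principle gives $u^{k+1}\ge u^k$. I set $u(x,t):=\lim_k u^k(x,t)\in(-\infty,\infty]$, $\Omega_t:=\{u(\cdot,t)<\infty\}$, and $\Omega$ as in \eqref{set eq}.

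\textbf{Regularity and the initial trace.} On any space-time cylinder on which the $u^k$ are uniformly bounded from above, the Ecker-Huisken interior gradient and higher-derivative estimates apply uniformly in $k$ (the $u^k$ are also uniformly bounded from below by $\inf_A u_0$). By monotonicity and Arzel\`a--Ascoli this yields smooth convergence $u^k\to u$ on compact subsets of the interior of $\Omega\cap\{t>0\}$, and $u$ solves graphical mean curvature flow there. Since $v_k=u_0$ on $A_k$ we obtain $u(x,0)=u_0(x)$ for every $x\in A$, and since $v_k(x)=k\to\infty$ for $x\notin A$ we obtain $\Omega_0=A$. Continuity of $u$ up to $t=0$ is achieved by a barrier argument based on the local Lipschitz modulus of $u_0$ near a prescribed point of $A$: one squeezes $u^k$ between Lipschitz graphs evolving under the flow, with bounds independent of $k$ for $k$ sufficiently large.

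\textbf{Main obstacle.} The crucial and most delicate step is the blow-up $u\to\infty$ at $\partial\Omega$, together with the openness of $\Omega$. For a boundary point $(x_0,t_0)\in\partial\Omega$ with $t_0>0$ one rules out bounded limits along sequences $(x_n,t_n)\in\Omega$ with $(x_n,t_n)\to(x_0,t_0)$ by combining smoothness of each $u^k$ with the forced divergence $\lim_k u^k(x_0,t_0)=\infty$ coming from $(x_0,t_0)\notin\Omega$; converting this into a statement about $u$ requires uniform local height and gradient bounds for the $u^k$, i.e., precisely the interior estimates. The case $t_0=0$, $x_0\in\partial A$ is harder and is handled by using that $v_k\equiv k$ in a neighborhood of every $x_0\in\partial A$, so that lower barriers (for instance, translating graphs of the form $k+\phi$ with $\phi$ a subsolution supported away from $\partial A$) push $u^k(x,t)$ to infinity uniformly on a small parabolic neighborhood as $k\to\infty$. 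These barrier constructions, rather than the general flow theory, are where the real work lies.
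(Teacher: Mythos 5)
There is a genuine gap, and it sits exactly at the point you label the ``main obstacle''. Your construction (cap $u_0$ at height $k$, extend by the constant $k$, evolve by Ecker--Huisken, let $k\to\infty$, define $\Omega$ as the finiteness set) is the same skeleton as the paper's proof, but the analytic ingredient that makes the limit work is not supplied by the tools you invoke. You propose to get uniform-in-$k$ regularity from the Ecker--Huisken \emph{interior} estimates ``on any space-time cylinder on which the $u^k$ are uniformly bounded from above''. Those estimates are localized in \emph{spatial} balls: to apply them at $(x_0,t_0)$ uniformly in $k$ you need a parabolic cylinder $B_\rho(x_0)\times[t_0-\rho^2,t_0]$ on which the oscillation of $u^k$ is bounded independently of $k$ (the only $k$-independent bounds you have a priori are $\inf_A u_0\le u^k\le k$, whose oscillation blows up). But the existence of such cylinders around every point of $\Omega$ is equivalent to $\Omega$ being open and $u$ being locally bounded there --- which is part of what you must prove, not something you may assume. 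Near $\partial\Omega_t$ the region where the $u^k$ stay below a fixed level need not contain any spatial ball of controlled size, so the spatially localized estimates cannot be fed with uniform data precisely where the theorem has content ($u\to\infty$ at $\partial\Omega$, openness of $\Omega$, smooth convergence up to the boundary region). The same circularity affects your treatment of boundary points with $t_0>0$: ``uniform local height and gradient bounds for the $u^k$'' are named as what is needed, but no mechanism independent of the conclusion is given for producing them.

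This is exactly why the paper does not use ball-localized interior estimates. Its key idea is that powers of the height function itself can serve as the cutoff: Theorem \ref{C1 est thm} gives $vu^2\le\max_{t=0}vu^2$ on $\{u<0\}$ (equivalently, a gradient bound wherever $u^k\le a$, with constants depending only on the initial data below level $a$ and not on the spatial location), and Theorems \ref{C2 est thm} and \ref{Ck est thm} do the same for curvature and its derivatives, while Lemma \ref{hoelder t lem} (a sphere-barrier argument) gives the matching H\"older-in-time control where $u^k\le -1$. With these height-localized, $k$-independent estimates, the Arzel\`a--Ascoli variant (Lemma \ref{AA lem}) and Lemma \ref{maximality lem} deliver in one stroke that $\Omega$ is open, that $u^k\to u$ locally uniformly (then smoothly for $t>0$ via interpolation), that $\Omega_0=A$ with $u(\cdot,0)=u_0$, and that $u\to\infty$ at $\partial\Omega$. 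To repair your argument you would need to prove such a ``gradient bound wherever the height is bounded'' estimate (or an equivalent substitute); without it, the passage from pointwise monotone convergence to the stated structure of $(\Omega,u)$ does not go through. (Minor additional points you should also address: the comparison $u^{k+1}\ge u^k$ for entire graphs needs a justification, e.g.\ by constructing the $u^k$ as limits of ball problems with ordered boundary data as the paper does via Lemma \ref{ex approx sol lem}; and since Ecker--Huisken solutions for Lipschitz data are smooth only for $t>0$, your a priori estimates must be run from positive times or from mollified initial data, as in the paper's $\min_\epsilon$ construction.)
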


Such smooth solutions yield weak solutions to mean curvature flow.  To
describe the relation, we use the measure theoretic boundary
$\partial^\mu\Omega_t$ as introduced in Section~\ref{lsf appendix}. We
have the following informal version of our main theorem concerning the
level set flow:
\begin{theorem}[Weak flow]
  \label{weak flow thm intro}
  Let $(A,u_0)$ and $(\Omega,u)$ be as in Theorem
  \ref{exist thm intro}. Assume that the level set evolution of
  $\partial\Omega_0$ does not fatten. Then it coincides with
  $(\partial^\mu\Omega_t)_{t\ge0}$. 
\end{theorem}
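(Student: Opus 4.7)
The plan is to exploit the extra-dimension setup: the vertical translates of the smooth graphical MCF from Theorem~\ref{exist thm intro} are again smooth MCF solutions, and by letting the translation parameter tend to infinity one recovers the cylinder over $\partial^\mu\Omega_t$ as a Hausdorff limit of smooth MCF, which the non-fattening hypothesis then forces to coincide with the cylinder over the level set flow.

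Concretely, set $\Gamma_t:=\graph u(\cdot,t)\subset\R^{n+2}$; by Theorem~\ref{exist thm intro} each $\Gamma_t$ is a smooth, complete MCF in $\R^{n+2}$ whose projection onto $\R^{n+1}$ is $\Omega_t$. For every $c\in\R$ the translate $\Gamma_t^c:=\Gamma_t-c\,e_{n+2}$ is still a smooth MCF with the same projection. Because $u(\cdot,t)\to\infty$ at $\partial\Omega_t$, the intersection of $\Gamma_t^c$ with any horizontal slab $\{|x_{n+2}|\le R\}$ lies over the set $\{c-R\le u(\cdot,t)\le c+R\}\subset\Omega_t$, which shrinks to $\partial\Omega_t$ as $c\to\infty$. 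Hence $\Gamma_t^c$ converges, locally in Hausdorff distance on $\R^{n+2}$, to the cylinder $C_t:=\partial^\mu\Omega_t\times\R$ as $c\to\infty$.

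The next step is to identify $C_t$ with a weak MCF issuing from $\partial A\times\R$. Standard closure results for weak MCF — either as Brakke flows (using the compactness of Brakke flows with locally bounded area) or as set-theoretic subsolutions in Ilmanen's sense — allow me to pass to the limit $c\to\infty$ and conclude that $C_t$ is a weak MCF with initial datum $\partial A\times\R$. The level set flow of $\partial A\times\R$ in $\R^{n+2}$ is, by the product structure, precisely $M_t\times\R$, where $M_t$ is the level set flow of $\partial A$ in $\R^{n+1}$; and non-fattening of $(M_t)$ lifts to non-fattening of $(M_t\times\R)$. Under non-fattening, every weak MCF from $\partial A\times\R$ agrees with the level set flow (Evans--Spruck, Chen--Giga--Goto, Ilmanen), which forces $C_t=M_t\times\R$ and therefore $\partial^\mu\Omega_t=M_t$.

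The hardest step, I expect, is the second one: rigorously justifying the passage to the limit $c\to\infty$ in a weak sense strong enough to apply uniqueness under non-fattening, while tracking the \emph{measure-theoretic} boundary $\partial^\mu\Omega_t$ rather than the topological $\partial\Omega_t$ (which may be strictly larger if $\Omega_t$ develops lower-dimensional degeneracies). One also needs both inclusions $\partial^\mu\Omega_t\subset M_t$ and $M_t\subset\partial^\mu\Omega_t$: the former falls out of the limit procedure above, whereas the latter is obtained by inserting $\Gamma_t^c$ as a barrier from the other side, using avoidance between smooth MCF and the level set flow in $\R^{n+2}$, and then invoking non-fattening to rule out an interior region of $M_t$ lying strictly between the inner and outer limits. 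Finally, verifying the initial condition $C_0=\partial A\times\R$ requires the continuity of $u$ up to $t=0$ together with a Hausdorff-continuity statement for $\Omega_t$ at $t=0$, both of which are afforded by Theorem~\ref{exist thm intro}.
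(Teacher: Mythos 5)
Your strategy captures two of the paper's key moves --- vertical translates of the graph, comparison with the cylinder --- but the weak-solution framework you propose for the limit step has genuine gaps that the paper's argument is designed to avoid.

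First, the local Hausdorff limit of $\Gamma_t^c=\graph u(\cdot,t)-c\,e_{n+2}$ as $c\to\infty$ is controlled by the topological boundary $\partial\Omega_t\times\R$, not by $\partial^\mu\Omega_t\times\R$; the slab argument shows the limit sits over the closure of $\{u(\cdot,t)\ge c-R\}$, which is a set touching $\partial\Omega_t$, and nothing in that computation sees the measure-theoretic boundary. So the identification $C_t=\partial^\mu\Omega_t\times\R$ that you want to extract from Hausdorff convergence is not available, and you flag this yourself without resolving it. Second, the Brakke-compactness route does not close: passing to a limit of smooth flows gives a Brakke flow (or an integral current flow), but Brakke flows with a given initial datum are \emph{not} unique, even under a non-fattening hypothesis on the level set flow. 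The cited uniqueness theorems (Evans--Spruck, Chen--Giga--Goto) give uniqueness of the viscosity solution to the level set PDE, not uniqueness within the class of Brakke flows; what is known is one-sided (Ilmanen: the level set flow ``contains'' a Brakke flow), and the reverse inclusion, which you need to conclude $C_t=M_t\times\R$, is exactly what requires the extra argument. Your proposal therefore asserts the conclusion at the step where the content should be.

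The paper sidesteps both problems by never leaving the level set PDE framework. It introduces three viscosity solutions $w$, $v$, $\tilde v$ of \eqref{weq} with initial data the (truncated signed) distance functions to $\graph u_0$, $\partial\Omega_0\times\R$, and $\partial\Omega_0$, with sublevel sets $E_t$, $C_t$, $\D_t$. The translates appear, but encoded as $w^R$, the level set solution with initial datum the distance to $\graph(u_0-R)$; the crucial tool is the \emph{monotone convergence theorem} for viscosity solutions (Theorem \ref{monotoneconv}), which gives $w^R(\cdot,t)\searrow v(\cdot,t)$ pointwise --- a much stronger and sharper statement than Hausdorff or Brakke convergence, and one for which uniqueness is built in. The inclusion you describe as ``falling out of the limit procedure,'' namely $\Omega_t\subset\D_t^\mu$, does \emph{not} fall out: it needs the separate Lemma \ref{graph E lem}, which shows $\graph u(\cdot,t)\subset\partial E_t^+$ by invoking the non-fattening theorem of Biton--Cardaliaguet--Ley for graphical initial data applied to the approximating solutions $u^L$, again combined with monotone convergence as $L\to\infty$. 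Only then, combined with $E_t^+\subset C_t^+=\D_t^+\times\R$ from the comparison principle, does one get $\Omega_t\subset\D_t^+$, and then $\D_t\subset\Omega_t$ from the $w^R$ argument. The sandwich $\D_t\subset\Omega_t\subset\D_t^\mu\subset\D_t^+$ together with non-fattening of $\D_t$ then yields $\partial^\mu\Omega_t=\partial^\mu\D_t$ cleanly, handling the measure-theoretic boundary without ever needing to take a Hausdorff limit of sets. To repair your proposal you would essentially have to replace the Brakke-flow compactness step with the viscosity-solution monotone convergence theorem, and to supply the analogue of Lemma \ref{graph E lem} for the inclusion you currently take for granted.
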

For the general version of our existence theorem see Theorem
\ref{exist thm}. Theorem \ref{mainlevelsettheo} is our main result
concerning the connection between the smooth graphical flow and the
weak flow (in the level set sense) of the projections.  In general, we
do not know whether the solutions $(\Omega,u)$ are level set
solutions. We notice, however, that such a statement would imply
uniqueness of $(\Omega,u)$ in Theorem \ref{exist thm}.

The previous theorems also provide a way to obtain a weak evolution of
a set $E\subset\R^{n+1}$ with $E=\partial A$ for some open set $A$:
Consider a function $u_0\colon A\to\R$ as described in Theorem
\ref{exist thm}, for example $u_0(x):=\frac1{\dist(x,\partial A)}
+|x|^2$, and apply our existence theorem. Then we define as the weak
evolution of $E$ the family $(\partial\Omega_t)_{t\ge0}$ with the
notation from above.

\subsection*{Illustrations}
We illustrate our main theorems by some figures. In the description,
we assume for the sake of simplicity that $\Omega_t=E_t$.

\begin{figure}[htb] 
 \includegraphics[height=5cm]{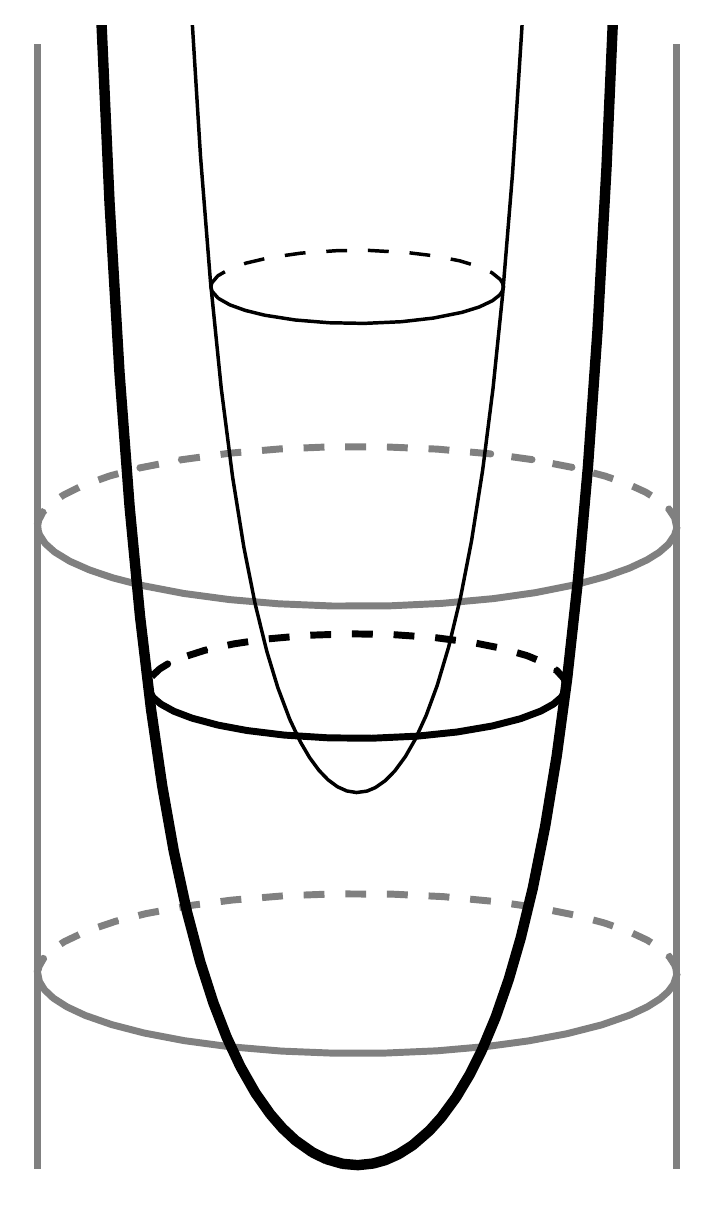}
 \caption{Graph over a ball}
 \label{bowl pic} 
\end{figure} 

In Figure \ref{bowl pic} we study the evolution of a graph over
$B_1(0)$ (drawn with thick lines), that is asymptotic to the cylinder
$\S^n\times\R$ (drawn with grey lines). The thinner lines indicate how
the graph looks at some later time. We remark that it continues to be
asymptotic to the evolving cylinder, which collapses in finite
time. As we prove in Theorem \ref{exist thm}, the evolving graph does
not become singular and it has to disappear to infinity at or before
the time the cylinder collapses. Theorem \ref{mainlevelsettheo}
implies that the evolving graph and the evolving cylinder disappear at
the same time. Notice that near the singular time, the lowest point
moves arbitrarily large distances in arbitrarily small time intervals.

\begin{figure}[htb] 
 \includegraphics[height=5cm]{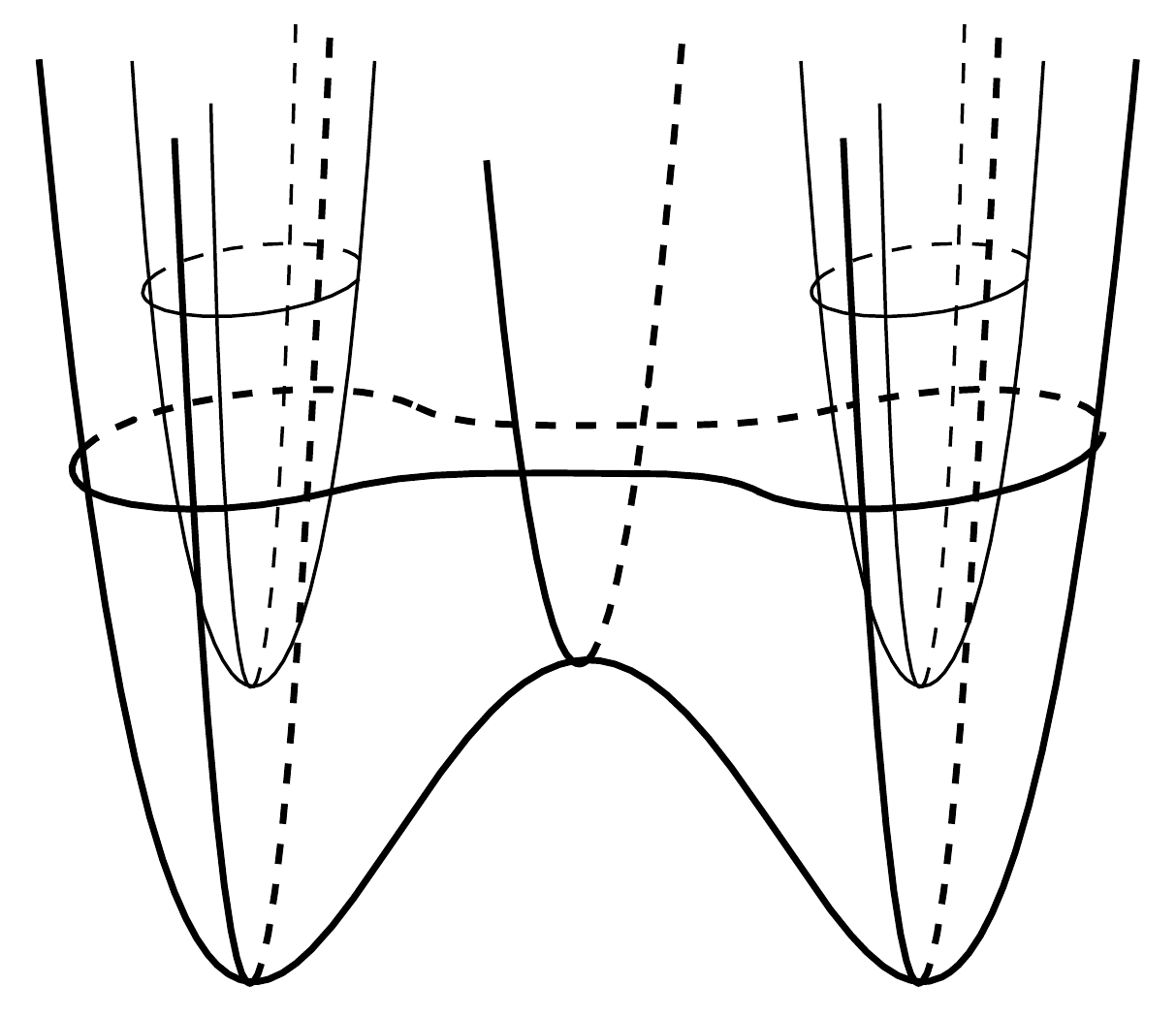}
 \caption{Graph over a set that develops a ``neck-pinch''}
 \label{neck pic} 
\end{figure} 

Figure \ref{neck pic} illustrates a graph over a set that develops a
``neck-pinch'' at $t=T$. This is projected to lower dimensions. For
$t\nearrow T$, the graph splits above the ``neck-pinch'' into two
disconnected components without becoming singular. The thinner lines
illustrate the graph for $t>T$. The rest of the evolution is similar
to the situation above.

\begin{figure}[htb] 
 \includegraphics[height=5cm]{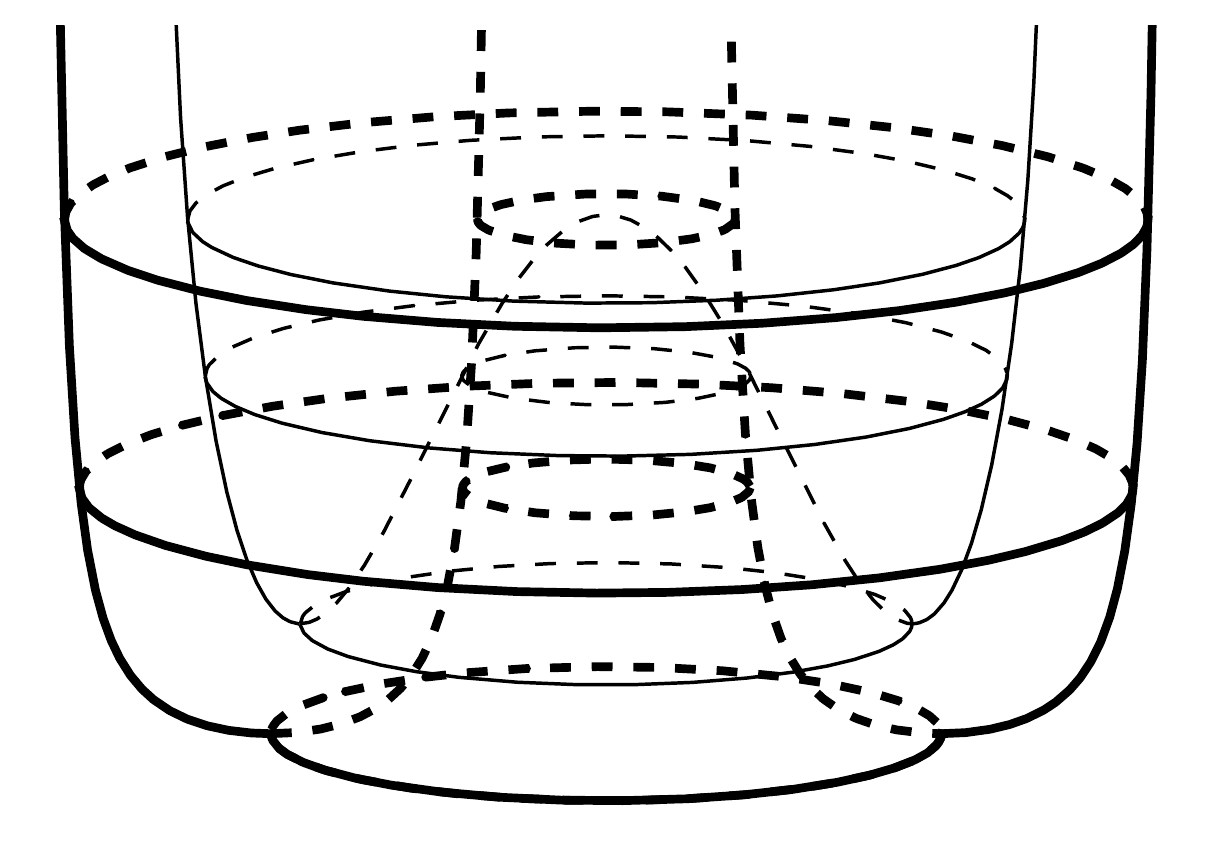}
 \caption{Graph defined initially over an annulus}
 \label{annulus pic} 
\end{figure} 

Next, we consider a rotationally symmetric graph over an annulus,
centered at the origin, see Figure \ref{annulus pic}. The inner
boundary of the annulus converges to a point as $t\nearrow T$. At
$t=T$ a ``cap at infinity'' is being added to the evolving graph. This
cap moves down very quickly. By comparison with compact solutions we
see that $u(0,t)$ is finite for any $t>T$. This is illustrated with
thin lines. Finally, once again the evolution becomes similar to the
evolution in Figure \ref{bowl pic}.

\begin{figure}[htb] 
 \includegraphics[height=5cm]{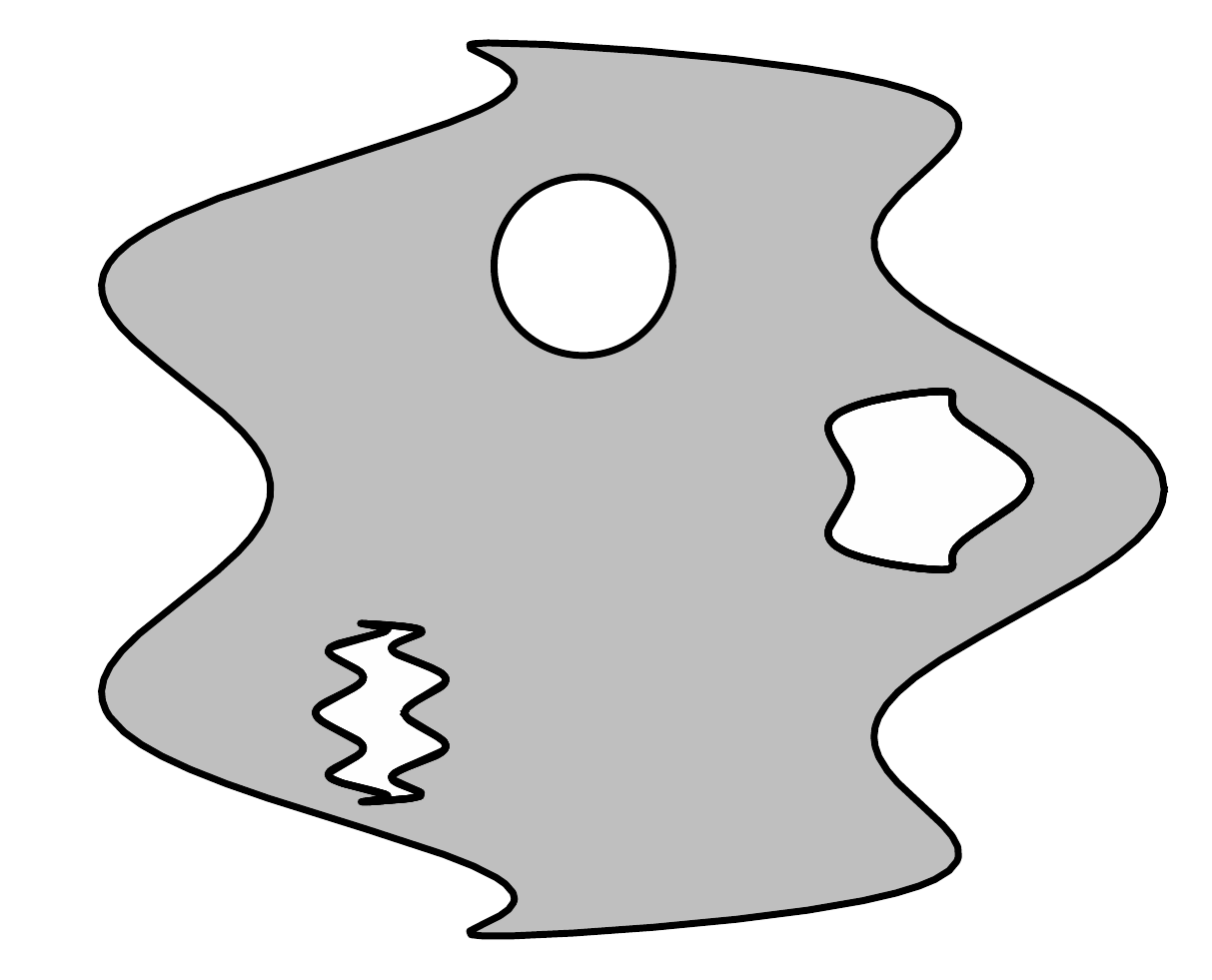}
 \caption{Domain with nontrivial topology}
 \label{cheese pic} 
\end{figure} 

Similarly, when a graph over a domain as in Figure \ref{cheese pic}
evolves, ``caps at infinity'' are being added at the times when the
small ``holes'' shrink to points.

\subsection*{Strategy of proof}
In order to prove existence of smooth solutions, we start by deriving
a priori estimates. The proof of these a priori estimates is based on
the observation that powers of the height function can be used to
localize derivative estimates in space. Then the result follows by
applying these estimates to approximate solutions and employing an
Arzel\`a-Ascoli-type theorem to pass to a limit. \par
The connection between singularity resolving and weak solutions is
obtained as follows: We observe that the cylinder
$(\partial\Omega_t\times\R)_t$ acts as an outer barrier for $\graph
u(\cdot,t)$. Furthermore, since $\graph u(\cdot,t)-R$ converges to the
cylinder as $R\to\infty$, we conclude that $\graph u(\cdot,t)$ does
not detach from the evolving cylinder near infinity.

\subsection*{Literature}
The existence of entire graphs evolving by mean curvature flow was
proved by K. Ecker and G. Huisken \cite{EckerHuiskenInvent} for
Lipschitz continuous initial data and by J. Clutterbuck
\cite{JulieAnisoMCF}, T. Colding and W. Minicozzi
\cite{ColdingMinicozzi} for continuous initial data. K. Ecker,
G. Huisken \cite{EckerHuiskenAnn} and N. Stavrou \cite{StavrouSelfSim}
have studied convergence to homothetically expanding solutions,
J. Clutterbuck, O. Schn\"urer, F. Schulze \cite{JCOSFSMCFStability}
and A. Hammerschmidt \cite{HammerschmidtPhD} have investigated
stability of entire solutions. \par
Many authors have worked on weak formulations for mean curvature flow,
e.\,g.{} K. Brakke \cite{brakke}, K. Ecker \cite{KEMCFBook},
L.\,{}C. Evans, J. Spruck
\cite{evanssprucklevelset1,evanssprucklevelset2,evanssprucklevelset3,evanssprucklevelset4},
Y. Chen, Y. Giga, S. Goto \cite{ChenGigaGotolevelset} and T. Ilmanen
\cite{TomMCF}. In what follows we will refer as {\it weak flow} to
level set solutions to mean curvature flow in the sense of Appendix
\ref{lsf appendix}, see also
\cite{ChenGigaGotolevelset,evanssprucklevelset1,JohnHeadPhD}.

Smooth solutions and one additional dimension have been used by
S. Altschuler, M. Grayson \cite{SteveThroughSing} for curves to
extend the evolution past singularities and by T. Ilmanen
\cite{TomEllReg} for the $\epsilon$-regularization of mean curvature
flow.\par
Several people have studied mean curvature flow after the first
singularity. We mention a few papers addressing this issue: J. Head
\cite{JohnHeadPhD} and J. Lauer \cite{LauerMCFSurgery} have shown that
an appropriate limit of mean curvature flows with surgery (see
G. Huisken and C. Sinestrari \cite{HuiskenSinestrari3} for the
definition of mean curvature flow with surgery) converges to a weak
solution. T. Colding and W. Minicozzi
\cite{ColdingMinicozziGenericMCF1} consider generic initial data that
develop only singularities that look spherical or cylindrical. In the
rotationally symmetric case, Y. Giga, Y. Seki and N. Umeda consider
mean curvature flow that changes topology at infinity
\cite{GigaOpenEnds,GigaQuenchingProfile}. \par
The height function has been used before in \cite{GT} to localize a
priori estimates for Monge-Amp\`ere equations.

\subsection*{Organization of the paper} The classical formulation
$\dot X=-H\nu$ of mean curvature flow does not allow for changes in
the topology of the evolving hypersurfaces. Hence in Section \ref{def
  sol sec} we introduce a notion of graphical mean curvature flow that
allows for changing domains of definition for the graph function and
hence also changes in the topology of the evolving submanifold.  \par
We fix our geometric notation in Section \ref{dg sec} and state
evolution equations of geometric quantities in Section \ref{evol eq
  sec}. \par
The key ingredients for proving smooth existence are the a priori
estimates in Section \ref{a priori sec} that use the height function
in order to localize the estimates. \par
In Section \ref{existence sec} we prove existence of smooth
solutions. That result follows from combining the H\"older estimates
of Section \ref{hoelder sec} and the compactness result that we prove
in Section \ref{compactness results sec} (a version of the Theorem of
Arzel\`a-Ascoli).  In Section \ref{levelset} we discuss the
relationship of our solution and the level set flow solution; we prove
Theorem \ref{mainlevelsettheo}. Finally, we include an appendix that
summarizes some of the results used in Section \ref{levelset}.

\subsection*{Open problems} 

We wish to mention a few open problems:
\begin{enumerate}
\item What is a good description of solutions disappearing at
  infinity?
\item If the projected solutions or a connected component of the
  complement become symmetric, e.\,g.{} spherical, does the graph pick
  up that symmetry?
\item What are optimal a priori estimates? 
\item Is the solution $(\Omega,u)$ unique? 
\item Does the level set solution of $\graph u_0$ fatten? Is this
  fattening related to that of the level set solution of $\partial A$?
\end{enumerate}

\subsection*{Acknowledgment}
We want to thank many colleagues for their interest in our work and
inspiring discussions: G. Bellettini, K. Ecker, G. Huisken,
T. Ilmanen, H. Koch, J. Metzger, F. Schulze, J. Spruck and B. White.
Some of these discussions were possible due to invitations to
Barcelona, Berlin, Oberwolfach and Potsdam.

\section{Definition of a solution}
\label{def sol sec}

\begin{definition}\label{mcf def}
  \neueZeile
  \begin{enumerate}[(i)]
  \item\label{mcf def i} \emph{Domain of definition:} Let
    $\Omega\subset\R^{n+1}\times[0,\infty)$ be a (relatively) open
    set. Set $\Omega_t:=\pi_{\R^{n+1}}\left(\Omega\cap
      \left(\R^{n+1}\times\{t\}\right)\right)$, where
    $\pi_{\R^{n+1}}\colon\R^{n+2}\to\R^{n+1}$ is the projection to the
    first $n+1$ components.  Notice here that the first $n+1$
    components of the domain $\Omega$ are spatial, while the
    last component can be understood as the time component.\par
    Observe that for each fixed $t$ the section
    $\Omega_t\subset\R^{n+1}$ is relatively open.
  \item\label{mcf def ii} \emph{The solution:} A function
    $u\colon\Omega\to\R$ is called a classical solution to graphical
    mean curvature flow in $\Omega$ with continuous initial value
    $u_0\colon\Omega_0\to\R$, if \[u\in C^{2;1}_{\text{loc}} (\Omega
    \setminus(\Omega_0\times\{0\})) \cap C^0_{\text{loc}} (\Omega)\]
    where we recall the definition of the spaces below and
    \begin{equation}
      \label{mcf eq}
      \tag{MCF}
      \begin{cases}
        \dot u=\sqrt{1+|Du|^2}\cdot\divergenz
        \left(\fracd{Du}{\sqrt{1+|Du|^2}}\right) &\text{in }\Omega
        \setminus(\Omega_0\times\{0\}),\\
        u(\cdot,0)=u_0&\text{in }\Omega_0.
      \end{cases}
    \end{equation}
  \item \emph{Maximality condition:} A function $u\colon\Omega\to\R$
    fulfills the maximality condition if $u\ge-c$ for some $c\in\R$
    and if $u|_{\Omega\cap\left(\R^{n+1}\times[0,T]\right)}$ is proper
    for every $T>0$. \par An initial value $u_0\colon\Omega_0\to\R$,
    $\Omega_0\subset\R^{n+1}$, is said to fulfill the maximality
    condition if $w\colon\Omega_0\times [0,\infty)\to\R$ defined by
    $w(x,t):=u_0(x)$ fulfills the maximality condition.
  \item \emph{Singularity resolving solution:} A function
    $u\colon\Omega\to\R$ is called a singularity resolving
    solution to mean curvature flow in dimension $n$ with initial
    value $u_0\colon\Omega_0\to\R$ if
    \begin{enumerate}[a)]
    \item $\Omega$ and $\Omega_0$ are as in \eqref{mcf def
        i},
    \item $u$ is a classical solution to graphical mean curvature flow
      with initial value $u_0$ as in \eqref{mcf def ii} and
    \item $u$ fulfills the maximality
      condition.
    \end{enumerate}
  \item We do not only call $u$ a singularity resolving solution but
    also the pair $(\Omega,u)$ and the family
    $(M_t)_{t\ge0}$ with $M_t=\graph u(\cdot,t)\subset\R^{n+2}$. 
  \end{enumerate}
\end{definition}

\begin{remark}
  \neueZeile
  \begin{enumerate}[(i)]
  \item Note that the domain of definition will depend on the
    solution.\par The dimensions seem to be artificially increased by
    one. This is due to the fact that we wish to study the evolution
    of $(\partial\Omega_t)_{t\ge0}$, which in the smooth case, see
    Remark \ref{lsf sing flow rem} \eqref{regular item}, is a family
    of $n$-dimensional hypersurfaces in $\R^{n+1}$ solving mean
    curvature flow.
  \item If $\Omega=\R^{n+1}$ then condition \eqref{mcf def ii} in
    Definition \ref{mcf def} coincides with the definition in
    \cite{EckerHuiskenInvent}.\par We avoid writing a solution as a
    family of embeddings $X\colon M\to\R^{n+2}$ as in general, the
    topology of $M$ is not fixed when $\Omega_t$ becomes
    singular. \par
    We expect similar results for other normal velocities, for
    example, if $u$ is a singularity resolving solution for the normal
    velocity $S_k$ in dimension $n$ then \[\dot u=\sqrt{1+|Du|^2}\cdot
    S_k[u]\quad\text{in }\Omega\setminus(\Omega_0\times\{0\}),\] where
    $S_k[u]$ denotes the $k$-th elementary symmetric function of the
    $n+1$ principal curvatures of graph $u(\cdot,t)\subset\R^{n+2}$
    and $\Omega$ is as in Definition \ref{mcf def} \eqref{mcf def i}.
  \item 
    \begin{enumerate}[a)]
    \item The maximality condition implies that $u$ tends to infinity
      if we approach a point in the relative boundary
      $\partial\Omega$. It also ensures that $u(x,t)$ tends to
      infinity as $|x|$ tends to infinity. \par
      Hence the maximality allows us to use the height function $u$
      for localizing our a priori estimates.
    \item Our maximality condition implies that each graph \[M_t
      =\graph u(\cdot,t)\subset\R^{n+2}\] is a complete
      submanifold. 
    \item If $u$ fulfills the maximality condition then
      $u_0(x):=u(x,0)$ also fulfills the maximality condition.
    \item The maximality condition prevents solutions from stopping or
      starting suddenly. Furthermore, in general restrictions of the
      domain $\Omega$ of a singularity resolving solution $(\Omega,u)$
      do not provide other singularity resolving solutions, i.\,e.{}
      for general open sets $B\subset\R^{n+1}\times[0,\infty)$, the
      pair $(\Omega\cap B,u|_B)$ does not fulfill the maximality
      condition. 
    \end{enumerate}
  \item It suffices to study classical solutions to mean curvature
    flow to obtain singularity resolving solutions. Nevertheless, this
    allows to obtain weak solutions starting with $\partial\Omega_0$
    by considering the projections of the evolving graphs.
  \end{enumerate}
\end{remark}

\section{Differential geometry of submanifolds}
\label{dg sec}

We use $X=X(x,\,t)=\left(X^\alpha\right)_{1\le\alpha\le n+2}$ to
denote the time-dependent embedding vector of a manifold $M^{n+1}$
into $\R^{n+2}$ and $\dt X=\dot X $ for its total time derivative.
Set $M_t:=X(M,\,t)\subset\R^{n+2}$. We will often identify an embedded
manifold with its image. We will assume that $X$ is smooth.  Assume
furthermore that $M^{n+1}$ is smooth, orientable, complete and
$\partial M^{n+1}=\emptyset$. We also use that notation if we have
that situation only locally, e.\,g.{} when the topology changes at
spatial infinity.\par
We choose $\nu=\nu(x)=\left(\nu^\alpha\right)_{1\le\alpha\le n+2}$ to
be the downward pointing unit normal vector to $M_t$ at $x$.  The
embedding $X(\cdot,\,t)$ induces at each point of $M_t$ a metric
$({g}_{ij})_{1\le i,\,j\le n+1}$ and a second fundamental form
$(h_{ij})_{1\le i,\,j\le n+1}$. Let $\left(g^{ij}\right)$ denote the
inverse of $(g_{ij})$. These tensors are symmetric and the principal
curvatures $(\lambda_i)_{1\le i\le n+1}$ are the eigenvalues of the
second fundamental form with respect to that metric. As usual,
eigenvalues are listed according to their multiplicity.\par
Latin indices range from $1$ to $n+1$ and refer to geometric
quantities on the surface, Greek indices range from $1$ to $n+2$ and
refer to components in the ambient space $\R^{n+2}$.  In $\R^{n+2}$,
we will always choose Euclidean coordinates with fixed $e_{n+2}$-axis.
We use the Einstein summation convention for repeated upper and lower
indices. Latin indices are raised and lowered with respect to the
induced metric or its inverse $\left(g^{ij}\right)$, while for Greek
indices we use the flat metric $(\ol
g_{\alpha\beta})_{1\le\alpha,\beta\le n+2}
=(\delta_{\alpha\beta})_{1\le\alpha,\beta\le n+2}$ of $\R^{n+2}$. \par
Denoting by $\langle\cdot,\,\cdot\rangle$ the Euclidean scalar product
in $\R^{n+1}$, we have $$g_{ij}=\left\langle
  X_{,\,i},\,X_{,\,j}\right\rangle
=X^\alpha_{,\,i}\delta_{\alpha\beta}X^\beta_{,\,j},$$ where we use
indices preceded by commas to denote partial derivatives. We write
indices preceded by semi-colons, e.\,g.\ $h_{ij;\,k}$ or $v_{;k}$, to
indicate covariant differentiation with respect to the induced metric.
Later, we will also drop the semi-colons and commas, if the meaning is
clear from the context. We set $X^\alpha_{;i}\equiv X^\alpha_{,i}$ and
\begin{equation}
  \label{eq:Xij}
  X^\alpha_{;\,ij}=X^\alpha_{,\,ij}-\Gamma^k_{ij}X^\alpha_{,\,k},
\end{equation}
where 
$$\Gamma^k_{ij}=\tfrac12g^{kl}(g_{il,\,j}+g_{jl,\,i}-g_{ij,\,l})$$
are the Christoffel symbols of the metric $(g_{ij})$.  So
$X^\alpha_{;ij}$ becomes a tensor. \par
The Gau{\ss} formula relates covariant derivatives of the position
vector to the second fundamental form and the normal vector
\begin{equation}\label{Gauss formula}
X^\alpha_{;\,ij}=-h_{ij}\nu^\alpha.
\end{equation}
\par 
The Weingarten equation allows to compute derivatives of the normal
vector
\begin{equation}\label{Weingarten equation}
\nu^\alpha_{;\,i}=h^k_iX^\alpha_{;\,k}.
\end{equation}
\par
We can use the Gau{\ss} formula \eqref{Gauss formula} or the
Weingarten equation \eqref{Weingarten equation} to compute the second
fundamental form. \par
Symmetric functions of the principal curvatures are well-defined, we
will use the mean curvature $H=\lambda_1+\ldots+\lambda_{n+1}$ and the
square of the norm of the second fundamental form
$|A|^2=\lambda_1^2+\ldots+\lambda_{n+1}^2$. \par
Our sign conventions imply that $H>0$ for the graph of a strictly
convex function.
\par
The space $C^{k,\alpha;k/2,\alpha/2}$ denotes the space of functions
for which up to $k$-th derivatives are continuous, where time
derivatives count twice, these derivatives are H\"older continuous
with exponent $\alpha$ in space and $\alpha/2$ in time and the
corresponding H\"older norm is finite. The space $C^k_{loc}(\Omega)$
consists of the functions $u\colon\Omega\to\R$ which are in $C^k(K)$
for every $K\Subset\Omega$. We use similar definitions for other
(H\"older) spaces.
\par
Finally, we use $c$ to denote universal, estimated constants. 

\section{Evolution equations for mean curvature flow}
\label{evol eq sec}

\begin{definition}
  If $M$ is given as an embedding and a graph, we use
  $\eta=(0,\ldots,0,1)$ to denote the vector $e_{n+2}$. The
  definitions of $\nu$, $H$ and $|A|^2$ are as introduced in the
  previous section. We denote the induced connection by $\nabla$
  and the associated Laplace-Beltrami operator by $\Delta$.
  
  We define $v=\left(-\eta_\alpha\nu^\alpha\right)^{-1}$ and
  $u=\eta_\alpha X^\alpha$. The function $u$ can be regarded as a
  function defined on a subset of $\R^{n+1}\times[0,\infty)$ or as a
  function defined on the evolving manifold $M$. It should be clear
  from the context which definition of $u$ is being used.
\end{definition}

\begin{theorem}\label{evol eq thm}
  Let $X$ be a solution to mean curvature flow. Then we have the
  following evolution equations. 
  \begin{align*}
    \heat u=&\,0,\umbruch\\
    \heat v=&\,-v|A|^2-\tfrac2v|\nabla v|^2,\umbruch\\
    \heat |A|^2=&\,-2|\nabla A|^2+2|A|^4,\umbruch\\
    \heat \left|\nabla^mA\right|^2
    \le&\,-2\left|\nabla^{m+1}A\right|^2\\
    &\,\quad +c(m,n)\cdot\sum\limits_{i+j+k=m}
    \left|\nabla^mA\right|\cdot \left|\nabla^iA\right|\cdot
    \left|\nabla^jA\right|\cdot \left|\nabla^kA\right|,\umbruch\\
    \heat \G\le&\,-2k\G^2 -2\phi v^{-3}\langle\nabla
    v,\nabla \G\rangle,
  \end{align*}
  where $\G=\phi|A|^2 \equiv\frac{v^2}{1-kv^2}|A|^2$ and $k>0$ is
  chosen so that $kv^2\le\frac12$ in the domain considered.
\end{theorem}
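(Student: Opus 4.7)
The plan is to establish the five evolution equations in the order stated, each resting on the previous ones, following the style of Ecker-Huisken \cite{EckerHuiskenInvent}. First, $\heat u = 0$ is immediate: under mean curvature flow $\dot X = -H\nu$, while contracting the Gau{\ss} formula \eqref{Gauss formula} gives $\Delta X^\alpha = g^{ij}X^\alpha_{;ij} = -H\nu^\alpha$, so the two contributions cancel after pairing with $\eta$. For $\heat v$, I start from $v = -(\eta_\alpha\nu^\alpha)^{-1}$; the Weingarten equation \eqref{Weingarten equation} yields $\nabla_i v = v^2 h_i^k X^\alpha_{;k}\eta_\alpha$. Differentiating once more, using Codazzi $h_{ij;k}=h_{ik;j}$ to symmetrize and then commuting covariant derivatives to bring in the Simons-type term, while computing $\dot v$ from the evolution $\dot\nu^\alpha = g^{ij}H_{;i}X^\alpha_{;j}$ (which follows from $|\nu|^2=1$ together with $\dot X=-H\nu$), produces the classical identity $\heat v = -v|A|^2 - (2/v)|\nabla v|^2$ after straightforward cancellation.

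For $\heat|A|^2$, I would combine the evolution $\partial_t h_{ij} = \nabla_i\nabla_j H - 2Hh_i^kh_{kj} + |A|^2 h_{ij}$ of the second fundamental form under $\dot X=-H\nu$ with Simons' identity $\Delta h_{ij} = \nabla_i\nabla_j H + Hh_i^kh_{kj} - |A|^2 h_{ij}$; tracing and integrating by parts gives $\heat |A|^2 = -2|\nabla A|^2 + 2|A|^4$. The inequality for $|\nabla^m A|^2$ follows by induction on $m$, using the commutator identities for $[\nabla_i,\nabla_j]$ acting on $(m,0)$-tensors whose curvature corrections are expressed through $A$ itself via Gau{\ss}; each covariant differentiation costs one factor of $|\nabla A|$ or $|A|$, giving the cubic sum on the right.

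The main work is the last inequality. I would apply the product rule for the heat operator to $\G=fg$ with $f=\phi(v)$ and $g=|A|^2$,
\begin{equation*}
  \heat(fg) = (\heat f)g + f\,\heat g - 2\langle\nabla f,\nabla g\rangle,
\end{equation*}
use the chain rule $\heat f = \phi'(v)\heat v - \phi''(v)|\nabla v|^2$, and substitute the formulas for $\heat v$ and $\heat|A|^2$. The quartic terms collect as $(2\phi-v\phi'(v))|A|^4$, and a direct computation with $\phi(v)=v^2/(1-kv^2)$ yields the key algebraic identity $2\phi - v\phi'(v) = -2k\phi^2$, giving exactly $-2k\G^2$. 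To reshape the remaining gradient terms into the claimed transport form, I would use $\nabla\G = \phi'(v)|A|^2\nabla v + \phi\nabla|A|^2$ to substitute for $\nabla|A|^2$ in the cross term $-2\phi'(v)\langle\nabla v,\nabla|A|^2\rangle$, producing $-C(v)\langle\nabla v,\nabla\G\rangle$ plus residual terms of the type $|\nabla v|^2|A|^2$ and $-2\phi|\nabla A|^2$. These residuals are absorbed using Kato's inequality $|\nabla|A|^2|^2\le 4|A|^2|\nabla A|^2$ together with the Weingarten-derived bound $|\nabla v|^2\le v^2(v^2-1)|A|^2$, with the assumption $kv^2\le \tfrac12$ keeping all coefficients of controlled size. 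The hardest step is the final bookkeeping: verifying that after the Kato/Young absorptions the coefficient of the transport term is exactly the one stated, which hinges delicately on the specific form of $\phi$ and the magnitude constraint on $kv^2$.
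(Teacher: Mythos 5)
Your outline for the first four identities follows the standard Ecker--Huisken computations, which is exactly what the paper does (its ``proof'' is only the citation to \cite{KEMCFBook,EckerHuiskenInvent}), and your key algebraic identity $2\phi-v\phi'=-2k\phi^2$ for $\phi(v)=\frac{v^2}{1-kv^2}$ is correct. The genuine gap is the last inequality, precisely the step you defer as ``final bookkeeping''. After the product and chain rules one has $\heat\G=-2k\G^2-2\phi|\nabla A|^2-\bigl(\tfrac{2\phi'}{v}+\phi''\bigr)|A|^2|\nabla v|^2-2\phi'\langle\nabla v,\nabla|A|^2\rangle$. If, as you describe, you substitute $\phi\nabla|A|^2=\nabla\G-\phi'|A|^2\nabla v$ into the \emph{whole} cross term, the transport coefficient comes out as $\tfrac{2\phi'}{\phi}=4\phi v^{-3}$, twice the stated one, and the residual is $\bigl(\tfrac{2(\phi')^2}{\phi}-\tfrac{2\phi'}{v}-\phi''\bigr)|A|^2|\nabla v|^2=\tfrac{2}{(1-kv^2)^2}|A|^2|\nabla v|^2$, which is positive and cannot be absorbed: Kato controls $|\nabla|A|^2|$ by $2|A|\,|\nabla A|$ but gives no bound of $|A|^2|\nabla v|^2$ by $|\nabla A|^2$, and the Weingarten bound $|\nabla v|^2\le v^2(v^2-1)|A|^2$ turns this residual into a quartic term of size comparable to $2\G^2$, which overwhelms $-2k\G^2$ since $2k\le v^{-2}\le1$. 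The missing idea (and what Ecker--Huisken actually do) is to split the cross term into two equal halves: convert only $-\phi'\langle\nabla v,\nabla|A|^2\rangle$, whose coefficient is $\tfrac{\phi'}{\phi}=2\phi v^{-3}$, exactly as stated in the theorem; estimate the other half by Kato and Young, $\phi'|\nabla v|\,|\nabla|A|^2|\le 2\phi|\nabla A|^2+\tfrac{(\phi')^2}{2\phi}|A|^2|\nabla v|^2$, which uses up all of $-2\phi|\nabla A|^2$; the total coefficient of $|A|^2|\nabla v|^2$ is then $\tfrac{3(\phi')^2}{2\phi}-\tfrac{2\phi'}{v}-\phi''=-\tfrac{2kv^2}{(1-kv^2)^3}\le0$, and the claimed inequality follows. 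Without this specific balancing the argument as written fails, so this is not mere bookkeeping but the decisive step.

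A smaller slip: your displayed evolution of the second fundamental form is not correct as stated. The direct computation under $\dot X=-H\nu$ gives $\partial_t h_{ij}=\nabla_i\nabla_jH-Hh_i^kh_{kj}$, which combined with Simons' identity yields $\partial_t h_{ij}=\Delta h_{ij}-2Hh_i^kh_{kj}+|A|^2h_{ij}$; your version mixes the two, and combining your two displays literally gives $\heat h_{ij}=-3Hh_i^kh_{kj}+2|A|^2h_{ij}$, which (together with $\partial_t g^{ij}=2Hh^{ij}$) does not reproduce $\heat|A|^2=-2|\nabla A|^2+2|A|^4$. With these two corrections your route is the intended one, namely the computations in the references the paper cites.
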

We remark that whenever we use evolution equations from this theorem,
we consider $u$ as a function defined on the evolving manifold.
\begin{proof}
  See \cite{KEMCFBook,EckerHuiskenInvent}.
\end{proof}

\section{A priori estimates}
\label{a priori sec}

The following assumption shall guarantee that we can prove local a
priori estimates for the part of $\graph u$ where $u<0$.  Notice that,
via considering the evolution given by $u-a$ (where $a$ is a constant
abbreviating the Spanish word ``altura''), this is equivalent to
obtain bounds in the set where $u<a$.

In this section we will consider the set $\hat\Omega=
\{u<0\}$. More precisely, we will work under the following
assumption:
\begin{assumption}\label{est ass}
  Let $\hat\Omega\subset\R^{n+1}\times[0,\infty)$ be an open set. Let
  $u\colon\hat\Omega\to\R$ be a smooth graphical solution to \[\dot
  u=\sqrt{1+|Du|^2} \cdot\divergenz
  \left(\frac{Du}{\sqrt{1+|Du|^2}}\right)\quad\text{in}
  \quad\hat\Omega\cap\left(\R^{n+1}\times(0,\infty)\right).\] Suppose
  that $u(x,t)\to0$ as $(x,t)\to(x_0,t_0)\in\partial\hat\Omega$.
  Assume that all derivatives of $u$ are uniformly bounded and can be
  extended continuously across the boundary for all domains
  $\hat\Omega\cap\left(\R^{n+1}\times[0,T]\right)$ and that these sets
  are bounded for any $T>0$.
\end{assumption}

\begin{remark}
  \neueZeile
  \begin{enumerate}[(i)]
  \item Assumption \ref{est ass} is fulfilled for smooth entire
    solutions $u$ to graphical mean curvature flow that fulfill $u\ge
    L\ge 1$ outside a compact set when we restrict $u$ to
    $\hat\Omega=\left\{(x,t)\in\R^{n+1} \times[0,\infty)\colon
      u(x,t)<0\right\}$.
  \item The approximate solutions $u^L_{\epsilon,R}$ in Lemma \ref{ex
      approx sol lem} fulfill Assumption \ref{est ass} for $L>0$.
  \item The following a priori estimates extend to the situation when
    \[\hat\Omega =\{(x,t)\colon u(x,t)<a\}\] for any $a\in\R$ instead of
    $0$. We only have to replace $u$ by $(u-a)$ below, e.\,g.{} in
    Theorem \ref{C1 est thm}.
  \item The boundedness assumption of the sets follows from the
    properness of the function $u$.
  \end{enumerate}
\end{remark}

\begin{theorem}[$C^1$-estimates]
  \label{C1 est thm}
  Let $u$ be as in Assumption \ref{est ass}. Then
  \[vu^2\le\max\limits_{\genfrac{}{}{0pt}{}{t=0}{\{u<0\}}}vu^2\]
  at points where $u<0$.
\end{theorem}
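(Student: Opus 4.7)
The plan is to set $f := vu^2$ (viewed as a function on the evolving manifold $M_t$) and derive a parabolic differential inequality for $f$, then apply the weak maximum principle. I would start by computing $\heat$ applied to $f$ via the product rule, using the two identities supplied by Theorem~\ref{evol eq thm}, namely $\heat u=0$ and $\heat v=-v|A|^2-\tfrac{2}{v}|\nabla v|^2$. A short calculation, combining $\heat(u^2)=2u\,\heat u-2|\nabla u|^2=-2|\nabla u|^2$ with the cross term in $\Delta(vu^2)$, gives
\[
  \heat f=-vu^2|A|^2-\tfrac{2u^2}{v}|\nabla v|^2-2v|\nabla u|^2-4u\langle\nabla v,\nabla u\rangle.
\]

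The second step is to absorb the indefinite cross term into a drift. Since $\nabla f=u^2\nabla v+2uv\nabla u$, one has $\langle\nabla f,\nabla v\rangle=u^2|\nabla v|^2+2uv\langle\nabla u,\nabla v\rangle$, hence
\[
  4u\langle\nabla v,\nabla u\rangle=\tfrac{2}{v}\langle\nabla f,\nabla v\rangle-\tfrac{2u^2}{v}|\nabla v|^2.
\]
Substituting, the two copies of $\tfrac{2u^2}{v}|\nabla v|^2$ cancel and one is left with the clean inequality
\[
  \heat f+\tfrac{2}{v}\langle\nabla v,\nabla f\rangle=-vu^2|A|^2-2v|\nabla u|^2\le 0.
\]

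The third step is a standard weak maximum principle argument. For each $T>0$ I would work on $Q_T:=\hat\Omega\cap(\R^{n+1}\times[0,T])$, which is bounded by Assumption~\ref{est ass} and to which $f$ extends continuously with $v$ bounded. On the lateral part of the parabolic boundary of $Q_T$ one has $u\to 0$ while $v$ stays bounded, so $f\to 0$; on the bottom slice $t=0$ the function $f$ equals $v u_0^2$. The drift coefficient $\tfrac{2}{v}\nabla v$ is bounded on $Q_T$ (since $v\ge 1$ and $\nabla v$ is bounded by assumption), so the weak parabolic maximum principle for the linear operator $\heat+\tfrac{2}{v}\langle\nabla v,\nabla\,\cdot\,\rangle$ applies and forces $\sup_{Q_T}f\le\max_{t=0,\,\{u<0\}}vu^2$. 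Since $T$ is arbitrary, the estimate holds on all of $\hat\Omega$.

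The main obstacle is the algebraic rearrangement in the second step. A naive computation of $\heat(vu^2)$ contains the indefinite cross term $-4u\langle\nabla v,\nabla u\rangle$, and one only obtains $\heat(vu^2)\le 0$ after imposing $\nabla(vu^2)=0$; this pointwise statement at critical points is too fragile for the time-dependent, noncylindrical domain $\hat\Omega$. The key observation is that $\nabla f$ itself encodes the cross term, with a leftover $|\nabla v|^2$ contribution that exactly cancels one of the bad terms coming from $\heat v$. This converts the critical-point identity into a global linear parabolic inequality, after which the maximum principle is entirely routine.
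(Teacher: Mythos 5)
Your proposal is correct and follows essentially the same route as the paper: the same test function $w=vu^2$, the same evolution equations $\heat u=0$ and $\heat v=-v|A|^2-\tfrac2v|\nabla v|^2$, the same computation of $\heat(vu^2)=u^2\heat v-2v|\nabla u|^2-4u\langle\nabla v,\nabla u\rangle$, and the maximum principle on the (bounded, noncylindrical) set $\{u<0\}$. The only difference is the treatment of the cross term. The paper absorbs it unconditionally by Cauchy--Schwarz/Young,
\[
-4u\langle\nabla v,\nabla u\rangle
=-4\left\langle\tfrac{u}{\sqrt v}\nabla v,\sqrt v\,\nabla u\right\rangle
\le \tfrac{2u^2}{v}|\nabla v|^2+2v|\nabla u|^2,
\]
which exactly consumes the two good gradient terms and yields $\heat w\le -u^2v|A|^2\le 0$ with no first-order term at all; you instead substitute $\langle\nabla w,\nabla v\rangle$ and obtain the exact identity $\heat w+\tfrac2v\langle\nabla v,\nabla w\rangle\le 0$, at the price of having to note that the drift $\tfrac2v\nabla v$ is bounded (which Assumption \ref{est ass} does give). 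Both are valid; but your closing remark that the ``naive'' computation only gives $\heat(vu^2)\le0$ after imposing $\nabla(vu^2)=0$ at a critical point is not accurate -- the Cauchy--Schwarz absorption used in the paper is pointwise and global, so no critical-point hypothesis and no drift are needed there.
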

Here and in what follows, it is often possible to increase the
exponent of $u$.
\begin{proof}
  According to Theorem \ref{evol eq thm}, $w:=vu^2$ fulfills
  \begin{align*}
    \dot w=&\,\dot vu^2+2vu\dot u,\umbruch\\
    w_i=&\,v_iu^2+2vuu_i,\umbruch\\
    w_{ij}=&\,v_{ij}u^2+2vuu_{ij} +2vu_iu_j
    +2u(v_iu_j+v_ju_i),\umbruch\\ 
    \heat w=&\,u^2\heat v
    -2v|\nabla u|^2-4u\langle\nabla v,\nabla u\rangle\\
    =&\,u^2\left(-v|A|^2-\tfrac2v|\nabla v|^2\right)
    -2v|\nabla u|^2 -4\left\langle\tfrac u{\sqrt v}\nabla v,\sqrt
      v\nabla u\right\rangle\umbruch\\
    \le&\,-u^2v|A|^2\le0.
  \end{align*}
  The estimate follows from the maximum principle applied to $w$ in
  the domain where $u<0$. 
\end{proof}

\begin{remark}
  If the reader prefers to consider a positive cut-off function
  $(-u)$, we recommend to rewrite Theorem \ref{C1 est thm} as an
  estimate for $v\cdot(-u)^2$.
\end{remark}

\begin{corollary}\label{C1 cor}
  Let $u$ be as in Assumption \ref{est ass}. Then
  \[v\le\max\limits_{\genfrac{}{}{0pt}{}{t=0}{\{u<0\}}}vu^2\] 
  at points where $u\le-1$. 
\end{corollary}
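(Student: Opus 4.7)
The plan is to deduce this immediately from Theorem \ref{C1 est thm}. That theorem already gives the pointwise bound
\[
vu^2\le M:=\max\limits_{\genfrac{}{}{0pt}{}{t=0}{\{u<0\}}}vu^2
\]
throughout the set $\{u<0\}$. So the only thing left to observe is that the cut-off factor $u^2$ is at least $1$ on the smaller set $\{u\le -1\}$, which lets us drop it from the left-hand side.

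More precisely, first I would note that by definition $v=(-\eta_\alpha\nu^\alpha)^{-1}=\sqrt{1+|Du|^2}>0$, so $v$ itself is strictly positive wherever the solution is graphical. Then at any point with $u\le -1$ we have $u^2\ge 1$, hence
\[
v\;\le\;v\cdot u^2\;\le\;M,
\]
where the first inequality uses $v>0$ and $u^2\ge1$, and the second is Theorem \ref{C1 est thm}. This is exactly the claimed bound.

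There is no real obstacle: the corollary is just the observation that once $vu^2$ is controlled on $\{u<0\}$, the weight $u^2$ can be discarded on the region $\{u\le -1\}$ where it is bounded below by $1$. The only point worth stressing is why one is allowed to restrict to $\{u\le -1\}\subset\{u<0\}$ so that Theorem \ref{C1 est thm} applies; this is immediate.
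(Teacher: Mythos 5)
Your argument is correct and is exactly the intended deduction: the paper gives no separate proof of Corollary \ref{C1 cor} precisely because it follows from Theorem \ref{C1 est thm} by noting $u^2\ge1$ (and $v>0$, indeed $v\ge1$) on $\{u\le-1\}\subset\{u<0\}$. Nothing is missing.
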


\begin{remark}
  Similar corollaries also hold for higher derivatives. We do not
  write them down explicitly. 
\end{remark}

\begin{remark}\label{nabla u v est rem}
  For later use, we estimate derivatives of $u$ and $v$,
  \begin{align*}
    |\nabla u|^2=&\,\eta_\alpha X^\alpha_ig^{ij}X^\beta_j\eta_\beta
    =\eta_\alpha\left(\delta^{\alpha\beta}
      -\nu^\alpha\nu^\beta\right)\eta_\beta =1-v^{-2}\le 1
    \intertext{and, according to \eqref{Weingarten equation},} |\nabla
    v|^2=&\,\left(\left(-\eta_\alpha\nu^\alpha\right)^{-1}\right)_i
    g^{ij}\left(\left(-\eta_\beta\nu^\beta\right)^{-1}\right)_j
    =v^4\eta_\alpha
    X^\alpha_kh^k_ig^{ij} h^l_jX^\beta_l\eta_\beta \le v^4|A|^2\\
    \le&\,v^2\phi|A|^2 =v^2\G.  \intertext{So we get} |\langle\nabla
    u,\nabla v\rangle| \le&\,|\nabla u|\cdot|\nabla v| \le v^2|A|\le
    v\sqrt \G.
  \end{align*}
\end{remark}

\begin{theorem}[$C^2$-estimates]
  \label{C2 est thm}
  Let $u$ be as in Assumption \ref{est ass}. 
  \begin{enumerate}[(i)]
  \item \label{only lip C2 est case}
    Then there exist $\lambda>0$, $c>0$ and $k>0$ (the constant in
    $\phi$ and implicitly in $\G$), depending on the $C^1$-estimates,
    such that
    \[tu^4\G+\lambda
    u^2v^2\le\sup\limits_{\genfrac{}{}{0pt}{}{t=0}{\{u<0\}}} \lambda
    u^2v^2 +ct\] at points where $u<0$ and $0<t\le 1$. 
  \item Moreover, if $u$ is in $C^2$ initially, we get $C^2$-estimates
    up to $t=0$: Then there exists $c>0$, depending only on the
    $C^1$-estimates, such
    that \[u^4\G\le\sup\limits_{\genfrac{}{}{0pt}{}{t=0}{\{u<0\}}}u^4\G
    +ct\] at points where $u<0$.
  \end{enumerate}
\end{theorem}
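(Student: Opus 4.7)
The plan is the parabolic maximum principle applied to auxiliary functions built from $\G$, $u$ and $v$, with $u^4$ playing the role of a cutoff that vanishes on the lateral boundary $\{u=0\}$. The key inputs are $\heat u = 0$ and $\heat\G\le -2k\G^2 - 2\phi v^{-3}\langle\nabla v,\nabla\G\rangle$ from Theorem~\ref{evol eq thm}; the pointwise bounds $|\nabla u|^2\le 1$, $|\nabla v|^2\le v^2\G$, $|\langle\nabla u,\nabla v\rangle|\le v\sqrt\G$ and $\phi v^{-2}\le 2$ (the latter from $kv^2\le\tfrac12$) collected in Remark~\ref{nabla u v est rem}; and the uniform bounds $|u|\le U$, $v\le V$ coming from Theorem~\ref{C1 est thm}, Corollary~\ref{C1 cor} and Assumption~\ref{est ass}.

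For part~(ii), where $u_0\in C^2$ makes $\sup_0 u^4\G$ finite, I consider $w:=u^4\G-\sup_{\{u<0\}\cap\{t=0\}}u^4\G-ct$, which is $\le 0$ on the parabolic boundary. At a first interior touching point $(x_0,t_0)$, $\nabla(u^4\G)=0$ and $\heat w\ge 0$. Expanding
\begin{equation*}
\heat(u^4\G) = u^4\heat\G - 12u^2|\nabla u|^2\G - 8u^3\langle\nabla u,\nabla\G\rangle,
\end{equation*}
substituting $\nabla\G = -(4\G/u)\nabla u$ from $\nabla(u^4\G)=0$, and applying the gradient bounds produces $\heat(u^4\G)\le -2ku^4\G^2 + 16|u|^3\G^{3/2} + 20u^2\G$. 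Young's inequality $16|u|^3\G^{3/2}\le ku^4\G^2 + (64/k)u^2\G$ reduces this to $-ku^4\G^2 + C_1 u^2\G$ with $C_1=64/k+20$, a quadratic in $s=u^2\G$ bounded by $C_1^2/(4k)$. Choosing $c>C_1^2/(4k)$ makes $\heat w<0$ at $(x_0,t_0)$, contradicting $\heat w\ge 0$.

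For part~(i), where $\sup_0 u^4\G$ is typically infinite for merely Lipschitz data, I pick
\begin{equation*}
w := tu^4\G + \lambda u^2 v^2 - \sup_{\{u<0\}\cap\{t=0\}}\lambda u^2 v^2 - ct,
\end{equation*}
the $t$-factor absorbing the initial blow-up of $\G$. A computation patterned on the proof of Theorem~\ref{C1 est thm} but applied to $u^2v^2$ gives $\heat(u^2v^2)\le 2v^2 - u^2\G$ after Cauchy-Schwarz on $-8uv\langle\nabla u,\nabla v\rangle$ and $v^2|A|^2\ge\G/2$. At an interior first-touching point with $t>0$, $\nabla w=0$ now reads $t\nabla(u^4\G) = -2\lambda(uv^2\nabla u + u^2v\nabla v)$. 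Substituting the resulting expression for $\nabla\G$ into the expansion of $\heat(u^4\G)$ yields the dominant $-2ku^4\G^2$, the interior part~(ii) contribution $\le -ku^4\G^2 + C_1 u^2\G$, and additional $\lambda$-weighted cross terms involving $|\nabla v|^2$, $\langle\nabla u,\nabla v\rangle$ and $v^2|\nabla u|^2$. Combined with the $u^4\G$ coming from $\partial_t(tu^4\G)$ and the $-\lambda u^2\G$ from $\lambda\heat(u^2v^2)$, Young's inequality absorbs the remaining $\G$- and $\sqrt\G$-terms into fractions of $-tku^4\G^2$ and $-\lambda u^2\G$, leaving an $O(\lambda v^2)+O(\lambda)$ residual. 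Choosing constants in the order $k$ small, then $\lambda\ge U^2$ and large relative to $V$, then $c$ large relative to the residual forces $\heat w<0$ at the touching point. The delicate point is that $\lambda\heat(u^2v^2)$ only supplies a negative multiple of $u^2\G$, not of $u^4\G^2$, so the $\lambda$-cross terms linear in $\G$ must first be trimmed against $-tku^4\G^2$ via Young's inequality; keeping the residual free of $1/t$ or $1/u^4$ blow-ups, and choosing the constants in the correct order, is the main technical task.
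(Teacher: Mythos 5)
Your part (ii) is correct and is essentially the paper's own argument (the paper's test function with the factor $t$ dropped and $\lambda=0$): at an interior touching point one substitutes $\nabla\G=-\tfrac{4\G}{u}\nabla u$, uses $|\langle\nabla u,\nabla v\rangle|\le v^2|A|\le v\sqrt\G$, $\phi\le 2v^2$ and Young's inequality, exactly as you do.

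In part (i) you also follow the paper's strategy (same test function $tu^4\G+\lambda u^2v^2$, same substitution for $\nabla\G$ from $\nabla w=0$), but one absorption step fails. By pre-absorbing the cross term $-8\lambda uv\langle\nabla u,\nabla v\rangle$ into the $|\nabla v|^2$-good term, your simplification $\heat(u^2v^2)\le 2v^2-u^2\G$ throws away the term $-6\lambda u^2|\nabla v|^2$ (only $-2\lambda u^2|\nabla v|^2$ survives your Cauchy--Schwarz, and you then drop it). However, substituting the $-2\lambda uv\nabla v$ part of $\nabla w=0$ into $-2\phi\, tu^4v^{-3}\langle\nabla v,\nabla\G\rangle$ produces the term $+4\lambda\phi u^2v^{-2}|\nabla v|^2\ge 4\lambda u^2|\nabla v|^2$, and this term cannot be absorbed in the way you claim: via $|\nabla v|^2\le v^2\G$ it is only bounded by $8\lambda u^2v^2\G$, which is a multiple $8v^2\ge 8$ of the sole $\G$-linear good term $-\lambda u^2\G$, so it cannot be trimmed against it; and Young against $-2ktu^4\G^2$ leaves a residual of order $\lambda^2v^4/(kt)$, which blows up as $t\to0$ and is incompatible with the claimed $O(\lambda v^2)+O(\lambda)$ residual and the $+ct$ conclusion. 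This is precisely the term the paper's bookkeeping is built around: it retains the full $-6\lambda u^2|\nabla v|^2$ (from $2\lambda u^2v\heat v$ plus the product-rule term), handles the cross term instead by $|\langle\nabla u,\nabla v\rangle|\le v^2|A|$ and Young against $\epsilon\lambda u^2v^2|A|^2+c(\epsilon,\lambda)$, and chooses $k$ with $kv^2\le\tfrac13$ so that $4\phi/v^2-6\le0$. With that correction your remaining absorptions (e.g. $u^4\G\le\lambda u^2v^2|A|^2$ for $\lambda\ge 2\sup u^2$, the $\G^{3/2}$-term, the $\langle\nabla u,\nabla v\rangle$- and $v^2|\nabla u|^2$-terms) go through and the argument closes; as written, part (i) has a genuine gap.
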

\begin{proof}
  In order to prove both parts simultaneously, we underline terms and
  factors that can be dropped everywhere. We get the first part if we
  consider the underlined terms and the second part if we drop those
  and set $\lambda=0$.\par 
  We set \[w:=\ul tu^4\G+\lambda u^2v^2\] and obtain
  \begin{align*}
    \dot w=&\,\ul{u^4 \G} +4\ul tu^3\G\dot u+\ul tu^4\dot \G +2\lambda
    v^2u\dot u +2\lambda u^2v\dot v,\umbruch\\
    w_i=&\,4\ul tu^3\G u_i +\ul tu^4\G_i +2\lambda v^2uu_i +2\lambda
    u^2vv_i,\umbruch\\
    w_{ij}=&\,4\ul tu^3\G u_{ij} +\ul tu^4\G_{ij} +2\lambda v^2uu_{ij}
    +2\lambda u^2vv_{ij} +12\ul tu^2\G u_iu_j\\ &\,+4\ul
    tu^3(\G_iu_j+\G_ju_i) +2\lambda v^2u_iu_j +2\lambda u^2v_iv_j
    +4\lambda vu(u_iv_j+u_jv_i),\umbruch\\
    \ul tu^3\nabla \G=&\,\frac1u\nabla w -4\ul tu^2\G\nabla u
    -2\lambda
    v^2\nabla u -2\lambda uv\nabla v,\umbruch\\
    \heat w\le&\,\ul{u^4\G} +\ul tu^4\left(-2k\G^2-2\phi
      v^{-3}\langle\nabla v,\nabla \G\rangle\right)
    +2\lambda u^2v\left(-v|A|^2 -\tfrac2v|\nabla
      v|^2\right)\\ &\,-12\ul tu^2\G|\nabla u|^2 -8\ul
    tu^3\langle\nabla \G,\nabla u\rangle -2\lambda v^2|\nabla u|^2
    -2\lambda u^2|\nabla v|^2\\ &\,-8\lambda uv\langle\nabla u,\nabla
    v\rangle.
  \end{align*}
  In the following, we will use the notation $\langle\nabla
  w,b\rangle$ for generic gradient terms for the test function
  $w$. The constants $c$ are allowed to depend on $\sup\{|u|\colon
  u<0\}$ (which does not exceed its initial value) and the
  $C^1$-estimates which are uniform as we may consider $v\cdot(u-1)^2$
  in Theorem \ref{C1 est thm}. In case \eqref{only lip C2 est case},
  it may also depend on an upper bound for $t$, but we assume that
  $0<t\le 1$. That is, we suppress dependence on already estimated
  quantities. \par We estimate the terms involving $\nabla \G$
  separately. Let $\epsilon>0$ be a constant. We fix its value
  blow. Using Remark \ref{nabla u v est rem} for estimating terms, we
  get
  \begin{align*}
    -2\phi\ul tu^4v^{-3}\langle\nabla v,\nabla \G\rangle
    =&\,-2\frac{\phi u}{v^3}\left\langle\nabla v,\frac1u\nabla w-4\ul
      tu^2\G\nabla u-2\lambda v^2\nabla u-2\lambda uv\nabla
      v\right\rangle\umbruch\\
    \le&\,\langle\nabla w,b\rangle +8\ul t\frac{\phi u^3}v\G|A|
    +4\lambda\phi |u|v|A| +4\frac{\lambda\phi u^2}{v^2} |\nabla
    v|^2\umbruch\\
    \le&\,\langle\nabla w,b\rangle +\epsilon\ul tu^4\G^2
    +\epsilon\lambda u^2v^2|A|^2 +\lambda u^2|\nabla v|^2\cdot
    4\frac{\phi}{v^2} +c(\epsilon,\lambda),\umbruch\\
    -8\ul tu^3\langle\nabla \G,\nabla u\rangle
    =&\,-8\left\langle\nabla u,\frac1u\nabla w -4\ul tu^2\G\nabla u
      -2\lambda v^2\nabla u
      -2\lambda uv\nabla v\right\rangle\umbruch\\
    \le&\,\langle\nabla w,b\rangle +32\ul tu^2\G+16\lambda
    v^2+16\lambda |u|v^3|A|\umbruch\\
    \le&\,\langle\nabla w,b\rangle +\epsilon\ul tu^4\G^2
    +\epsilon\lambda u^2v^2|A|^2 +c(\epsilon,\lambda).
  \end{align*}
  We obtain 
  \begin{align*}
    \heat w\le&\,\ul{u^4 \G} +\ul tu^4\G^2(-2k+2\epsilon)
    +\langle\nabla w,b\rangle\\ &\,+\lambda u^2v^2|A|^2(-2+3\epsilon)
    +\lambda u^2|\nabla v|^2\left(4\frac\phi{v^2}-6\right)
    +c(\epsilon,\lambda). 
  \end{align*}
  Let us assume that $k>0$ is chosen so small that $kv^2\le\frac13$ in
  $\{u<0\}$. This implies $\phi\le2v^2$.  We may assume that
  $\lambda\ge 2u^2$ in $\{u<0\}$ and get $u^4\G\le\frac12\lambda
  u^2\phi|A|^2 \le\lambda u^2v^2|A|^2$. We get \[4\frac\phi{v^2}-6
  =\frac4{1-kv^2} -6\le 0.\] Finally, fixing $\epsilon>0$ sufficiently
  small, we obtain \[\heat w\le\langle\nabla w,b\rangle +c.\] Now,
  both claims follow from the maximum principle.
\end{proof}

\begin{theorem}[$C^{m+2}$-estimates]
  \label{Ck est thm}
  Let $u$ be as in Assumption \ref{est
    ass}.
  \begin{enumerate}[(i)]
  \item There exists $\lambda>0$, depending on the
    $C^{m+1}$-estimates, such that     
    \[tu^2\left|\nabla^mA\right|^2
    +\lambda\left|\nabla^{m-1}A\right|^2 \le c\cdot\lambda\cdot
    t+\sup\limits_{\genfrac{}{}{0pt}{}{t=0}{\{u<0\}}}
    \lambda\left|\nabla^{m-1}A\right|^2 \] at points where $u<0$ and
    $0<t\le 1$.
  \item As in Theorem \ref{C2 est thm}, initial smoothness
    is preserved.
    \end{enumerate}
\end{theorem}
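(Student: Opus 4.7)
The plan is induction on $m$, using a test function modeled on the one from Theorem \ref{C2 est thm}. Assume the $C^{m+1}$-estimates already supply uniform bounds on $|\nabla^j A|^2$ for $0\le j\le m-1$ on the region under consideration. The test function of choice is
\[
w := tu^2|\nabla^m A|^2 + \lambda|\nabla^{m-1}A|^2,
\]
with $\lambda>0$ large, to be fixed from below. For the base case $m=1$ one reads the second summand as $\lambda|A|^2$, whose boundedness is provided by Theorem \ref{C2 est thm}.

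The first step is to compute $\heat w$. Because $\heat u=0$ gives $\dot u=\Delta u$, the time derivative of the $u^2$-factor exactly cancels a piece of the Laplacian cross-term coming from $\Delta(u^2|\nabla^m A|^2)$, leaving
\[
\heat w = u^2|\nabla^m A|^2 - 2t|\nabla u|^2|\nabla^m A|^2 - 4tu\langle\nabla u,\nabla|\nabla^m A|^2\rangle + tu^2\,\heat|\nabla^m A|^2 + \lambda\,\heat|\nabla^{m-1}A|^2.
\]
Theorem \ref{evol eq thm} then yields $\heat|\nabla^m A|^2\le -2|\nabla^{m+1}A|^2 + C_1|\nabla^m A|^2 + C_1$: the coefficient of $|\nabla^m A|^2$ comes from those summands in the cubic sum where one index equals $m$ (so the other two vanish and the factor is $|\nabla^m A|^2|A|^2$ with $|A|^2$ already bounded), and all remaining mixed terms are controlled by Young's inequality and the inductive hypothesis. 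Similarly $\heat|\nabla^{m-1}A|^2\le -2|\nabla^m A|^2 + C_2$, since in the corresponding cubic sum every factor has order $\le m-1$.

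The critical step is absorbing the cross term. Using $|\nabla u|\le 1$ from Remark \ref{nabla u v est rem}, Cauchy--Schwarz, and Young's inequality with a small $\epsilon>0$,
\[
-4tu\langle\nabla u,\nabla|\nabla^m A|^2\rangle \le \epsilon\, tu^2|\nabla^{m+1}A|^2 + \frac{C}{\epsilon}\, t|\nabla^m A|^2.
\]
For $\epsilon<1$ the $|\nabla^{m+1}A|^2$ contribution is dominated by the strongly negative $-2tu^2|\nabla^{m+1}A|^2$ supplied by $tu^2\heat|\nabla^m A|^2$. Choosing $\lambda$ large enough (depending on $\sup\{u^2:u<0\}$, on $C_1$, and on the inductive constants) makes the total coefficient of $|\nabla^m A|^2$ nonpositive, and one is left with
\[
\heat w\le c\lambda \qquad\text{on }\{u<0\}.
\]
The parabolic maximum principle on this domain then gives part (i), exactly as in Theorems \ref{C1 est thm} and \ref{C2 est thm}; note that the $tu^2|\nabla^m A|^2$ summand vanishes where $u=0$, so the lateral boundary contribution is controlled by the already-established lower-order estimates. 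Part (ii) is obtained by repeating the argument with the test function $u^2|\nabla^m A|^2 + \lambda|\nabla^{m-1}A|^2$, no longer weighted by $t$, for which the assumed initial $C^{m+2}$-regularity keeps the initial datum finite.

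The main obstacle is the cross term $-4tu\langle\nabla u,\nabla|\nabla^m A|^2\rangle$: this is precisely what dictates the weight $tu^2$ in front of $|\nabla^m A|^2$, since only this weight produces the $-tu^2|\nabla^{m+1}A|^2$ in $\heat w$ strong enough to swallow the $|\nabla^{m+1}A|^2$ residue generated by the cross term. The rest of the computation is inductive bookkeeping of the cubic error terms from Theorem \ref{evol eq thm}.
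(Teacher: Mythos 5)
Your proposal is correct and follows essentially the same route as the paper: the identical test function $w=tu^2\left|\nabla^mA\right|^2+\lambda\left|\nabla^{m-1}A\right|^2$, the same inductive reduction of the evolution inequalities for $\left|\nabla^mA\right|^2$ and $\left|\nabla^{m-1}A\right|^2$, the same absorption of the cross term $-4tu\langle\nabla u,\nabla\left|\nabla^mA\right|^2\rangle$ into $-2tu^2\left|\nabla^{m+1}A\right|^2$, the choice of large $\lambda$ to cancel the $\left|\nabla^mA\right|^2$ contributions, and the maximum principle, with part (ii) obtained by dropping the $t$-weight. The only cosmetic difference is your explicit $\epsilon$-Young splitting of the cross term, where the paper absorbs the full $tu^2\left|\nabla^{m+1}A\right|^2$ directly; this changes nothing.
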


\begin{remark}
  \neueZeile
  \begin{enumerate}[(i)]
  \item This implies a priori estimates for arbitrary derivatives and
    any $t>0$: It is known that estimates for $u$, $v$, $|A|$ and
    $\left|\nabla^m A\right|$, $1\le m\le M$, imply (spatial)
    $C^{M+2}$-estimates for the function that represents the evolving
    hypersurface as a graph. Using the equation, we can bound time
    derivatives.
  \item For estimates at time $t_0>1$, we can use the previous
    theorems with $t=0$ replaced by $t=t_0-1/2$.
  \item To control the $m$-th (spatial) derivative at time $t_0>0$, we
    can apply the result iteratively and control the $k$-th
    derivatives, $1\le k\le m$, at time $\frac{kt_0}m$.
  \item Theorem \ref{Ck est thm} implies smoothness for $t>0$. We do
    not expect, however, that the decay rates obtained for $\Amod{m}$
    are optimal near $t=0$.
  \end{enumerate}
\end{remark}

\begin{proof}[Proof of Theorem \ref{Ck est thm}]
  Once again, we underline terms and factors that can be dropped to
  obtain uniform estimates up to $t=0$. We define \[w:=\ul
  tu^2\left|\nabla^mA\right|^2 +\lambda\left|\nabla^{m-1}A\right|^2\]
  for a constant $\lambda>0$ to be fixed. We will assume that
  $\left|\nabla^kA\right|^2$ is already controlled for any $0\le k\le
  m-1$. Suppose that $0\le t\le 1$. The constant $c$ is allowed to
  depend on quantities that we have already controlled. Thus the
  evolution equation for $\Amod{m}$ in Theorem \ref{evol eq thm}
  becomes for $m\ge1$
  \begin{align*}
    \heat\Amod{m}\le&\,-2\Amod{m+1} +c\Amod{m} +c,\umbruch\\
    \heat\Amod{m-1}\le&\,-2\Amod{m} +c.
  \end{align*}
  We get
  \begin{align*}
    \dot w=&\,\underline{u^2\Amod{m}} +2\ul tu\dot u\Amod{m} +\ul
    tu^2\dt\Amod{m} +\lambda\dt\Amod{m-1},\umbruch\\
    w_i=&\,2\ul tuu_i\Amod{m} +\ul tu^2\left(\Amod{m}\right)_i
    +\lambda\left(\Amod{m-1}\right)_i,\umbruch\\
    w_{ij}=&\,2\ul tuu_{ij}\Amod{m} +\ul
    tu^2\left(\Amod{m}\right)_{ij}
    +\lambda\left(\Amod{m-1}\right)_{ij}\\
    &\, +2\ul tu_iu_j\Amod{m} +2\ul tu\left(u_i\left(\Amod{m}\right)_j
      +u_j\left(\Amod{m}\right)_i\right),\umbruch\\
    \heat w\le&\,\ul{u^2\Amod{m}} +\ul
    tu^2\left(-2\Amod{m+1}+c\Amod{m}+c\right)\\
    &\,+\lambda\left(-2\Amod{m}+c\right) -2\ul t|\nabla
    u|^2\Amod{m} -4\ul tu\left\langle\nabla u,
      \nabla\Amod{m}\right\rangle.
  \end{align*}
  We observe that 
  \[-4\ul tu\left\langle\nabla u, \nabla\Amod{m}\right\rangle \le \ul
  t\cdot |u|\cdot c\cdot\left|\nabla^{m+1}A\right|
  \cdot\left|\nabla^mA\right| \le\ul tu^2\Amod{m+1} +c\Amod{m}.\]
  Therefore we get
  \[\heat w\le\left(c-2\lambda\right)\Amod{m} +c(\lambda)\]
  and the result follows from the maximum principle. 
\end{proof}

\section{H\"older estimates in time}
\label{hoelder sec}

We will use the following H\"older estimates to prove maximality of a
limit of solutions. 
\begin{lemma}\label{hoelder t lem}
  Let $u\colon\R^{n+1}\times[0,\infty)\to\R$ be a graphical solution
  to mean curvature flow and $M\ge1$ such that \[|Du(x,t)|\le M\quad
  \text{for all $(x,t)$ where }\quad u(x,t)\le0.\] Fix any
  $x_0\in\R^{n+1}$ and $t_1,t_2\ge0$. If $u(x_0,t_1)\le-1$ or
  $u(x_0,t_2)\le-1$, then $|t_1-t_2|\ge\frac1{8(n+1)M^2}$ or
  \[\frac{|u(x_0,t_1)-u(x_0,t_2)|}{\sqrt{|t_1-t_2|}}
  \le\sqrt{2(n+1)}(M+1).\]   
\end{lemma}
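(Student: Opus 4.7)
The strategy is a shrinking-sphere barrier comparison that exploits the Lipschitz control at the time where $u(x_0,\cdot)$ dips to $\le -1$. Without loss of generality $t_1\le t_2$; set $\tau:=t_2-t_1$ and consider the main case $a:=u(x_0,t_1)\le -1$. The hypothesis $|Du|\le M$ on $\{u\le 0\}$ combined with $a\le -1$ propagates the Lipschitz estimate $|u(y,t_1)-a|\le M|y-x_0|$ throughout $B_{1/M}(x_0)\subset\{u(\cdot,t_1)\le 0\}$. In $\R^{n+2}$ this confines $\graph u(\cdot,t_1)$ over $B_{1/M}(x_0)$ between the two axial cones $z=a\pm M|y-x_0|$ through $X_0:=(x_0,a)$.

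For a parameter $R\in(0,1/M]$ I introduce barrier spheres $S^\pm$ of radius $R$ centered at $(x_0,a\pm R\sqrt{1+M^2})$. A short computation---maximize the linear functional $M|\vec u|-w$ over the unit sphere $|\vec u|^2+w^2=1$, whose maximum is $\sqrt{1+M^2}$---shows that the vertical offset $R\sqrt{1+M^2}$ is precisely what inscribes $S^\pm$ tangentially in the corresponding cone region. Hence $S^+$ lies above $\graph u(\cdot,t_1)$ and $S^-$ below it on the common projection $\overline{B_R(x_0)}\subset \overline{B_{1/M}(x_0)}$.

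Under mean curvature flow each $S^\pm$ evolves with its center fixed and its radius shrinking as $\rho(t)=\sqrt{R^2-2(n+1)(t-t_1)}$, remaining a smooth sphere as long as $\rho>0$. The avoidance principle for compact MCF evolutions against the complete graph preserves the sandwich, provided $R^2>2(n+1)\tau$. Evaluating the bottom of $S^+$ and the top of $S^-$ at $y=x_0$, $t=t_2$ yields
\[
|u(x_0,t_2)-u(x_0,t_1)|\le R\sqrt{1+M^2}-\sqrt{R^2-2(n+1)\tau}.
\]

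Minimizing the right-hand side in $R$ locates the minimizer $R_\ast=\sqrt{2(n+1)\tau(1+M^2)}/M$, at which the value equals $M\sqrt{2(n+1)\tau}$. Because $M\ge 1$ gives $1+M^2\le 2M^2$, the hypothesis $\tau<\tfrac{1}{8(n+1)M^2}$ forces $R_\ast\le 1/(M\sqrt 2)<1/M$, so the optimal sphere fits inside the Lipschitz-controlled ball and the estimate $|u(x_0,t_1)-u(x_0,t_2)|\le M\sqrt{2(n+1)\tau}\le\sqrt{2(n+1)}(M+1)\sqrt\tau$ follows. The main obstacle is the geometric inscribing step together with verifying that the optimal sphere radius fits inside the available Lipschitz ball exactly when $\tau$ is below the stated threshold; the asymmetric hypothesis in which only $u(x_0,t_2)\le -1$ holds is handled by locating an intermediate time $t^\ast\in(t_1,t_2]$ with $u(x_0,t^\ast)=-1$ via the intermediate value theorem and invoking the main argument from $t^\ast$.
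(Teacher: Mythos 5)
Your main case ($u(x_0,t_1)\le -1$) is essentially the paper's shrinking-sphere barrier argument, with a nice refinement: by choosing the optimal vertical offset $R\sqrt{1+M^2}$ and then optimizing over $R$, you obtain the tighter bound $M\sqrt{2(n+1)\tau}$, whereas the paper simply fixes $r=\sqrt{2(n+1)\tau}$ (so the barrier sphere collapses exactly at $t_2$) with the cruder offset $(M+1)r$ and gets $(M+1)\sqrt{2(n+1)\tau}$. One small caveat: your offset makes the barrier sphere tangent to the Lipschitz cone, so the avoidance step needs the strong maximum principle rather than a simple disjointness argument; the paper's strictly larger offset, $(M+1)r>r\sqrt{1+M^2}$, gives an initial gap and sidesteps this.

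The asymmetric case, however, has a genuine gap. Locating $t^\ast\in(t_1,t_2]$ with $u(x_0,t^\ast)=-1$ and ``invoking the main argument from $t^\ast$'' bounds $|u(x_0,t^\ast)-u(x_0,t_2)|$ by $M\sqrt{2(n+1)(t_2-t^\ast)}$, but tells you nothing about $u(x_0,t_1)$: a priori $u(x_0,t_1)$ could be enormous (there is no gradient bound where $u>0$), and then $|u(x_0,t_1)-u(x_0,t_2)|\ge u(x_0,t_1)+1$ is not controlled by $\sqrt{t_2-t_1}$. Ruling out exactly this steep drop is the content of the asymmetric case, and it cannot be achieved by only looking at times $\ge t^\ast$. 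The paper handles it by contradiction: assuming the H\"older quotient fails, one places a sphere of radius $r=\sqrt{2(n+1)(t_2-t_1)}$ centered at $(x_0,u(x_0,t_2)+\epsilon)$ and verifies that it lies strictly below $\graph u(\cdot,t_1)$ by means of a one-sided Lipschitz estimate (which still holds for the lower envelope of the graph even where $u(x_0,t_1)>0$, after truncating at $\min\{u(x_0,t_1),0\}$). This sphere shrinks to its center at time $t_2$, so by avoidance $u(x_0,t_2)+\epsilon\le u(x_0,t_2)$, a contradiction. You need an argument of this kind, which genuinely constrains $u(x_0,t_1)$ from below, rather than the $t^\ast$ reduction.
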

The previous lemma implies that $u$ is locally uniformly H\"older
continuous in time. Although Lemma \ref{hoelder sec} follows from the
bounds for $H$ provided by \cite[Theorem 3.1]{EckerHuiskenInvent}, we
include below an independent and more elementary proof which employs
spheres as barriers.
\begin{proof}
  We may assume that $t_1\le t_2$. 
  \begin{enumerate}[(i)]
  \item\label{max lem i} Assume first that $u(x_0,t_1)\le-1$. As
    $|Du(x,t)|\le M$ for $u(x,t)\le0$, we deduce for any
    $0<r\le\frac1M$ \[u(x_0,t_1)-Mr\le u(y,t_1)\le
    u(x_0,t_1)+Mr\quad\text{for all }y\in B^{n+1}_r(x_0).\] Hence the
    sphere $\partial B_r^{n+2}(x_0,u(x_0,t_1)+(M+1)r)$ lies above
    $\graph u(\cdot,t_1)$ and $\partial
    B_r^{n+2}(x_0,u(x_0,t_1)-(M+1)r)$ lies below $\graph
    u(\cdot,t_1)$. When the spheres evolve by mean curvature flow,
    their radii are given by \[r(t)=\sqrt{r^2-2(n+1)(t-t_1)}\] for
    $t_1\le t<t_1+\frac{r^2}{2(n+1)}$. Both spheres are compact
    solutions to mean curvature flow. Hence they are barriers for
    $\graph u(\cdot,t)$. In particular, we get \[u(x_0,t_1)-(M+1)r \le
    u\left(x_0,t_1+\frac{r^2}{2(n+1)}\right) \le u(x_0,t_1)+(M+1)r.\]
    Set $r:=\sqrt{2(n+1)(t_2-t_1)}$. We may assume
    $|t_1-t_2|\le\frac1{2(n+1)M^2}$. Hence $r\le\frac1M$ and the
    considerations above apply. We obtain
    \begin{align*}
    u(x_0,t_1)-(M+1)\sqrt{2(n+1)(t_2-t_1)} \le&\, u(x_0,t_2)\\ \le&\,
    u(x_0,t_1)+(M+1)\sqrt{2(n+1)(t_2-t_1)}.      
    \end{align*}
    Rearranging implies the H\"older continuity claimed above.
  \item Assume now that $u(x_0,t_2)\le-1$ and $u(x_0,t_1)>-1$. We
    argue by contradiction: Suppose that $t_2\ge t_1\ge
    t_2-\frac1{8(n+1)M^2}$ and
    \begin{equation}
      \label{hoelder violated eq}
      \frac{u(x_0,t_1)-u(x_0,t_2)}
      {\sqrt{t_2-t_1}}\ge\sqrt{2(n+1)}(M+1).      
    \end{equation}
    Set $r:=\sqrt{2(n+1)(t_2-t_1)}$. We claim that
    \begin{equation}
      \label{sphere below eq}
      \min\{u(x_0,t_1),0\}-Mr\ge u(x_0,t_2)+r.
    \end{equation}
    If $u(x_0,t_1)<0$, \eqref{sphere below eq} follows by rearranging
    \eqref{hoelder violated eq}. Otherwise, we have that
    \begin{align*}
      u(x_0,t_2)+(M+1)r\le&\,-1+(M+1)\sqrt{2(n+1)(t_2-t_1)}\\
      \le&\,-1+(M+1)\sqrt{\frac{2(n+1)}{8(n+1)M^2}}
      \le-1+\frac{M+1}{2M}\le0 
    \end{align*}
    as $M\ge1$. This proves claim \eqref{sphere below eq}. \par
    Now, using \eqref{sphere below eq}, we can proceed similarly as in
    \eqref{max lem i}: For some small $\epsilon>0$, the sphere
    $\partial B^{n+2}_r(x_0,u(x_0,t_2)+\epsilon)$ lies below $\graph
    u(\cdot,t_1)$ (for the positivity of $\epsilon$ consider in
    \eqref{sphere below eq} the terms $-Mr$ near the center and $+r$
    near the boundary). Under mean curvature flow, the sphere shrinks
    to a point as $t\nearrow t_2$ and stays below $\graph
    u(\cdot,t)$. We obtain $u(x_0,t_2)+\epsilon\le u(x_0,t_2)$, which
    is a contradiction. \qedhere
  \end{enumerate}
\end{proof}

\section{Compactness results}
\label{compactness results sec} 

\begin{lemma}
  \label{maximality lem}
  Let $\Omega\subset B\subset\R^{n+2}$ and consider a function
  $u\colon\Omega\to\R$. Assume that for each $a\in\R$ there exists
  $r(a)>0$ such that for each $x\in\Omega$ with $u(x)\le a$ we have
  $B_{r(a)}(x)\cap B\subset\Omega$. Then $\Omega$ is relatively open
  in $B$ and $u(x_k)\to\infty$ if $x_k\to x\in\partial\Omega$, where
  $\partial\Omega$ is the relative boundary of $\Omega$ in $B$.
\end{lemma}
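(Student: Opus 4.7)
The plan is to prove the two claims directly, with the second reducing to the first via a triangle-inequality argument.

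\textbf{Relative openness.} Fix any $x \in \Omega$. Since $u(x) \in \R$, I set $a := u(x)$ and apply the hypothesis to obtain $r(a) > 0$ with $B_{r(a)}(x) \cap B \subset \Omega$. This exhibits $x$ as a relative interior point of $\Omega$ in $B$, so $\Omega$ is relatively open in $B$. In particular, the relative boundary $\partial\Omega$ is disjoint from $\Omega$.

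\textbf{Blow-up at the boundary.} Let $x_k \in \Omega$ converge to some $x \in \partial\Omega$, and suppose for contradiction that $u(x_k) \not\to \infty$. Then there exist $a \in \R$ and a subsequence (which I again denote $x_k$) with $u(x_k) \le a$ for all $k$. By hypothesis there is $r := r(a) > 0$ such that $B_r(x_k) \cap B \subset \Omega$ for every $k$. Choose $k$ so large that $|x - x_k| < r/2$. Then for any $y \in B_{r/2}(x) \cap B$, the triangle inequality gives $|y - x_k| \le |y - x| + |x - x_k| < r$, so
\[
B_{r/2}(x) \cap B \;\subset\; B_r(x_k) \cap B \;\subset\; \Omega.
\]
Hence $x$ is a relative interior point of $\Omega$ in $B$, contradicting $x \in \partial\Omega$ together with the first part. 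Therefore $u(x_k) \to \infty$ along any sequence $x_k \in \Omega$ with $x_k \to x \in \partial\Omega$.

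\textbf{Main obstacle.} There is no genuinely difficult step: the proof is a routine application of the triangle inequality combined with the hypothesis. The only point requiring care is the convention governing the relative boundary, namely that the relative boundary of a relatively open set is disjoint from the set, which is what enables the final contradiction.
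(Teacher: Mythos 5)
Your proof is correct and follows essentially the same route as the paper: openness is immediate from the hypothesis, and the boundary blow-up is obtained by contradiction via the triangle inequality applied to the balls $B_{r(a)}(x_k)$. The only cosmetic difference is that the paper concludes directly that $x \in B_{r(a)}(x_k) \cap B \subset \Omega$, whereas you show the slightly stronger statement that a whole ball around $x$ lies in $\Omega$; both yield the same contradiction with $x \in \partial\Omega$.
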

\begin{proof}
  It is clear that $\Omega\subset B$ is relatively open. If $u$ were
  not tending to infinity near the boundary, we find $x_n\in\Omega$
  such that $x_n\to x\in\partial\Omega$ as $n\to\infty$ and $u(x_n)\le
  a$ for some $a\in\R$. Since $B_{r(a)}(x_n)\cap B\subset\Omega$, the
  triangle inequality implies $x\in B_{r(a)}(x_n)$ for $n$
  sufficiently large. This contradicts $x\in\partial\Omega$.
\end{proof}

\begin{remark}
  A continuous maximal graph is a closed set and -- if sufficiently
  smooth -- a complete manifold. 
\end{remark}

\begin{lemma}[Variation on the Theorem of Arzel\`a-Ascoli]
  \label{AA lem}
  Let $B\subset\R^{n+2}$ and $0<\alpha\le 1$. Let $u_i\colon
  B\to\R\cup\{\infty\}$ for $i\in\N$. Suppose that there exist
  strictly decreasing functions $r,\,-c\colon\R\to\R_+$ such that for
  each $x\in B$ and $i\ge i_0(a)$ with $u_i(x)\le a<\infty$ we
  have \[\frac{|u_i(x)-u_i(y)|}{|x-y|^\alpha}\le c(a)\quad\text{for
    all}\quad y\in\ol{B_{r(a)}(x)}\cap B.\] Then there exists a
  function $u\colon B\to\R\cup\{\infty\}$ such that a subsequence
  $(u_{i_k})_{k\in\N}$ converges to $u$ locally uniformly in
  $\Omega:=\{x\in B\colon u(x)<\infty\}$ and $u_{i_k}(x)\to\infty$ for
  $x\in B\setminus\Omega$.  Moreover, for each $x\in\Omega$ with
  $u(x)\le a$ we have $B_{r(a+1)}(x)\cap B\subset\Omega$
  and \[\frac{|u(x)-u(y)|}{|x-y|^\alpha}\le c(a+1)\quad\text{for
    all}\quad y\in\ol{B_{r(a+1)}(x)}\cap B.\]
\end{lemma}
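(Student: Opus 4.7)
The plan is a two-stage diagonal extraction combined with two verification steps, using the equicontinuity provided by the hypothesis to upgrade pointwise convergence on a countable dense set to locally uniform convergence on appropriate balls.

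Since $B\subset\R^{n+2}$ is separable, fix a countable dense subset $D=\{x_j\}_{j\in\N}\subset B$. As each $u_i(x_j)$ lies in the compact space $[-\infty,\infty]$, a Cantor diagonal argument produces a subsequence (still denoted $(u_{i_k})$) with $L_j:=\lim_{k\to\infty}u_{i_k}(x_j)\in[-\infty,\infty]$ for every $j$. Define
$$\Omega:=\bigcup_{j\colon L_j<\infty}\bigl(B_{r(L_j+1)}(x_j)\cap B\bigr),$$
which is open relative to $B$. For each $j$ with $L_j<\infty$, the bound $u_{i_k}(x_j)\le L_j+1$ holds for $k$ sufficiently large, so the hypothesis delivers $\alpha$-H\"older continuity with constant $c(L_j+1)$ together with a uniform upper bound for the $u_{i_k}$ on $\overline{B_{r(L_j+1)}(x_j)}\cap B$. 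Arzel\`a-Ascoli supplies a sub-subsequence converging uniformly on that set, and a diagonal extraction over the countable index set $\{j\colon L_j<\infty\}$ yields a single subsequence converging uniformly on each such ball. The limits agree on overlaps and glue to a continuous function $u\colon\Omega\to\R$; extending $u\equiv\infty$ on $B\setminus\Omega$ gives the candidate.

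Next I verify that $u_{i_k}(x)\to\infty$ for every $x\in B\setminus\Omega$. If not, some sub-subsequence $(u_{i_{k_l}})$ satisfies $u_{i_{k_l}}(x)\le a<\infty$ for all large $l$, and the hypothesis yields $u_{i_{k_l}}(y)\le a+c(a)r(a)^\alpha=:a'$ for every $y\in\overline{B_{r(a)}(x)}\cap B$. Since $r$ is strictly decreasing and $a'+1>a$, we have $r(a'+1)<r(a)$, so density of $D$ supplies some $x_j\in D\cap B_{r(a'+1)}(x)\subset B_{r(a)}(x)\cap B$. Then $u_{i_{k_l}}(x_j)\le a'$, whence $L_j\le a'<\infty$. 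Monotonicity of $r$ applied once more gives $|x-x_j|<r(a'+1)\le r(L_j+1)$, placing $x$ in $B_{r(L_j+1)}(x_j)\subset\Omega$, a contradiction. This confirms $\{x\in B\colon u(x)<\infty\}=\Omega$.

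Finally, for $x\in\Omega$ with $u(x)\le a$, the uniform convergence near $x$ forces $u_{i_k}(x)\le a+1$ for $k$ large, so the hypothesis gives $|u_{i_k}(x)-u_{i_k}(y)|\le c(a+1)|x-y|^\alpha$ on $\overline{B_{r(a+1)}(x)}\cap B$, together with a uniform upper bound for the $u_{i_k}$ there. The contrapositive of the previous paragraph forces every $y\in B_{r(a+1)}(x)\cap B$ to lie in $\Omega$, and the H\"older inequality passes to the pointwise limit. The main obstacle is orchestrating the two ``$+1$'' buffers so construction and verification interlock: defining $\Omega$ with balls of radius $r(L_j+1)$ rather than $r(L_j)$ is what allows the divergence argument to convert $u_{i_{k_l}}(x)\le a$ into $x\in\Omega$, by absorbing the spread $c(a)r(a)^\alpha$ into $a'$ and exploiting $r(L_j+1)\ge r(a'+1)$; the same slack reappears on the output side, so the limiting H\"older bound comes with constant $c(a+1)$ rather than $c(a)$.
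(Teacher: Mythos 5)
Your proof is correct and takes essentially the same route as the paper's: a countable dense set, a diagonal subsequence, and the H\"older hypothesis with ``$+1$'' level buffers used both to pass the estimate to the limit and to show $u_{i_k}\to\infty$ off $\Omega$; your ball-by-ball Arzel\`a--Ascoli extraction with a second diagonalization is only a cosmetic variant of the paper's procedure of defining $u$ on the dense set and extending by continuity via the H\"older bound. The one loose point is that on $\overline{B_{r(L_j+1)}(x_j)}\cap B$ the hypothesis gives H\"older control of $u_{i_k}$ only relative to the center $x_j$ (where the level is known to be at most $L_j+1$), so equicontinuity between arbitrary points of that ball, as Arzel\`a--Ascoli requires, needs one further application of the hypothesis at the raised level $L_j+1+c(L_j+1)\,r(L_j+1)^\alpha$ with its correspondingly smaller radius --- a one-line repair that does not affect the rest of the argument.
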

\begin{proof}
  We adapt the proof of the Theorem of Arzel\`a-Ascoli to our
  situation. Let $D:=\{x_l\colon l\in\N\}$ be dense in $B$.\par
  If $\liminf\limits_{i\to\infty}u_i(x_0)<\infty$, we choose a
  subsequence $(u_{i_k})_{k\in\N}$, such that
  $\lim\limits_{k\to\infty}u_{i_k}(x_0)
  =\liminf\limits_{i\to\infty}u_i(x_0)$. If
  $\liminf\limits_{i\to\infty}u_i(x_0)=\infty$, we do not need to pass
  to a subsequence. \par Proceed similarly with $x_1,x_2,\ldots$
  instead of $x_0$. We denote the diagonal sequence of this sequence
  of subsequences by $(\tilde u_i)_{i\in\N}$. Define
  $u(x_k):=\lim\limits_{i\to\infty}\tilde u_i(x_k)\in\R\cup\{\infty\}$
  for $k\in\N$. This limit exists by the construction of the
  subsequence $(\tilde u_i)_{i\in\N}$. By passing to the limit in the
  H\"older estimate for $\tilde u_i$, we obtain the claimed H\"older
  estimate with $a+\frac12$ for $u$ and $x=x_k,y=x_l$, $k,l\in\N$. Set
  $u(x):=\lim\limits_{k\to\infty}u(x_k)$ for $x\in B$, $x_k\in D$ and
  $x_k\to x$ as $k\to\infty$. The H\"older estimate ensures that $u$
  is well-defined and fulfills the claimed H\"older estimate with
  $a+1$. Set $\Omega:=\{x\in B\colon u(x)<\infty\}$. There, pointwise
  convergence and local H\"older estimates imply locally uniform
  convergence in $\Omega$.
\end{proof}

\begin{remark}
  \neueZeile
  \begin{enumerate}[(i)]
  \item This result extends to families of locally equicontinuous
    functions.
  \item Notice that the functions $u_i$ in the previous lemma are not
    necessarily finite on all of $B$. Hence the lemma can also be
    applied to functions $u_i$ which are not defined in all of $B$: It
    suffices to set $u_i:=+\infty$ outside its original domain of
    definition.
  \item Observe that the domain $\Omega$ obtained in Lemma \ref{AA
      lem} may be empty. However, for the existence result (Theorem
    \ref{exist thm}), the fact that $\Omega\ne\emptyset$ is ensured by
    the choice of initial condition for the approximating solutions
    and Lemma \ref{hoelder t lem}.
  \end{enumerate}
\end{remark}

\section{Existence}
\label{existence sec}
In this section we will use approximate solutions to prove existence
of a singularity resolving solution to mean curvature flow. 

We start by constructing a nice mollification of $\min\{\cdot,\cdot\}$. 
Choose a smooth monotone approximation $f$ of $\min\{\cdot,0\}$ such
that $f(x)=\min\{x,0\}$ for $|x|>1$ and set $\min_\epsilon\{a,b\}
:=\epsilon f\left(\frac1\epsilon(a-b)\right)+b$.\par We
will set $\min_\epsilon\{u(x),L\}:=L$ at $x$ if $u$ is not defined at
$x$.

\begin{lemma}[Existence of approximating solutions]
  \label{ex approx sol lem}
  Let $A\subset\R^{n+1}$ be an open set. Assume that $u_0\colon
  A\to\R$ is locally Lipschitz continuous and maximal.\par
  Let $L>0$, $R>0$ and $1\ge\epsilon>0$. Then there exists a smooth
  solution $u_{\epsilon,R}^L$ to
  \[
  \begin{cases}
    \dot u=\sqrt{1+|Du|^2}\cdot\divergenz
    \left(\dfrac{Du}{\sqrt{1+|Du|^2}}\right) &\text{in }
    B_R(0)\times[0,\infty),\\
    u=L &\text{on }\partial B_R(0)\times[0,\infty),\\
    u(\cdot,0)=\min_\epsilon\big\{u_{0,\epsilon}, L\big\} &\text{in
    }B_R(0),
  \end{cases}
  \]
  where $u_{0,\epsilon}$ is a standard mollification of $u_0$.  We
  always assume that $R\ge R_0(L,\epsilon)$ is so large that $L+1\le
  u_{0,\epsilon}$ on $\partial B_R(0)$.
\end{lemma}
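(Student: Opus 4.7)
The plan is to reduce this to a classical existence result for quasilinear uniformly parabolic Dirichlet problems and then extend the resulting solution to $[0, \infty)$ via a priori estimates. Written out, the equation has principal part $\bigl(\delta^{ij} - \tfrac{u^i u^j}{1 + |Du|^2}\bigr) u_{ij}$, whose smallest eigenvalue is $(1 + |Du|^2)^{-1}$, so the problem is uniformly parabolic as long as $|Du|$ stays bounded.

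First I would verify that the initial datum $\min_\epsilon\{u_{0,\epsilon}, L\}$ is smooth on $\overline{B_R(0)}$ and compatible with the Dirichlet data to all orders at the corner $\partial B_R(0) \times \{0\}$. The hypothesis $R \geq R_0(L, \epsilon)$ guarantees $u_{0,\epsilon} \geq L + 1$ on an open collar of $\partial B_R(0)$ inside $\overline{B_R(0)}$. Since $\epsilon \leq 1$ and $f(x) = 0$ for $x \geq 1$, the defining formula $\min_\epsilon\{a, b\} = \epsilon f((a-b)/\epsilon) + b$ collapses to $b = L$ on that collar, so the initial datum is smooth and identically $L$ near $\partial B_R(0)$. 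All its spatial derivatives vanish there, the right-hand side of the PDE applied to the constant $L$ is zero, and compatibility with the time-independent boundary condition $u \equiv L$ is automatic to all orders. Standard short-time existence theory for smooth quasilinear Dirichlet problems (for instance in the framework of Ladyzhenskaya-Solonnikov-Ural'tseva) then yields a classical solution $u^L_{\epsilon, R} \in C^\infty(\overline{B_R(0)} \times [0, T))$ on some maximal interval.

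To show $T = \infty$ I need time-uniform bounds on $|u|$ and $|Du|$. Translation invariance of mean curvature flow together with the parabolic maximum principle, applied against the stationary solutions $c \equiv L$ and $c \equiv \min u(\cdot, 0)$, gives $\min u(\cdot, 0) \leq u \leq L$. For the boundary gradient estimate, I would place shrinking spherical caps $\partial B^{n+2}_{\rho(t)}$ with $\rho(t)^2 = \rho^2 - 2(n+1) t$ as local upper and lower barriers near each point of $\partial B_R(0) \times [0, T)$; the sphere $\partial B_R(0)$ is uniformly mean convex, so radii can be chosen independently of $T$ to sandwich the graph locally and produce a time-independent bound on $|Du|$ along $\partial B_R(0)$. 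The interior gradient estimate of Ecker and Huisken then promotes this to a global bound on $|Du|$. Once $|u|$ and $|Du|$ are under control, the equation is uniformly parabolic with smooth coefficients; interior and boundary Schauder estimates yield $C^\infty$ bounds of every order on $[0, T)$, ruling out finite-time blow-up and forcing $T = \infty$.

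The main obstacle is making the boundary gradient estimate genuinely time-uniform, so that the maximal interval of existence is $[0, \infty)$ rather than finite. This rests on the facts that $\partial B_R(0)$ is mean convex and that the boundary datum is constant, both available here, which let the spherical-cap barrier argument proceed without any dependence on $T$. Everything else is a routine application of standard parabolic theory.
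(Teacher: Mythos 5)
Your short-time step is fine and matches the paper's: the truncation $\min_\epsilon\{u_{0,\epsilon},L\}$ is smooth and equals $L$ near $\partial B_R(0)$ by the choice $R\ge R_0(L,\epsilon)$, so the compatibility conditions hold to all orders and classical parabolic theory gives a smooth solution on a maximal interval. The gap is in the step you yourself flag as the main obstacle, the boundary gradient estimate. Shrinking spherical caps cannot produce a bound that is uniform in time (nor even locally uniform beyond a fixed initial interval): a sphere of radius $\rho$ disappears after time $\rho^2/(2(n+1))$, and $\rho$ is limited by the width of the collar on which the initial datum equals $L$ and by $L-\min u(\cdot,0)$. To re-insert a cap at a later time $t_0$ you would need to know that the graph still stays out of a definite region below height $L$ near $\partial B_R(0)$, which is exactly the boundary gradient control you are trying to prove; so the argument as described is circular and only yields an estimate on a short initial time interval. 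The standard (and correct) tool here is a \emph{static} barrier over a boundary collar, e.g.\ a subsolution of the form $L-\psi\bigl(\dist(x,\partial B_R(0))\bigr)$ (for the ball even $L-\lambda(R-|x|)$ works, since $\divergenz\bigl(Dw/\sqrt{1+|Dw|^2}\bigr)\ge0$ for such radial functions), chosen steep enough to lie below the initial datum; comparison then gives $0\le\partial u/\partial\nu\le\lambda$ on $\partial B_R(0)$ for all times. This is precisely where the mean convexity of the sphere and the constancy of the boundary data enter, and it is the content of the result of Huisken \cite{HuiskenBdry} that the paper simply cites for all-time existence; citing that theorem is the shortest repair.

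A second, smaller inaccuracy: the interior gradient estimate of Ecker--Huisken \cite{EckerHuiskenInvent} is local and degenerates as one approaches $\partial B_R(0)$, so it does not ``promote'' a boundary gradient bound to a global one. The standard route is the maximum principle for the gradient function $v=\sqrt{1+|Du|^2}$, which satisfies an evolution equation without positive zeroth-order terms, hence attains its maximum on the parabolic boundary; combined with the boundary and initial gradient bounds this gives the global bound, after which your uniform parabolicity and Schauder continuation argument goes through. Note also that for the statement of the lemma bounds that are merely locally uniform in time would suffice to rule out finite-time blow-up, but the spherical-cap construction does not even deliver these beyond its fixed lifespan, so the static-barrier (or citation of \cite{HuiskenBdry}) step is genuinely needed.
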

\begin{proof}
  The initial value problem for $u^L_{\epsilon,R}$ involves smooth
  data which fulfill the compatibility conditions of any order for
  this parabolic problem. Hence we obtain a smooth solution
  $u^L_{\epsilon,R}$ for some positive time interval. According to
  \cite{HuiskenBdry}, this solution exists for all positive times.
\end{proof}
Observe that the approximate solutions of Lemma \ref{ex approx sol
  lem} fulfill Assumption \ref{est ass}
with \[\hat\Omega=\left\{(x,t)\colon u_{\epsilon,R}^L<a\right\}\] and
$0$ there replaced by $a$ for any $a<L$.

\begin{theorem}[Existence]
  \label{exist thm}
  Let $A\subset\R^{n+1}$ be an open set. Assume that
  $u_0\colon A\to\R$ is maximal and locally Lipschitz
  continuous. 
  \par Then there exists $\Omega\subset\R^{n+1}\times[0,\infty)$
  such that $\Omega\cap\left(\R^{n+1}\times\{0\}\right)
  =A\times\{0\}$ and a (classical) singularity resolving
  solution $u\colon\Omega\to\R$ with initial value $u_0$.
\end{theorem}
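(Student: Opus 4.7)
The plan is to obtain $(\Omega,u)$ as a subsequential limit of the approximating solutions $u^L_{\epsilon,R}$ of Lemma \ref{ex approx sol lem}, feeding the a priori estimates of Section \ref{a priori sec} and the H\"older estimate of Lemma \ref{hoelder t lem} into the Arzel\`a--Ascoli variant of Lemma \ref{AA lem}. Concretely, I would pick sequences $\epsilon_i\searrow 0$, $L_i\nearrow\infty$ and $R_i\geq R_0(L_i,\epsilon_i)$ with $R_i\nearrow\infty$, set $u_i:=u^{L_i}_{\epsilon_i,R_i}$, and extend each $u_i$ to $B:=\R^{n+1}\times[0,\infty)$ by $+\infty$ outside its domain.

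For fixed $a\in\R$, once $L_i\geq a+1$ the sublevel set $\{u_i<a\}\subset B$ lies strictly inside $B_{R_i}(0)\times[0,\infty)$ and Assumption \ref{est ass} applies there with the threshold $0$ replaced by $a$. The uniformity in $i$ of the input at $t=0$ is ensured by maximality of $u_0$: the set $\{u_0\leq a\}$ is a compact subset of $A$ with positive distance to $\partial A$, on which the mollifications $u_{0,\epsilon_i}$ are uniformly Lipschitz, so $v\,(u-a-1)^2$ is bounded at $t=0$ on $\{u_i<a\}$ by a constant $C(a)$. Applying Theorems \ref{C1 est thm}, \ref{C2 est thm} and \ref{Ck est thm} (with $u$ replaced by $u-a-1$) to each $u_i$ on $\{u_i<a\}$ then yields uniform-in-$i$ bounds on $v$, $|A|$ and $|\nabla^m A|$, valid for $t\geq\tau>0$. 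Combined with Lemma \ref{hoelder t lem} this produces a joint space-time H\"older estimate on each sublevel set $\{u_i\leq a\}$ satisfying the hypotheses of Lemma \ref{AA lem}. Extracting a subsequence gives locally uniform convergence to $u\colon B\to\R\cup\{\infty\}$, and we set $\Omega:=\{u<\infty\}$. The higher-order estimates upgrade this to smooth convergence on compact subsets of $\Omega\setminus(\Omega_0\times\{0\})$, so $u$ solves \eqref{mcf eq} there. The identifications $\Omega_0=A$ and $u(\cdot,0)=u_0$ follow from $\min_{\epsilon_i}\{u_{0,\epsilon_i},L_i\}\to u_0$ locally uniformly on $A$ and $\to+\infty$ locally uniformly off $A$ (again by maximality of $u_0$), together with the time-H\"older continuity up to $t=0$.

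The delicate point is verifying that $u$ is itself maximal. A uniform lower bound $u\geq-c$ follows from the initial bound on $u_0$ and the maximum principle applied to the approximates (the Dirichlet value $L$ is positive and the mollification cost is bounded). For properness of $u|_{\Omega\cap(\R^{n+1}\times[0,T])}$, Lemma \ref{maximality lem} reduces the problem to exhibiting, for every $a\in\R$, a radius $\rho(a)>0$ such that $u(x,t)\leq a$ implies $B_{\rho(a)}(x,t)\cap B\subset\Omega$. This transfers from $u_i$ to $u$: at a point with $u(x,t)\leq a$ we have $u_i(x,t)\leq a+1$ for large $i$; the uniform $C^1$ bound from Corollary \ref{C1 cor} gives $|Du_i|\leq M(a)$ on $\{u_i\leq a+1\}$, and Lemma \ref{hoelder t lem} then forces $u_i\leq a+2$ on a spacetime ball of uniform radius, so $u\leq a+2<\infty$ there in the limit.

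The main obstacle is precisely to keep these radii uniform in $i$ and independent of the location of $(x,t)$ inside $\Omega$: one must avoid any dependence on the distance to the approximating boundary $\partial B_{R_i}(0)$. This is exactly what the spatial localization of the a priori estimates via powers of the height function $u$ (rather than compactly supported cut-offs) buys us in Section \ref{a priori sec}, and it is the conceptual reason why the singularity-resolving construction works at all. After possibly passing to monotone envelopes of $r(a)$ and $c(a)$ to meet the strict monotonicity in Lemma \ref{AA lem}, all hypotheses are satisfied and $(\Omega,u)$ is the desired singularity resolving solution.
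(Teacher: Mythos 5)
Your overall scheme---the approximating solutions of Lemma \ref{ex approx sol lem}, the height-localized estimates of Section \ref{a priori sec}, the time-H\"older estimate of Lemma \ref{hoelder t lem}, Lemma \ref{AA lem} to extract a limit, and interpolation to upgrade to smooth convergence and the identification $\Omega_0=A$, $u(\cdot,0)=u_0$---is essentially the paper's (the paper passes to the limit in two stages, first $\epsilon\to0$, $R\to\infty$ at fixed $L$ via \cite{EckerHuiskenInvent} and then $L\to\infty$; your diagonal sequence is an inessential variation). However, there is a genuine gap at the maximality step. Lemma \ref{maximality lem} only concludes that $\Omega$ is relatively open and that $u\to\infty$ at the relative boundary $\partial\Omega$; it does \emph{not} give properness of $u|_{\Omega\cap\left(\R^{n+1}\times[0,T]\right)}$. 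Properness additionally requires that the sublevel sets $\{u\le a\}\cap\left(\R^{n+1}\times[0,T]\right)$ be bounded, i.e.\ that $u(x,t)\to\infty$ as $|x|\to\infty$ locally uniformly in time, and this is not implied by your uniform-radius statement: for example $\Omega=\R^{n+1}\times[0,\infty)$ with $u$ bounded (an entire Ecker--Huisken solution with bounded initial data) satisfies the hypothesis of Lemma \ref{maximality lem} with any radius, yet is not proper. Since your approximations have \emph{bounded} initial data $\min_{\epsilon}\{u_{0,\epsilon},L\}$, nothing in your argument prevents the limit from remaining at finite height along a sequence $|x|\to\infty$ at some positive time; that would violate the maximality condition and also undermine the later comparison with the evolving cylinder (Lemma \ref{Dt in omegat}).

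The paper closes exactly this point with a barrier argument that your proposal omits: by properness of $u_0$, for every $r>0$ there is $d=d(L,r)$ such that the sphere $\partial B^{n+2}_r\left((x,L-r-1)\right)$ lies below $\graph\min\{u_0,L\}$ whenever $|x|\ge d$; since shrinking spheres are compact solutions, comparison gives $u^L(x,t)\ge L-2$ for $|x|\ge d$ and $0\le t\le\tfrac r{n+1}-\tfrac1{2(n+1)}$, hence for every $T>0$ a radius $d(L,T)$ with $u^{L'}(x,t)\ge L-2$ for all $L'\ge L$, $|x|\ge d(L,T)$, $0\le t\le T$. This lower bound survives the limit, so $u\ge L-2$ on $\{|x|\ge d(L,T)\}\times[0,T]$ for every $L$; combined with $u\ge-c$ and Lemma \ref{maximality lem} this yields bounded sublevel sets on time slabs and hence the maximality condition. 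Without an argument of this type (or some substitute controlling $u$ at spatial infinity uniformly in the approximation parameter), your construction produces a classical solution but does not establish that $(\Omega,u)$ is a singularity resolving solution.
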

\begin{proof}
  Consider the approximate solutions $u^L_{\epsilon,R}$ given by Lemma
  \ref{ex approx sol lem}.  The a priori estimates of Theorem \ref{C1
    est thm} and Lemma \ref{hoelder t lem} apply to this situation in
  $\big\{(x,t)\in B_R(0)\times[0,\infty)\colon
  u^L_{\epsilon,R}(x,t)\le L-1\big\}$. According to
  \cite{EckerHuiskenInvent}, we get $u^L_{1/i,i}\to u^L$ as
  $i\to\infty$ and $u^L$ is a solution to mean curvature flow with
  initial condition $\min\{u,L\}$.\par
  Let us derive lower bounds for $u^L$ that will ensure maximality of
  the limit when $L\to\infty$.  As the initial value $u_0$ fulfills
  the maximality condition for every $r>0$ we can find $d=d(r)$ such
  that $B_r((x,L-r-1))$ lies below $\graph\min\{u,L\}$ if $|x|\ge
  d$. Hence $u^L(x,t)\ge L-2$ for $0\le t\le\frac1{n+1}
  r-\frac1{2(n+1)}$ if $|x|\ge d$. Therefore for any $T>0$ there
  exists $d\ge0$ such that $u^L(x,t)\ge L-2$ for $|x|\ge d$ and $0\le
  t\le T$. \par
  The estimates of Theorem \ref{C1 est thm}, Theorem \ref{C2 est thm}
  and Theorem \ref{Ck est thm} survive the limiting process and
  continue to hold for $u^L$: We get locally uniform estimates on
  arbitrary derivatives of $u^L$ in compact subsets of
  $\Omega\cap\left(\R^{n+1}\times(0,\infty)\right)$.  The estimate of
  Lemma \ref{hoelder t lem} also survives the limiting process and we
  get uniform bounds for $\left\Vert u^L\right\Vert_{C^{0,1;0,1/2}}$
  in compact subsets of $\Omega$.\par
  Now we apply Lemma \ref{AA lem} to $u^L$, $L\in\N$, and get a
  solution $(\Omega,u)$ and a subsequence of $u^L$, which we assume to
  be $u^L$ itself, such that $u^L\to u$ locally uniformly in
  $\Omega$.\par
  According to Lemma \ref{maximality lem}, $\Omega$ is open in
  $\R^{n+1}\times[0,\infty)$.  The $C^{0,1;0,1/2}$-estimates imply
  that the domains of definition of $u_0$ and $u|_{t=0}$ coincide. In
  particular in Definition \ref{mcf def} we get $A=\Omega_0(\Omega)$
  and $u(\cdot,0)=u_0$.
  \par
  The derivative estimates and local interpolation inequalities of the
  form \[\Vert Dw\Vert^2_{C^0(B)}\le c(n,B)\cdot\Vert
  w\Vert_{C^0(B)}\cdot \Vert w\Vert_{C^2(B)}\] for any $w\in C^2$ and
  any ball $B$ (see e.\,g.{} \cite[Lemma A.5]{OSFSMShyp}) imply that
  $u^L\to u$ smoothly in
  $\Omega\cap\left(\R^{n+1}\times(0,\infty)\right)$. Hence $u$
  fulfills the differential equation for graphical mean curvature
  flow.\par The lower bound $u^L(x,t)\ge L-2$ above for $|x|\ge d$ and
  Lemma \ref{maximality lem} imply maximality.
  \par Hence, we obtain the existence of a singularity resolving
  solution $(\Omega,u)$ for each maximal Lipschitz continuous function
  $u_0\colon A\to\R$.
\end{proof}
\begin{remark}
Notice that in the proof of Theorem \ref{exist thm} we started with the approximate solutions of Lemma \ref{ex
  approx sol lem} instead of $u^L$ in the proof of Theorem \ref{exist
  thm} as the former are smooth up to $t=0$ and allow to apply our a
priori estimates.  
\end{remark}
\section{The level set flow and singularity resolving solutions}
\label{levelset}

In this section we explore the relation between level set solutions as
defined at the beginning of Appendix \ref{lsf appendix} 
and singularity resolving solutions given by Theorem \ref{exist
  thm}. More precisely, we prove the following result
\begin{theorem} 
  \label{mainlevelsettheo}
  Let $(\Omega,u)$ be a solution to mean curvature flow as in Theorem
  \ref{exist thm}. Let $\partial\D_t$ be the level set evolution of
  $\partial\Omega_0$ as defined below. If $\partial\D_t$ does not
  fatten, the measure theoretic boundaries of $\Omega_t$ and $\D_t$
  coincide for every $t\ge0$:
  $\partial^\mu\Omega_t= \partial^\mu\D_t$.
\end{theorem}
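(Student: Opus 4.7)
The plan is to view both $\Omega_t$ and $\D_t$ as the inside regions of a single evolving cylinder in $\R^{n+2}$, using the smooth graphical flow $\graph u(\cdot,t)$ and its vertical translates as the bridge between them. This is a rigorous implementation of the heuristic sketched in the introduction: the cylinder over the evolving projection acts as an outer barrier, and translates of $\graph u$ collapse onto it as $R\to\infty$.

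First, I would identify the level set evolution of $\partial A \times \R \subset \R^{n+2}$ with $\partial \D_t \times \R$. If $\phi_0\colon \R^{n+1}\to\R$ is a level set representation of $\partial A$ whose level set evolution $\phi$ satisfies $\{\phi(\cdot,t)=0\} = \partial \D_t$, then $\tilde\phi_0(x,y) := \phi_0(x)$ represents $\partial A \times \R$, and its evolution is $(x,y,t)\mapsto\phi(x,t)$ by translation invariance in $e_{n+2}$. Non-fattening of $\partial \D_t$ in $\R^{n+1}$ transfers to non-fattening of the cylinder in $\R^{n+2}$, and the corresponding level set flow equals $\partial \D_t \times \R$.

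Next I would prove $\Omega_t \subseteq \ol{\D_t}$ by showing $\graph u(\cdot,t) \subseteq \ol{\D_t} \times \R$ for all $t\ge 0$. Initially $\graph u_0 \subseteq A \times \R$, which lies in the ``inside'' component of $\R^{n+2}\setminus(\partial A \times \R)$. The avoidance principle between smooth mean curvature flow and level set flow gives the inclusion at later times; the noncompactness of $\graph u(\cdot,t)$ is handled by exhausting it with the compact truncations $\{u\le K\} \cap \{t\le T\}$, which are compact by the maximality condition, and by sending $K\to\infty$. Projecting to $\R^{n+1}$ yields $\Omega_t \subseteq \ol{\D_t}$. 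For the reverse inclusion, I would exploit the vertical translates $M_t^R := \graph u(\cdot,t) - R e_{n+2}$, which are again smooth mean curvature flows with the same projection $\Omega_t$. Because $u(\cdot,t)\to\infty$ on $\partial \Omega_t$ by maximality, the bounded-height slices of $M_t^R$ collapse locally in Hausdorff distance onto $\partial \Omega_t \times \R$ as $R\to\infty$; in particular $M_0^R \to \partial A \times \R$ in the same sense. Continuity of the level set flow under Hausdorff convergence of initial data, which holds precisely because of non-fattening, then forces the Hausdorff limit of $M_t^R$ to be the level set flow of $\partial A \times \R$, namely $\partial \D_t \times \R$. Matching the two descriptions of the same limit gives $\partial \Omega_t = \partial \D_t$, which combined with the previous inclusion upgrades to $\partial^\mu \Omega_t = \partial^\mu \D_t$.

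The main obstacle is the convergence step in the third paragraph: turning the intuition that smooth translates of the graph collapse onto the cylinder into an identification with the level set flow, rather than with some proper sub-flow of it. This is where non-fattening is indispensable, together with an Ilmanen-type continuity result for weak mean curvature flow under Hausdorff-approximation of the initial data. The noncompact avoidance argument in the second step is delicate but more routine, as one only needs to control the truncations $\{u\le K\}$ on compact time intervals, where maximality directly supplies the needed compactness.
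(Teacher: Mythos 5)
Your third step contains the essential gap. The ``matching of the two descriptions of the same limit'' silently assumes that the translated smooth flows $M^R_t=\graph\left(u(\cdot,t)-R\right)$ coincide, at least asymptotically as $R\to\infty$, with the level set flows of their initial data $M^R_0=\graph(u_0-R)$. The only direction that comes for free is $M^R_t\subset$ (level set flow of $M^R_0$), via maximality of the level set flow; that inclusion alone cannot rule out the scenario you must exclude, namely that $\Omega_t$ is strictly smaller than $\D_t$ (for instance the graph escaping to infinity early while $\D_t$ persists), because then the Hausdorff limit of the $M^R_t$ would simply be smaller than $\partial\D_t\times\R$ with no contradiction. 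The ``continuity of the level set flow under Hausdorff convergence of initial data'' you invoke is a statement about the level set flows of the approximating sets, not about arbitrary smooth flows emanating from them; as a principle about smooth flows it is false even when the limit does not fatten: long, thin capsule curves converge locally in Hausdorff distance to a straight line (whose level set flow is static and non-fattening), yet they can be arranged to vanish in arbitrarily short time. What is needed here, and what constitutes the technical core of the paper (Lemma \ref{graph E lem} and Corollary \ref{x outside Omega pos cor}), is the one-sided statement that the level-set function $w^R$ of $\graph(u_0-R)$ stays nonnegative below the translated graph and over every point outside $\Omega_t$; this is proved by going through the Dirichlet approximations $u^L$, the non-fattening theorem of \cite{BitonNonfattening} for Lipschitz graphs, and monotone convergence of viscosity solutions ($w^L\nearrow w$ and $w^R\searrow v$), after which the contradiction at a point of $\D_t\setminus\Omega_t$ is immediate (Lemma \ref{Dt in omegat}). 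Your proposal has no substitute for this input; note that whether the level set flow of $\graph u_0$ fattens and whether $(\Omega,u)$ is unique are listed as open problems in the paper, so the identification you implicitly use cannot be taken for granted.

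Your second step is also not a proof as stated, though it is closer to being repairable. The initial distance between $\graph u_0$ and $\partial A\times\R$ is zero (the graph is asymptotic to the cylinder), so a distance-monotonicity avoidance argument has no room to start, and the proposed exhaustion by the compact truncations $\{u\le K\}$ produces manifolds with free boundary $\{u=K\}$ lying exactly in the region where the graph hugs the cylinder -- the dangerous region -- so avoidance cannot be applied to the truncated pieces without already controlling what you are trying to prove. The inclusion $\Omega_t\subset\D_t^+$ is obtained in the paper from the elementary viscosity comparison $w\ge v$ (Lemma \ref{E C lem}) combined once more with Lemma \ref{graph E lem}, i.e.\ it also passes through the missing key lemma; alternatively one could argue that the complete properly embedded graph flow is a weak set flow and use maximality of the level set flow, but that is a different argument from the one you describe. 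Finally, the paper only obtains the measure-theoretic chain $\D_t\subset\Omega_t\subset\D_t^\mu\subset\D_t^+$ and hence $\partial^\mu\Omega_t=\partial^\mu\D_t$; the pointwise identity $\partial\Omega_t=\partial\D_t$ claimed at the end of your argument is stronger than what the non-fattening hypothesis is known to give.
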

For the definition of a level set solution and fattening, we refer to
Appendix \ref{lsf appendix}.

In order to prove Theorem \ref{mainlevelsettheo} we need a few
definitions which we summarize in Table \ref{notion tab}. Unless
stated otherwise, we will always assume that we consider signed
distance functions which are truncated between $-1$ and $1$, i.\,e.{}
we consider $\max\{-1,\min\{d,1\}\}$, and negative inside the set or
above the graph considered.

\begin{enumerate}[(i)]
\item Let $\tilde v\colon\R^{n+1}\times[0,\infty) \to\R$ be the
  solution to \eqref{weq} such that $\tilde v(\cdot,0)$ is the
  distance function to $\partial\Omega_0$. Set
  $\D_t:=\left\{x\in\R^{n+1}\colon \tilde v(x,t)<0\right\}$.
\item Let $v\colon\R^{n+2}\times[0,\infty)\to\R$ be the solution to
  \eqref{weq} such that $v(\cdot,0)$ is the distance function to
  $\partial\Omega_0\times\R$. Set
  $C_t:=\left\{\left(x,x^{n+2}\right)\in\R^{n+2}\colon
    v\left(x,x^{n+2},t\right)<0\right\}$.
\item Let $w\colon\R^{n+2}\times[0,\infty)\to\R$ be the solution to
  \eqref{weq} such that $w(\cdot,0)$ is the distance function to
  $\graph u(\cdot,0)|_{\Omega_0}$. Set
  $E_t:=\left\{\left(x,x^{n+2}\right) \colon
    w\left(x,x^{n+2},t\right)<0 \right\}$. 
\end{enumerate}

\begin{table}[h]
  \centering
  \begin{tabular}{|c|c|c|}
    \hline
    solution to \eqref{weq} & initial set & set\\\hline
    $w$ & $\graph u_0$ & $E_t$\\\hline
    $\tilde v$ & $\partial\Omega_0$ & $\D_t$\\\hline
    $v$ & $\partial\Omega_0\times\R$ & $C_t$\\\hline
  \end{tabular}
  \medskip
  \caption{Notation for weak solutions}
  \label{notion tab}
\end{table}

Theorem \ref{mainlevelsettheo} will follow from
\begin{proposition} 
  \label{main level set prop}
  Let $(\Omega,u)$ be a solution to mean curvature flow as in Theorem
  \ref{exist thm}. If the level set evolution of $\partial\Omega_0$
  does not fatten, we obtain $\mathcal H^{n+1}$-almost everywhere that
  $\Omega_t=\D_t$ for all $t\ge0$, i.\,e.{} $\mathcal
  H^{n+1}(\Omega_t\triangle\D_t)=0$ for every $t\ge0$.
\end{proposition}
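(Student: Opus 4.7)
The plan is to exploit the vertical translation invariance of \eqref{mcf eq} in conjunction with two complementary barrier arguments for the level set flow, based on vertical translates of $\graph u$. First, I would record the ``cylinder identity'' $v(x,x^{n+2},t)=\tilde v(x,t)$: the right-hand side is independent of $x^{n+2}$, satisfies \eqref{weq} on $\R^{n+2}\times[0,\infty)$, and has as initial datum the truncated signed distance to $\partial\Omega_0\times\R$, so uniqueness of level set solutions (Appendix~\ref{lsf appendix}) forces the identity. Consequently $C_t=\D_t\times\R$ for every $t\ge 0$, and under the non-fattening hypothesis $\partial^\mu C_t=\partial\D_t\times\R$.

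For the outer inclusion $\Omega_t\subseteq\overline{\D_t}$ I would argue as follows. The smooth graph $\graph u(\cdot,t)$ is a classical mean curvature flow with $\graph u_0\subseteq\overline{A}\times\R=\overline{C_0}$. The avoidance principle for level set flow then propagates this inclusion, giving $\graph u(\cdot,t)\subseteq\overline{C_t}=\overline{\D_t}\times\R$; projecting onto the first $n+1$ coordinates yields $\Omega_t\subseteq\overline{\D_t}$.

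For the inner inclusion I would use the translates $u_R:=u-R$, each of which is again a classical singularity resolving solution on the same domain $\Omega$. As $R\to\infty$, the initial graphs $\graph u_0-R\,e_{n+2}$ converge, on compact subsets of $\R^{n+2}$ and in Hausdorff distance, to the cylinder $\partial\Omega_0\times\R$; this uses the maximality condition, which forces $u_0\to+\infty$ on approach to $\partial A$. Continuous dependence of \eqref{weq} on initial data, combined with Step~1, then yields that $\graph u(\cdot,t)-R\,e_{n+2}$ converges in the same sense to $\partial\D_t\times\R$, so $\partial\D_t\subseteq\overline{\Omega_t}$. Interior points of $\D_t$ are handled by a separate interior-barrier argument: for any ball $B_r(x_0)\Subset\D_t$, a shrinking sphere enclosed in $\D_t\times\R$ placed at height $-R$ is an inner barrier for $\graph u_R(\cdot,\tau)$ for small $\tau$, and by iteration this forces $x_0\in\overline{\Omega_t}$. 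Combining gives $\overline{\D_t}\subseteq\overline{\Omega_t}$, hence $\overline{\Omega_t}=\overline{\D_t}$.

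Conclusion: $\Omega_t\triangle\D_t$ is contained in the common boundary $\partial\D_t=\{\tilde v(\cdot,t)=0\}$. Non-fattening says precisely that this zero set has empty interior, and the standard structural results for non-fattening level set flows recalled in Appendix~\ref{lsf appendix} upgrade this to $\mathcal H^{n+1}(\partial\D_t)=0$, completing the proof. The main obstacle is the inner inclusion at interior points of $\D_t$: the translate argument naturally controls behavior near $\partial\D_t$ via the cylinder identity of Step~1, and ruling out interior ``holes'' in $\Omega_t$ requires either a direct comparison of $u-R$ against the level set subsolution $\tilde v$ or an inner-barrier argument against shrinking spheres, both of which rely essentially on the completeness of the graphs $\graph u(\cdot,t)$ guaranteed by maximality.
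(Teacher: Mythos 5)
Your outline parallels the paper's (cylinder identity, outer inclusion $\Omega_t\subset\D_t^\mu$, inner inclusion $\D_t\subset\Omega_t$), and your Step~1 is exactly Lemma~\ref{v v lem}. But the central technical ingredient of the paper's proof is missing: Lemma~\ref{graph E lem}, which shows $\graph u(\cdot,t)\subset\partial E^+_t$, where $E_t$ is the level set evolution of $\graph u_0$, together with its consequence Corollary~\ref{x outside Omega pos cor} (if $x\notin\Omega_t$, then $w(x,x^{n+2},t)>0$ for all $x^{n+2}$). That lemma is where the real work happens: it is proved by taking the bounded approximants $u^L$ from Theorem~\ref{exist thm}, invoking the non-fattening result of Biton--Cardaliaguet--Ley \cite{BitonNonfattening} for the bounded Lipschitz graphs $\graph u^L_0$ (so that $\graph u^L(\cdot,t)$ is exactly $\{w^L(\cdot,t)=0\}$), and passing to the limit via monotone convergence $w^L(\cdot,0)\nearrow w(\cdot,0)$ and Theorem~\ref{monotoneconv}. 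Nothing in your proposal supplies this link between the noncompact classical flow and the level set flow of its own initial graph.

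Both of your inclusions suffer from this omission. For the outer inclusion you write ``the avoidance principle for level set flow then propagates this inclusion,'' but avoidance between a noncompact smooth flow and a level set flow is not a black box -- the graph could \emph{a priori} detach at infinity -- and the paper earns it precisely by combining $\graph u(\cdot,t)\subset\partial E^+_t$ (Lemma~\ref{graph E lem}) with the elementary comparison $w\ge v$ (Lemma~\ref{E C lem}), then passing to the measure-theoretic interior in Corollary~\ref{omega Dmu cor}. For the inner inclusion, your claim that continuous dependence for \eqref{weq} implies that $\graph u(\cdot,t)-Re_{n+2}$ converges to $\partial\D_t\times\R$ is unjustified: convergence $w^R\searrow v$ of the level set functions does not by itself control where the smooth graph sits. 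The paper instead argues by contradiction in Lemma~\ref{Dt in omegat}: if $x\in\D_t\setminus\Omega_t$, apply Corollary~\ref{x outside Omega pos cor} to the translate $u-R$ to get $w^R(x,x^{n+2},t)>0$ for every $R$; since $w^R(\cdot,0)\searrow v(\cdot,0)$ forces $w^R(\cdot,t)\searrow v(\cdot,t)$, this gives $v(x,x^{n+2},t)\ge0$, contradicting $\tilde v(x,t)<0$ via the cylinder identity. You do flag the inner inclusion as the ``main obstacle'' and gesture at ``a direct comparison of $u-R$ against $\tilde v$,'' which is the right instinct, but neither that nor the sketched shrinking-sphere barrier closes the gap without first establishing that $\graph u(\cdot,t)$ sits on the zero level set of $w$; that is the lemma you would need to prove.
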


We start by showing that $v$ and $\tilde v$ are closely related.
\begin{lemma}\label{v v lem}
  For $v$ and $\tilde v$ as above, we have $v\left(x,x^{n+2},t\right)
  =\tilde v(x,t)$ for all points
  $\left(x,x^{n+2},t\right)\in\R^{n+1}\times\R\times[0,\infty)$. This
  implies $\D_t\times\R =C_t$ and $\D_t^+\times\R =C^+_t$.
\end{lemma}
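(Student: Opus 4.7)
The strategy is a symmetry/uniqueness argument: the initial data for $v$ is translation invariant in the $x^{n+2}$ direction, so the solution inherits this invariance, and the $(n+2)$-dimensional level set equation restricted to functions independent of $x^{n+2}$ reduces to the $(n+1)$-dimensional one satisfied by $\tilde v$.

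First I would verify the initial condition. For any $(x,x^{n+2})\in\R^{n+2}$, since $\partial\Omega_0\times\R$ is invariant under vertical translations, the Euclidean distance $\dist\bigl((x,x^{n+2}),\partial\Omega_0\times\R\bigr)$ equals $\dist(x,\partial\Omega_0)$; applying the $[-1,1]$-truncation commutes with independence of $x^{n+2}$, and the sign convention matches because $(x,x^{n+2})$ lies in $\Omega_0\times\R$ iff $x\in\Omega_0$. Hence $v(x,x^{n+2},0)=\tilde v(x,0)$.

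Next define $\hat v\colon\R^{n+2}\times[0,\infty)\to\R$ by $\hat v(x,x^{n+2},t):=\tilde v(x,t)$. Because $\hat v$ has no $x^{n+2}$-dependence, its full gradient in $\R^{n+2}$ equals the gradient of $\tilde v$ in $\R^{n+1}$ padded with a zero component, and similarly for $D\hat v/|D\hat v|$. Consequently the $(n+2)$-dimensional divergence of $D\hat v/|D\hat v|$ coincides with the $(n+1)$-dimensional one, and $|D\hat v|=|D\tilde v|$. Therefore $\hat v$ satisfies \eqref{weq} on $\R^{n+2}\times[0,\infty)$ with the same initial data as $v$. Uniqueness of viscosity solutions to the level set equation (which I invoke from the results collected in Appendix \ref{lsf appendix}) then gives $v\equiv\hat v$, which is precisely $v(x,x^{n+2},t)=\tilde v(x,t)$.

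Finally the set identities are immediate from this pointwise identity. A point $(x,x^{n+2})$ lies in $C_t$ iff $v(x,x^{n+2},t)<0$ iff $\tilde v(x,t)<0$ iff $x\in\D_t$, giving $C_t=\D_t\times\R$. The same equivalence with $>0$ in place of $<0$ yields $C_t^+=\D_t^+\times\R$. The main potential obstacle is simply making sure the uniqueness statement for \eqref{weq} in the appendix applies to the (bounded, truncated) initial data at hand; beyond that the lemma is a direct consequence of translation invariance.
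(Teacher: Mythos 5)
Your argument is correct and follows essentially the same route as the paper: the paper's proof is a one-line appeal to uniqueness of viscosity solutions (Theorem \ref{existence}) after observing $v(x,x^{n+2},0)=\tilde v(x,0)$, and you simply spell out the vertical translation invariance, the dimension reduction for the extended function $\hat v$, and the resulting set identities. The only point to phrase with a little more care is that $\hat v$ solves \eqref{weq} in the viscosity sense (the reduction should be checked with test functions rather than by the classical gradient/divergence computation, since $\tilde v$ need not be smooth), but this is a standard fact and does not affect the argument.
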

\begin{proof}
  This follows directly from uniqueness of solutions to \eqref{weq} as
  $v\left(x,x^{n+2},0\right)=\tilde v(x,0)$. See Theorem
  \ref{existence}.
\end{proof}

\begin{lemma}\label{E C lem}
  We have $w\ge v$. In particular, $E_t^+\subset C^+_t$.
\end{lemma}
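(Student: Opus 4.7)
The plan is to reduce the assertion to the comparison principle for the level set equation \eqref{weq}. Once $w(\cdot,0)\ge v(\cdot,0)$ is established pointwise on $\R^{n+2}$, the uniqueness/comparison theorem for \eqref{weq} recorded in Appendix \ref{lsf appendix} (cf.\ Theorem \ref{existence}) propagates this inequality to every $t\ge0$. The inclusion $E_t^+\subset C_t^+$ then drops out of the sign convention: $w\le0$ characterises the closed epigraph of $u_0$, $v\le 0$ characterises $\overline{\Omega_0}\times\R$, and so $w\ge v$ forces $\{w\le0\}\subset\{v\le0\}$.

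All the geometry therefore sits in the initial comparison, which I would verify by splitting into three cases according to the location of $p=(x,x^{n+2})\in\R^{n+2}$. If $x\notin\Omega_0$, every point of $\graph u_0$ has the form $(y,u_0(y))$ with $y\in\Omega_0$, so $|p-(y,u_0(y))|\ge|x-y|\ge\dist(x,\partial\Omega_0)$; both distances are nonnegative and the desired inequality already holds (and survives the truncation to $[-1,1]$). If $x\in\Omega_0$ and $x^{n+2}\le u_0(x)$, then $w(p,0)\ge 0\ge v(p,0)$ trivially.

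The interesting case, where both $w$ and $v$ are negative, is $x\in\Omega_0$ with $x^{n+2}>u_0(x)$. Setting $r:=\dist(x,\partial\Omega_0)$ and choosing $y_0\in\partial\Omega_0$ with $|x-y_0|=r$, the map $s\mapsto u_0\bigl(x+s(y_0-x)/r\bigr)$ is continuous on $[0,r)$, starts at $u_0(x)<x^{n+2}$, and blows up as $s\to r^-$ thanks to the maximality assumption $u_0\to\infty$ at $\partial\Omega_0$. The intermediate value theorem yields $s^{*}\in(0,r)$ at which this map attains the value $x^{n+2}$, producing a graph point $q$ with $|p-q|=s^{*}<r$. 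Hence $\dist(p,\graph u_0)<r=\dist(x,\partial\Omega_0)$; after flipping signs and truncating, this reads $w(p,0)\ge v(p,0)$. This third case is the main obstacle, and it is precisely where the maximality of $u_0$ enters essentially: without the blow-up of $u_0$ at $\partial\Omega_0$ the graph would not fill the cylinder $\partial\Omega_0\times\R$ enough to force this distance comparison, and the inequality could fail for points of the epigraph lying close to the cylinder.
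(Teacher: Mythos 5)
Your proof is correct and takes essentially the same route as the paper: the paper deduces $w\ge v$ from the initial inequality $w(\cdot,0)\ge v(\cdot,0)$ together with the comparison principle (Theorem \ref{adaptedcp} --- note that this, rather than Theorem \ref{existence}, is the appendix result you want to invoke), and your three-case analysis merely supplies the pointwise verification of the initial inequality that the paper leaves implicit. In particular, your use of the blow-up of $u_0$ at $\partial\Omega_0$ (via the maximality condition) in the case $x\in\Omega_0$, $x^{n+2}>u_0(x)$ is exactly the geometric content needed, so the argument is complete.
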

\begin{proof}
  This follows from $w(\cdot,0)\ge v(\cdot,0)$ and Theorem
  \ref{adaptedcp}. 
\end{proof}

\begin{lemma}\label{graph E lem}
  We have $\graph u(\cdot,t)\subset \partial E^+_t$. 
\end{lemma}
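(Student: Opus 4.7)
The plan is to use $\graph u(\cdot,t)$ itself as a smooth classical barrier for the level set flow defining $w$. To that end, partition $\R^{n+2}$ into three pieces: the open region strictly above the graph within its domain,
\[U^+_t := \{(x,s)\in\R^{n+1}\times\R : x\in\Omega_t,\ s > u(x,t)\},\]
the open complement $U^-_t := \R^{n+2}\setminus\overline{U^+_t}$, and the graph itself. By maximality of $u$, the set $U^-_t$ comprises the region strictly below $\graph u(\cdot,t)$ inside $\Omega_t\times\R$ together with $(\R^{n+1}\setminus\Omega_t)\times\R$, the two pieces being connected through the ``ends'' where $u\to\infty$. By the sign convention for the signed distance function, $w(\cdot,0)<0$ on $U^+_0$, $w(\cdot,0)=0$ on $\graph u_0$, and $w(\cdot,0)>0$ on $U^-_0$.

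The central step is to propagate these strict signs to positive time: $w(\cdot,t)<0$ on $U^+_t$ and $w(\cdot,t)>0$ on $U^-_t$. Since by Theorem \ref{exist thm} the graph $\graph u(\cdot,t)$ is a smooth classical solution to MCF, the comparison principle of the appendix (Theorem \ref{adaptedcp}) applied with $\graph u(\cdot,t)$ as a smooth inner/outer barrier yields exactly this propagation. Strict inequality can be recovered from the weak (non-strict) comparison by a simple softening: replace the barrier by the vertically translated graphs $\graph(u(\cdot,t)\pm\epsilon)$, apply the comparison, and let $\epsilon\downarrow 0$.

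With the sign propagation in hand, the lemma follows. For $p=(x,u(x,t))\in\graph u(\cdot,t)$, approaching $p$ from $U^+_t$ shows $p\in\overline{U^+_t}\subset\overline{E_t}\subset E_t^+$, while approaching $p$ from $U^-_t$ via, say, the sequence $(x,u(x,t)-1/n)$ shows that $p$ is a limit of points in $\{w(\cdot,t)>0\}$, so $p$ cannot lie in the interior of $E_t^+$. Hence $p\in\partial E_t^+$.

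The principal obstacle is that $\graph u(\cdot,t)$ is noncompact, escaping to $+\infty$ near $\partial\Omega_t$, whereas comparison principles for the level set flow are typically formulated against compact barriers. To handle this, one works with the bounded approximating solutions $u^L_{\epsilon,R}$ from Lemma \ref{ex approx sol lem}: each $\graph u^L(\cdot,t)$ is spatially bounded and capped at height $L$, so the comparison principle applies to it directly. Because $u^L\to u$ smoothly on compact subsets of $\Omega\cap(\R^{n+1}\times(0,\infty))$, passing to the limit $L\to\infty$ transfers the required barrier property from the compact approximations to the full graph $\graph u(\cdot,t)$.
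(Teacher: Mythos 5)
The central step of your argument---using $\graph u(\cdot,t)$ (or its approximations) as a ``barrier'' through Theorem \ref{adaptedcp}---is where the proof breaks down. Theorem \ref{adaptedcp} compares two functions that are viscosity sub-/supersolutions of \eqref{weq} on all of $\R^{n+2}\times[0,T)$; a smooth evolving hypersurface is not such an object, and converting it into one (say via its signed distance function, or by invoking avoidance for its zero level set) is exactly the consistency statement between the smooth graphical flow and the level set flow that has to be proved here. The paper obtains this only for the entire-graph approximations $u^L$, via the non-fattening translation argument of \cite{BitonNonfattening}: the gap inequality $w^L(x,0)\ge w^L\left(x+\epsilon e_{n+2},0\right)+\delta$ is propagated by Theorems \ref{well-posed} and \ref{adaptedcp} and forces $\left\{w^L(\cdot,t)=0\right\}=\graph u^L(\cdot,t)$. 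Your compactness fix does not supply a substitute: $\graph u^L(\cdot,t)$ is an entire graph, hence spatially unbounded, while $\graph u^L_{\epsilon,R}(\cdot,t)$ is a compact graph with boundary $\partial B_R(0)\times\{L\}$, so neither is a closed hypersurface to which a standard avoidance/comparison against the level set flow applies ``directly''. Moreover, the initial approximating graphs touch $\graph u_0$ (they coincide wherever $u_0<L$), so even a legitimate comparison yields only non-strict inequalities, and your vertical $\epsilon$-translation softening likewise only gives $w\le0$ above and $w\ge0$ below in the limit; but the lemma needs strict positivity of $w$ strictly below the graph to conclude that graph points lie on $\partial E_t^+$. (The strict negativity of $w$ above the graph that you assert is neither needed nor established in the paper; it could fail if the level set flow of $\graph u_0$ fattens, which is listed as an open problem.)

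There is a second gap in the limit $L\to\infty$: smooth locally uniform convergence $u^L\to u$ on compact subsets of $\Omega\cap\left(\R^{n+1}\times(0,\infty)\right)$ says nothing by itself about the level set function $w$, which is the object the lemma concerns. The paper's mechanism is monotonicity at the level of the level set functions: $w^L(\cdot,0)\nearrow w(\cdot,0)$, hence $w^L(\cdot,t)\nearrow w(\cdot,t)$ by Theorem \ref{monotoneconv}. This gives $w\ge w^L$, hence $w>0$ strictly below $\graph u(\cdot,t)$ (any such point lies strictly below $\graph u^L(\cdot,t)$ for some $L$), and, using the local uniformity of the monotone convergence, $w\left(x,u(x,t),t\right)=\lim_{L\to\infty}w^L\left(x,u^L(x,t),t\right)=0$. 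Without an argument of this type, the asserted ``transfer of the barrier property'' from the approximations to $\graph u(\cdot,t)$ is unsupported.
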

\begin{proof}
  Let $w^L\colon\R^{n+2}\times[0,\infty)\to\R$ be the solution to
  \eqref{weq} with $w^L(\cdot,0)$ equal to the distance function to
  $\graph u^L$, where $u^L$ is as in Theorem \ref{exist
    thm}. According to \cite{BitonNonfattening} the solution $w^L$
  does not fatten: For each $\epsilon>0$ there is a $\delta>0$ such
  that the inequality $w^L(x,0)\geq w^L\left(x+\epsilon
    e^{n+2},0\right)+\delta$ holds if we truncate at appropriate
  heights.  By Theorem \ref{well-posed} and Theorem \ref{adaptedcp} we
  have that $w^L(x,t)\geq w^L\left(x+\epsilon e^{n+2},t\right)+\delta$
  near the zero level set. Hence
  $\left\{\left(x,x^{n+2}\right)\in\R^{n+1}\times\R\colon
    w^L\left(x,x^{n+2},t\right)=0\right\} =\graph u^L(\cdot,t)$.\par
  Observe that $w^L(\cdot,0)\nearrow w(\cdot,0)$. Hence Theorem
  \ref{monotoneconv} implies that $w^L(\cdot,t)\nearrow w(\cdot,t)$
  for all $t\ge0$. \par Let $x^{n+2}<u(x,t)$. Then $x^{n+2}<u^L(x,t)$
  for some $L$ and hence $w^L\left(x,x^{n+2},t\right)>0$. Since
  $w\left(x,x^{n+2},t\right)\ge w^L\left(x,x^{n+2},t\right)>0$ we have
  that
  \begin{equation}
    \label{u w incl eq}
    \left\{ \left(x, x^{n+2}\right)\colon x^{n+2}<u(x,t)\right\}
    \subset \left\{ \left(x, x^{n+2}\right)\colon 0<w\left(x,
        x^{n+2},t\right)\right\}. 
  \end{equation}
  On the other hand, for every $\left(x,x^{n+2},t\right)$ such that
  $x^{n+2}=u(x,t)$ there is a sequence $\left(x, u^L(x,t)\right)_L$
  such that $\left(x, u^L(x,t)\right)\to \left(x, u(x,t)\right)$ as
  $L\to \infty$.  Moreover, since the $w^L$ converge monotonically,
  the convergence is locally uniform. We conclude that
  $$0 =\lim\limits_{L\to\infty} w^L\left(x, u^L(x,t),t\right)=w
  \left(x, u(x,t),t\right).$$ This concludes the proof of $\graph
  u(\cdot,t)\subset\partial E^+_t$. \par
  By arguments similar to those used for proving \eqref{u w incl eq},
  we can show that
  \[\left\{ \left(x, x^{n+2}\right)\colon x^{n+2}>u(x,t)\right\}
  \subset \left\{ \left(x, x^{n+2}\right)\colon w\left(x,
      x^{n+2},t\right)\leq 0\right\}.  \qedhere\]
\end{proof}

\begin{corollary}
  \label{x outside Omega pos cor}
  Let $x\not\in\Omega_t$ then $w\left(x,x^{n+2},t\right)>0$ for any
  $x^{n+2}$.
\end{corollary}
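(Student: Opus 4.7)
The plan is to exploit the two nested approximations already constructed in the paper: the graph approximations $u^L$ from Theorem \ref{exist thm} and the associated level set solutions $w^L$ introduced in the proof of Lemma \ref{graph E lem}. The driving idea is that $x\notin\Omega_t$ forces the heights $u^L(x,t)$ to blow up as $L\to\infty$, which via the identification of $\graph u^L(\cdot,t)$ with the zero set of $w^L(\cdot,t)$ yields strict positivity of $w$ at any prescribed height $x^{n+2}$.

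First I would show that, along the subsequence fixed in the proof of Theorem \ref{exist thm}, $u^L(x,t)\to+\infty$ as $L\to\infty$ whenever $x\notin\Omega_t$. This is exactly how $\Omega$ was constructed via Lemma \ref{AA lem}: on its complement the approximating functions are forced to infinity. Consequently, for any fixed $x^{n+2}\in\R$ the point $(x,x^{n+2})$ lies strictly below $\graph u^L(\cdot,t)$ for all sufficiently large $L$.

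Next I would invoke the key observation from the proof of Lemma \ref{graph E lem}: thanks to the non-fattening result of \cite{BitonNonfattening}, the zero level set of $w^L(\cdot,t)$ coincides with $\graph u^L(\cdot,t)$, with $w^L$ strictly positive on the side lying below the graph. Therefore $w^L(x,x^{n+2},t)>0$ for the values of $L$ just selected.

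To close the argument, the monotone convergence $w^L\nearrow w$ already established in the proof of Lemma \ref{graph E lem} gives $w(x,x^{n+2},t)\ge w^L(x,x^{n+2},t)>0$. The only step requiring attention is the first one --- one must make sure that the divergence $u^L(x,t)\to\infty$ is taken along the very subsequence fixed in Theorem \ref{exist thm}; but this is precisely the defining property of the set $\Omega$ produced there, so no further work is required.
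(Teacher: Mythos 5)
Your proposal is correct and follows essentially the same route as the paper: the paper's proof simply notes that the argument from Lemma \ref{graph E lem} (pass to $u^L$, use non-fattening of $w^L$ to place $(x,x^{n+2})$ strictly below $\graph u^L(\cdot,t)$ where $w^L>0$, then use $w\ge w^L$) extends to the case $u(x,t)=+\infty$, which is exactly what you spell out via the divergence $u^L(x,t)\to\infty$ on the complement of $\Omega$ guaranteed by Lemma \ref{AA lem}.
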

\begin{proof}
  The above argument in the case $x^{n+2}<u(x,t)$ also extends to the
  case $u(x,t)=+\infty$. 
\end{proof}

\begin{corollary}
  \label{omega Dmu cor}
  If $C_t$ or, equivalently, $\D_t$ does not fatten, then
  $\Omega_t\subset \D^\mu_t$.
\end{corollary}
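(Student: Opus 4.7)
The plan is to show that every $x \in \Omega_t$ has an entire Euclidean neighborhood contained in $\D_t^+$, and then invoke non-fattening to conclude that such $x$ must lie in the measure-theoretic interior $\D_t^\mu$. So the proof reduces to chaining together the inclusions already established in Lemmas \ref{v v lem}, \ref{E C lem} and \ref{graph E lem} together with openness of $\Omega_t$.

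First I would fix $x \in \Omega_t$. Since $\Omega \subset \R^{n+1} \times [0,\infty)$ is relatively open by Definition \ref{mcf def}\,\eqref{mcf def i}, the slice $\Omega_t$ is open in $\R^{n+1}$, so there exists $r > 0$ with $B_r(x) \subset \Omega_t$; on this ball $u(\cdot,t)$ is finite. Next I would show $B_r(x) \subset \D_t^+$. For any $y \in B_r(x)$ and any height $x^{n+2} > u(y,t)$, the second inclusion proved inside Lemma \ref{graph E lem} gives $w(y,x^{n+2},t) \le 0$, i.e.\ $(y,x^{n+2}) \in E_t^+$. Applying Lemma \ref{E C lem} and then Lemma \ref{v v lem}, we get
\[
(y,x^{n+2}) \in E_t^+ \subset C_t^+ = \D_t^+ \times \R,
\]
so that $y \in \D_t^+$. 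Hence $B_r(x) \subset \D_t^+$.

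Finally I would use the non-fattening hypothesis. Non-fattening of $\D_t$ is precisely the statement that $\mathcal{H}^{n+1}(\D_t^+ \setminus \D_t) = 0$ (this is part of the standard level-set-flow setup recorded in Appendix \ref{lsf appendix}). Combined with $B_r(x) \subset \D_t^+$, this yields $\mathcal{H}^{n+1}(B_r(x) \setminus \D_t) = 0$, so $x$ is a point of density one for $\D_t$, and therefore $x \in \D_t^\mu$.

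The main obstacle, such as it is, lies not in producing the argument but in verifying that the appendix supplies the precise identifications used: namely that ``non-fattening'' of $\D_t$ coincides with $\mathcal{H}^{n+1}(\D_t^+ \setminus \D_t) = 0$ and that $\D_t^\mu$ denotes the measure-theoretic interior (points of density one) so that containing an open neighborhood up to a null set suffices. Once these conventions are in hand, the proof is simply the chain $(y, x^{n+2}) \in E_t^+ \subset C_t^+ = \D_t^+ \times \R$, together with openness of $\Omega_t$.
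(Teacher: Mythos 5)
Your proof is correct and follows essentially the same route as the paper's: chain Lemmata \ref{v v lem}, \ref{E C lem}, and \ref{graph E lem} to get $\Omega_t\subset\D_t^+$, then use openness of $\Omega_t$ together with $\mathcal{H}^{n+1}(\D_t^+\setminus\D_t)=0$ to conclude $\Omega_t\subset\D_t^\mu$. The only cosmetic difference is that you invoke the auxiliary inclusion $\{x^{n+2}>u(x,t)\}\subset\{w\le0\}$ from the end of the proof of Lemma \ref{graph E lem}, whereas the paper uses the stated conclusion $\graph u(\cdot,t)\subset\partial E_t^+$ directly; both yield $\Omega_t\subset\D_t^+$.
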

\begin{proof}
  Combining Lemmata \ref{v v lem}, \ref{E C lem} and \ref{graph E lem}
  we get $\graph u(\cdot,t)\subset\D_t^+\times\R$. This implies
  $\Omega_t\subset\D_t^+$. As $\D_t$ is not fattening, we see that
  $\mathcal H^{n+1}\left(\D_t^+\setminus\D_t\right)=0$. Notice that
  $\D_t\subset \D_t^\mu \subset \D^+_t$.  As $\Omega_t$ is an open
  set, the claim follows.
\end{proof}

The following lemma shows that $\graph u(\cdot,t)$ does not ``detach''
from the evolving cylinder at infinity. 
\begin{lemma}\label{Dt in omegat}
  We have $\D_t\subset \Omega_t$.
\end{lemma}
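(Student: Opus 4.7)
My plan is to translate the initial graph downwards by $R\to\infty$ and compare the resulting family of level set evolutions with the cylinder evolution, exploiting the fact that $\graph u_0$ is asymptotic to $\partial\Omega_0\times\R$ at infinity (a consequence of the maximality of $u_0$). For each $R>0$ let $w_R\colon\R^{n+2}\times[0,\infty)\to\R$ be the solution of \eqref{weq} whose initial datum is the truncated signed distance to $\graph u_0 - R e_{n+2}$. Translation invariance of \eqref{weq} in the $e_{n+2}$-direction yields
\[w_R(x,x^{n+2},t)=w(x,x^{n+2}+R,t),\]
so the open sets $E_{R,t}:=\{w_R(\cdot,t)<0\}$ satisfy $E_{R,t}=E_t - R e_{n+2}$.

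At $t=0$ we have $E_{R,0}=\{(x,x^{n+2})\colon x\in A,\ x^{n+2}>u_0(x)-R\}$, which is an increasing family in $R$ whose union is $A\times\R=C_0$. The comparison principle (Theorem \ref{adaptedcp}) gives $E_{R_1,t}\subset E_{R_2,t}$ for $R_1\le R_2$, and combining the monotone convergence theorem for level set flow (Theorem \ref{monotoneconv}) with Lemma \ref{v v lem} identifies the limit $\bigcup_R E_{R,t}$ with the open cylinder evolution $C_t=\D_t\times\R$. Given $y_0\in\D_t$, the point $(y_0,0)$ lies in $C_t$, so there exists $R$ large enough that $(y_0,0)\in E_{R,t}$, that is $w(y_0,R,t)=w_R(y_0,0,t)<0$, hence $(y_0,R)\in E_t^+$. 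On the other hand Corollary \ref{x outside Omega pos cor} asserts that $y_0\notin\Omega_t$ would force $w(y_0,x^{n+2},t)>0$ for every $x^{n+2}\in\R$, contradicting $(y_0,R)\in E_t^+$. Therefore $y_0\in\Omega_t$, as desired.

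The step I expect to be most delicate is the verification that $\bigcup_R E_{R,t}$ really exhausts $C_t$ rather than some strictly smaller subset. Monotonicity in $R$ is immediate from the comparison principle, but the upgrade to attainment of every interior point of $C_t$ is where the monotone convergence theorem of the appendix plays its essential role. A convenient reformulation is the pointwise convergence $w_R(\cdot,0)\to v(\cdot,0)$ as $R\to\infty$ --- geometrically the statement that $\graph u_0$ looks like the cylinder $\partial\Omega_0\times\R$ when viewed from far above, so that the signed distance from $(x,x^{n+2}+R)$ to $\graph u_0$ tends to the signed distance from $x$ to $\partial\Omega_0$. The stability of \eqref{weq} then propagates this convergence to $w_R(\cdot,t)\to v(\cdot,t)$ at every later time, which is exactly what produces the required $R$ in the argument above.
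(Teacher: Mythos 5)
Your proof is correct and follows essentially the same route as the paper: introduce the translated solutions $w_R$ with initial data the distance to $\graph(u_0-R)$, use Theorem \ref{monotoneconv} to conclude $w_R\searrow v$ and hence $\bigcup_R E_{R,t}=C_t=\D_t\times\R$, and then play this off against Corollary \ref{x outside Omega pos cor} to exclude $y_0\in\D_t\setminus\Omega_t$. The explicit use of translation invariance to identify $w_R(x,x^{n+2},t)=w(x,x^{n+2}+R,t)$ is a clean way to invoke the Corollary directly for $w$, whereas the paper applies the analogous statement to $w^R$; the substance is identical.
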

\begin{proof}
  Denote by $w^R$ the solution to \eqref{weq} with initial condition
  the distance function to the set $\graph(u_0-R)$.
    
  Notice that $w^R(\cdot, 0)\searrow v(\cdot,0) $ as $R\to
  \infty$. Theorem \ref{monotoneconv} implies that
  \begin{equation} \label{convergenceofd} w^R(\cdot,t)\searrow
    v(\cdot,t) \hbox{ as }R\to \infty. \end{equation}
  
  Suppose there are $x, \;t$ such that $x\in \D_t\setminus
  \Omega_t$. Then by Corollary \ref{x outside Omega pos cor} it would
  hold for every $R>0$ and $x^{n+2}$ that
  $$w^R(x, x^{n+2},t)\ge 0 \hbox{ and }v(x , x^{n+2},t)<0. $$
  However, taking $R\to \infty$ this contradicts
  \eqref{convergenceofd}.
\end{proof}

\begin{proof}[Proof of Proposition \ref{main level set prop}]
  According to Corollary \ref{omega Dmu cor} and Lemma \ref{Dt in
    omegat} we have \[\D_t\subset\Omega_t\subset \D_t^\mu \subset
  \D^+_t.\] If there is no fattening $\mathcal
  H^{n+1}\left(\D_t^+\setminus \D_t\right)=0$. The claim follows.
\end{proof}

\begin{remark}
  \label{lsf sing flow rem}
  \neueZeile
  \begin{enumerate}[(i)]
  \item From Proposition \ref{main level set prop} we have that
    \[\sup\,\{t\ge0\colon u(\cdot,t)\not\equiv\infty\} 
    =\sup\left\{t\ge0\colon \D_t\ne\emptyset\right\},\] i.\,e.{} the
    singularity resolving solution vanishes at the same time as the
    level set solution. Here $u(x,t)=\infty$ is to be understood as in
    Lemma \ref{AA lem}.
  \item Generically, level set solutions do not fatten, see
    \cite{TomEllReg}. Examples of initial conditions that do not
    fatten are mean convex hypersurfaces (see \cite{WhiteNature}) and
    star-shaped domains of definition (see \cite{BitonNonfattening}
    and references therein).
  \item Under conditions similar to \cite{BitonNonfattening} it is
    possible to prove that $w$ does not fatten and that $(\Omega,u)$
    is unique. 
  \item Theorem \ref{mainlevelsettheo} also holds if the $\partial
    \Omega_0$ non-fattening assumption is replaced by non-fattening of
    the level set solution with initial condition $ \graph u_0$.
  \item\label{regular item} If $D\tilde v\neq0$ along $\{\tilde
    v\ne0\}$, we have $\D_t^\mu=\D_t$ and hence $\Omega_t=\D_t$.
  \end{enumerate}
\end{remark}

\appendix
\section{Definitions and known results for level set flow}
\label{lsf appendix}

Different approaches have been considered in order to define a weak
solution to mean curvature flow via a level set method (see for
example
\cite{ChenGigaGotolevelset,evanssprucklevelset1,JohnHeadPhD,schulzepowers}).
We define it as follows: Given an initial surface $\partial E_0$, we
define a level set solution to mean curvature flow as the set
$\partial E_t=\partial\{x: w(x,t)<0\}$, where $w$ satisfies in the
viscosity sense the equation
\begin{equation}\label{weq}
  \begin{cases} 
    \frac{\partial w}{\partial
      t}-\left(\delta^{ij}-\frac{w^{i}w^{j}}{|Dw|^2}\right)w_{ij}= 0
    &\hbox{ in }\R^{n+2}\times (0,\infty), \\ w(\cdot,0)= w_0(\cdot)
    &\hbox{ in }\R^{n+2}. 
  \end{cases}
\end{equation} 
and $E_0=\{x:w_0(x)<0\}$. We also set $E^+_t:=\{x\colon
w(x,t)\le0\}$. 

We say that a solution to \eqref{weq} does not fatten if $$\mathcal
H^{n+2}(\{w(\cdot,t)=0\})=0$$ for all $t\ge0$, where $\mathcal
H^{n+2}$ denotes the $(n+2)$-dimensional Hausdorff measure.

Observe that our definition of solutions differs from the notion in
\cite{ChenGigaGotolevelset,evanssprucklevelset1}: They define the
level set solution to be $\{x\colon w(x,t)=0\}$.  If there is
fattening, our definition picks the ``inner boundary''. Often,
however, these definitions coincide, see e.\,g.{}
\cite{evanssprucklevelset3,JohnHeadPhD}.

Let $E\subset\R^{n+2}$ be measurable. We define the open set $E^\mu$,
the measure theoretic interior of $E$,
by \[E^\mu:=\left\{x\in\R^{n+2}\colon \exists\, r>0\colon |B_r(x)|
  =|E\cap B_r(x)|\right\}.\] If $E$ is open, we get $E\subset E^\mu
\subset \ol E$. We also define the measure theoretic boundary
$\partial^\mu E$ of $E$ by \[\partial^\mu E:=\left\{x\in\R^{n+2}\colon
  \forall\, r>0\colon 0<|E\cap B_r(x)|<|B_r(x)|\right\}.\]

In what follows we summarize some results in the literature that will
be used in our proofs.  We will work with the class $BUC(Z)$ which are
functions uniformly continuous and bounded in $Z\subset \R^{n+2}\times
[0,T)$.
    
\begin{theorem}[Existence {\cite[Theorem
    4.3.5]{Gigabook}}] \label{existence} If $w_0 \in
  BUC\left(\R^{n+2}\right)$ then there is a unique viscosity solution
  $w\in BUC\left(\R^{n+2}\times[0,\infty)\right)$ to \eqref{weq}.
\end{theorem}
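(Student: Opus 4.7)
The plan is to establish uniqueness via a comparison principle and existence via Perron's method, adapted to handle the singularity of the operator at $Dw = 0$. Throughout, we work with the notion of viscosity solution introduced in \cite{ChenGigaGotolevelset,evanssprucklevelset1}: test functions $\phi$ touching $w$ from above or below at points where $D\phi = 0$ must satisfy suitable structural conditions so that the singular normalized mean curvature operator
\[
F(p,X) = -\,\tr\!\left[\left(I - \tfrac{p\otimes p}{|p|^2}\right)X\right]
\]
admits a meaningful evaluation at $p=0$. With this convention $F$ is degenerate elliptic and independent of $w$, so \eqref{weq} fits into the standard framework up to the issue at $Dw=0$.

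First I would prove the \textbf{comparison principle}: if $u$ is an upper semicontinuous bounded viscosity subsolution and $v$ is a lower semicontinuous bounded viscosity supersolution with $u(\cdot,0)\le v(\cdot,0)$, then $u\le v$ on $\R^{n+2}\times[0,\infty)$. The standard doubling of variables
\[
\Phi_\alpha(x,y,t,s) = u(x,t) - v(y,s) - \tfrac{\alpha}{2}|x-y|^2 - \tfrac{\alpha}{2}(t-s)^2 - \epsilon|x|^2 - \eta/(T-t)
\]
together with the Crandall-Ishii lemma would give matrices $X,Y$ with $X\le Y$ and a common gradient $p_\alpha = \alpha(x_\alpha - y_\alpha)$. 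When $p_\alpha\ne0$ the resulting inequality $F(p_\alpha,X) - F(p_\alpha,Y) \le 0$ follows from monotonicity of $F$ in $X$; the delicate case is $p_\alpha\to 0$, handled by the structural restriction placed on admissible test functions near critical points (so that only quadratic test functions contribute, on which $F$ extends continuously). Passing $\alpha\to\infty$, $\epsilon,\eta\to0$ gives the comparison principle. Uniqueness of BUC solutions is then immediate.

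For \textbf{existence}, I would first construct barriers. Given $w_0 \in BUC(\R^{n+2})$ with modulus of continuity $\omega$, for each $z\in\R^{n+2}$ the function $\phi^\pm_z(x,t) := w_0(z) \pm \omega(|x-z|) \pm C(n)\sqrt{t}$ serves, after a standard mollification, as a super- or subsolution pinching the eventual solution at $(z,0)$; here $C(n)\sqrt{t}$ absorbs the mean curvature term (since round spheres of radius $r$ evolve by $\dot r = -(n+1)/r$, a linear-in-$\sqrt{t}$ bound suffices). These barriers guarantee that the Perron envelope
\[
w(x,t) := \sup\{\,u(x,t) : u \text{ is a viscosity subsolution, } u(\cdot,0)\le w_0\,\}
\]
attains the initial datum continuously and inherits the modulus $\omega$ plus a $\sqrt{t}$-modulus in time; boundedness of $w$ follows from applying the same comparison to constant sub/super-solutions $\inf w_0$ and $\sup w_0$. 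Perron's method, in the form developed by Ishii, then produces a viscosity solution: the upper semicontinuous envelope $w^*$ is a subsolution and the lower semicontinuous envelope $w_*$ is a supersolution, and by comparison $w^*\le w_*$, so $w^* = w_* = w$ is continuous and hence BUC.

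The \textbf{main obstacle} is the comparison principle: the operator $F$ is both degenerate and singular at $p=0$, so the customary Crandall-Ishii machinery does not apply directly. The resolution, originating in \cite{ChenGigaGotolevelset,evanssprucklevelset1}, is the modified class of test functions together with the observation that the singular set $\{Dw=0\}$ can be navigated by approximating test functions with ones whose gradient does not vanish, together with the scaling invariance of $F$; once this is in place, the remaining steps are standard viscosity-solution technique.
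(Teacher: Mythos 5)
The paper does not actually prove this statement --- it is quoted directly from \cite[Theorem 4.3.5]{Gigabook} --- and your sketch follows essentially the same standard route as that source and \cite{ChenGigaGotolevelset,evanssprucklevelset1}: a comparison principle for the singular level-set operator via restricted test functions and doubling of variables, then Perron's method with barriers, with $L^\infty$ bounds from constant solutions and the spatial modulus preserved by translation invariance. The outline is correct; the one slip is in the explicit barrier: $w_0(z)\pm\omega(|x-z|)\pm C(n)\sqrt{t}$ is not literally a super-/subsolution near $x=z$ when $\omega'(0)>0$ (the radial curvature term behaves like $\omega'(r)/r$ and blows up there, while $\partial_t$ of $C\sqrt{t}$ is bounded for fixed $t>0$), and for merely uniformly continuous data the time modulus is of the form $\omega(c\sqrt{t})$ rather than $C\sqrt{t}$; the standard repair is to use, for each $\delta>0$, barriers of the type $w_0(z)+\omega(\delta)+A_\delta\left(|x-z|^2+t\right)$, which does not affect the rest of your argument.
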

 
\begin{theorem}[Geometric Uniqueness
  \cite{evanssprucklevelset1,Gigabook}] \label{well-posed} Let
  $w_1(x,t)$ and $w_2(x,t)$ be viscosity solutions to \eqref{weq} such
  that
  $$\{x:w_1(x,0)=0\}=\{x:w_2(x,0)=0\},$$
  then $$\{x:w_1(x,t)=0\}=\{x:w_2(x,t)=0\}$$ for any $t>0$.  
\end{theorem}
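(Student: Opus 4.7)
The plan is to exploit the two standard features of the level set equation \eqref{weq}: it is \emph{geometric} (invariant under composition with continuous non-decreasing functions) and it admits a comparison principle in $BUC$ (implicit in Theorem \ref{existence}). Together these reduce the uniqueness of the zero set to a sandwich argument at the level of the scalar functions $w_1, w_2$.

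By symmetry it suffices to prove $\{x : w_1(x,t)=0\} \subset \{x : w_2(x,t)=0\}$ for each $t>0$. Fix $\epsilon>0$ and set $Z := \{w_1(\cdot,0)=0\} = \{w_2(\cdot,0)=0\}$. The main construction is to produce two continuous non-decreasing functions $\theta_\epsilon^\pm \colon \R \to \R$ with $\theta_\epsilon^\pm(0)=0$ such that
\[
\theta_\epsilon^-(w_2(\cdot,0)) - \epsilon \;\le\; w_1(\cdot,0) \;\le\; \theta_\epsilon^+(w_2(\cdot,0)) + \epsilon
\]
on all of $\R^{n+2}$. A natural candidate is $\theta_\epsilon^+(s) := \sup\{w_1(y,0) : w_2(y,0) \le s\}$ and $\theta_\epsilon^-(s) := \inf\{w_1(y,0) : w_2(y,0) \ge s\}$ (taking truncations to handle the $BUC$ hypothesis), which are monotone by construction and, because $w_1(\cdot,0)$ and $w_2(\cdot,0)$ share the zero set $Z$ and are uniformly continuous, satisfy $\theta_\epsilon^\pm(s) \to 0$ as $s \to 0$ with modulus of continuity controlled by the shared modulus of continuity of the initial data near $Z$.

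Once the $\theta_\epsilon^\pm$ are in hand, the geometric invariance of \eqref{weq} implies that $\theta_\epsilon^\pm(w_2(\cdot,t))$ are viscosity solutions of the same equation (first for smooth strictly increasing approximations, then for continuous non-decreasing $\theta$ by stability of viscosity solutions under uniform convergence). The initial sandwich then propagates by the comparison principle to
\[
\theta_\epsilon^-(w_2(\cdot,t)) - \epsilon \;\le\; w_1(\cdot,t) \;\le\; \theta_\epsilon^+(w_2(\cdot,t)) + \epsilon \qquad \text{for all } t>0.
\]
Evaluating at a point $(x_0,t_0)$ with $w_1(x_0,t_0)=0$ yields $\theta_\epsilon^-(w_2(x_0,t_0)) \le \epsilon$ and $\theta_\epsilon^+(w_2(x_0,t_0)) \ge -\epsilon$; since $\theta_\epsilon^\pm$ are strictly separated from $0$ on $\{|s| \ge \delta(\epsilon)\}$ with $\delta(\epsilon) \to 0$, this forces $|w_2(x_0,t_0)| \le \delta(\epsilon)$. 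Letting $\epsilon \to 0$ gives $w_2(x_0,t_0)=0$, as required.

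The main obstacle is the construction of the sandwiching reparametrizations $\theta_\epsilon^\pm$. Monotone non-decreasing $\theta$ can collapse intervals, and $w_1(\cdot,0)$ and $w_2(\cdot,0)$ may have totally different slopes and sign patterns off of the shared zero set $Z$; ensuring simultaneous monotonicity, continuity at $0$, the $\theta_\epsilon^\pm(0)=0$ normalization, and the pointwise bound requires a careful coupling of the two moduli of continuity on tubular neighborhoods of $Z$, followed by a mollification to replace the $\sup$/$\inf$ definitions (which are merely upper/lower semicontinuous) by genuinely continuous monotone envelopes at the cost of an $\epsilon$.
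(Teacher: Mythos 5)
The paper does not prove this theorem; it is quoted from Evans--Spruck \cite{evanssprucklevelset1} and Giga \cite{Gigabook}, so there is no in-paper proof to compare against. Your strategy---geometric invariance of \eqref{weq} under continuous nondecreasing reparametrizations combined with the comparison principle---is exactly the mechanism in those references, so the skeleton is the right one.

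The construction of the sandwich $\theta_\epsilon^\pm$, which you yourself flag as the ``main obstacle,'' breaks down as proposed, and the issue is not merely the semicontinuity you mention. The candidate $\theta_\epsilon^+(s):=\sup\{w_1(y,0):w_2(y,0)\le s\}$ tends to $0$ as $s\to 0$ only if the sublevel sets are nested, $\{w_2(\cdot,0)<0\}\subset\{w_1(\cdot,0)\le 0\}$. The hypothesis (equal zero sets $Z$ only) does not give this: $w_2(\cdot,0)=-w_1(\cdot,0)$ is an admissible input, and then $\theta_\epsilon^+(s)\to\sup\{w_1(y,0):w_1(y,0)\ge0\}$ as $s\to0^-$, which is generically strictly positive, so the ``strictly separated from $0$ on $\{|s|\ge\delta(\epsilon)\}$ with $\delta(\epsilon)\to 0$'' property fails and the closing contradiction never materialises. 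More seriously, no single monotone sandwich $\theta^-(w_2(\cdot,0))\le w_1(\cdot,0)\le\theta^+(w_2(\cdot,0))$ with $\theta^\pm(0)=0$ need exist at all: if $Z$ separates $\R^{n+2}$ into several components, $w_1(\cdot,0)=|w_2(\cdot,0)|$ shares the zero set with $w_2(\cdot,0)$ but is not a monotone function of it. The cited proofs avoid this either by formulating uniqueness for the triple $(\{w<0\},\{w=0\},\{w>0\})$ under a matching-sign hypothesis---under which your construction can be repaired, and which does hold for the truncated signed-distance initial data used throughout this paper---or by comparing each $w_i$ (and $-w_i$) against a fixed auxiliary solution whose initial datum is built from the geometry of $Z$ alone, which requires the coercivity coming from the boundedness/compactness assumptions left implicit in the statement. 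Either way, a sign-aware argument is needed; the raw $\sup/\inf$ over sublevel sets of $w_2(\cdot,0)$ is not enough.
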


Following Theorem 3.1.4 in \cite{Gigabook} we have the following
result for continuous sub- and super-solutions:
\begin{theorem}[Comparison principle]\label{adaptedcp}
  Let $w$ and $v$ be continuous sub- and super-solutions of
  \eqref{weq}, respectively, in the viscosity sense in $\R^{n+2}
  \times [0,T)$.  Assume that $w$ and $-v$ are bounded from above in
  $\R^{n+2}\times[0,T)$. Assume that
  $$w(x,0)-v(x,0)\leq 0$$
  then
  $$w(x,t)-v(x,t)\leq 0 \hbox{ for } (x,t)\in  \R^{n+2}\times[0,T).$$
\end{theorem}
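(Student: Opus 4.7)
The plan is to apply the standard Crandall-Ishii doubling-of-variables technique for viscosity comparison, adapted to handle two difficulties specific to \eqref{weq}: the singularity of the operator $F(p,X) = -\left(\delta^{ij} - p^ip^j/|p|^2\right)X_{ij}$ at $p = 0$, and the unbounded spatial domain $\R^{n+2}$. The singularity is managed by working with the semicontinuous envelopes $F^\ast, F_\ast$ following Chen-Giga-Goto, and the unboundedness by a coercive quadratic penalty.

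I argue by contradiction. Supposing $\sigma := \sup(w - v) > 0$ on $\R^{n+2}\times[0, T-\delta]$ for some small $\delta > 0$, I introduce for parameters $\alpha, \epsilon, \eta > 0$ the doubled-variable auxiliary function
\[
\Phi(x,y,t,s) := w(x,t) - v(y,s) - \frac{|x-y|^2 + (t-s)^2}{\epsilon} - \alpha(|x|^2 + |y|^2) - \frac{\eta}{T-t}.
\]
Since $w$ and $-v$ are bounded above and the quadratic term in $x,y$ is coercive, $\Phi$ attains its supremum at some $(\hat x, \hat y, \hat t, \hat s)$. Choosing $\alpha, \eta$ small compared to $\sigma$, the supremum stays positive, and the initial-value hypothesis $w(\cdot,0) \le v(\cdot,0)$ together with the blow-up of $\eta/(T-t)$ as $t\nearrow T$ forces $0 < \hat t, \hat s < T$. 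Standard penalty estimates give $|\hat x - \hat y|^2/\epsilon \to 0$ and $\alpha(|\hat x|^2 + |\hat y|^2) \to 0$ as the parameters are tuned.

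Next, the parabolic Crandall-Ishii-Lions lemma produces parabolic jets: numbers $a, b$ with $a - b = \eta/(T-\hat t)^2 > 0$, momenta $p_w = 2(\hat x - \hat y)/\epsilon + 2\alpha \hat x$ and $p_v = 2(\hat x - \hat y)/\epsilon - 2\alpha \hat y$, and symmetric matrices $X, Y$ satisfying $X \le Y + C\alpha\,I$. The sub- and super-solution inequalities then read $a + F^\ast(p_w, X) \le 0$ and $b + F_\ast(p_v, Y) \ge 0$. In the regime $p_w, p_v \ne 0$, the operator $F$ is continuous, so degenerate ellipticity and the matrix inequality give $F^\ast(p_w, X) - F_\ast(p_v, Y) \le O(\alpha)$, contradicting $a - b = \eta/(T-\hat t)^2 > 0$ once $\alpha$ is small enough.

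The main obstacle is the singular case $p_w = p_v = 0$, in which $F^\ast$ and $F_\ast$ differ by their envelope definitions and the two inequalities cannot be subtracted directly. This is where the geometric invariance of \eqref{weq} enters: the Chen-Giga-Goto envelope definition forces admissible test jets at zero momentum to have enough structure on $X, Y$ that the matrix inequality $X \le Y + C\alpha\,I$ still delivers the contradictory sign after letting $\alpha \to 0$. This singular case is precisely the technical heart of \cite[Theorem~3.1.4]{Gigabook}; the unbounded-domain adaptation is supplied entirely by the coercive $\alpha$-penalty, so the passage from Giga's statement to the one above is routine once the zero-gradient envelope machinery is in hand.
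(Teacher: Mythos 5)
The paper does not actually prove Theorem \ref{adaptedcp}: it is quoted as a known result, ``following Theorem 3.1.4 in \cite{Gigabook}'', with no argument supplied. So your overall strategy -- the standard doubling of variables with a coercive $\alpha\left(|x|^2+|y|^2\right)$ penalty to exploit the one-sided bounds on $w$ and $-v$, semicontinuous envelopes for the singular operator, and an appeal to Giga/Chen--Giga--Goto for the delicate case -- is consistent with what the authors themselves rely on, and most of your outline (attainment of the maximum, exclusion of $t=0$ and $t=T$, the jet lemma, the nonsingular regime) is the standard machinery.

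However, the one step you describe in your own words, the case of vanishing gradients, is wrong as stated, and it is precisely the step that distinguishes this theorem from the uniformly parabolic case. With the quadratic penalty $|x-y|^2/\epsilon$ the parabolic Crandall--Ishii lemma only gives $X\le Y+C\alpha I$ with $|X|,|Y|$ of order $1/\epsilon$. At zero gradient the envelopes of $F(p,X)=-\tr X+\langle Xp,p\rangle/|p|^2$ are $F_\ast(0,X)=-\tr X+\lambda_{\min}(X)$ and $F^\ast(0,Y)=-\tr Y+\lambda_{\max}(Y)$ (and note the assignment is the opposite of what you wrote: subsolutions are tested with $F_\ast$, supersolutions with $F^\ast$). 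Subtracting the two viscosity inequalities then leaves a term $\lambda_{\max}(Y)-\lambda_{\min}(X)\ge0$ of size $O(1/\epsilon)$ with the wrong sign, which the matrix inequality does not control; the envelope structure does \emph{not} ``deliver the contradictory sign''. The actual mechanism in \cite{ChenGigaGotolevelset} and \cite{Gigabook} is to replace $|x-y|^2/\epsilon$ by a penalty such as $|x-y|^4/\epsilon$ (more generally $f(x-y)$ with $Df(0)=0$ and $D^2f(0)=0$), so that when the maximum occurs on the diagonal the test function's gradient \emph{and} Hessian vanish, the jets can be taken with $X\le Y$ and both small, and one only needs $F^\ast(0,0)=F_\ast(0,0)=0$, which holds because the operator is geometric; a related care is needed in the regime where $p_w,p_v$ are nonzero but much smaller than the $1/\epsilon$-sized Hessians. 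Since you defer exactly this point to \cite[Theorem 3.1.4]{Gigabook}, your proposal is acceptable as a reduction to the literature -- as is the paper's -- but the sketched justification of the singular case should either be deleted or repaired along the lines above before it can count as a proof.
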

 
\begin{theorem}[Monotone Convergence {\cite[Lemma
    4.2.11]{Gigabook}}] \label{monotoneconv} Consider functions
  \\$w_{0,m}, \; w_0 \in BUC(\R^n)$ such that $w_{0,m}\nearrow
  w_0$. Then if $w_m$ and $w$ are solutions to \eqref{weq} with
  initial data $w_{0,m}$ and $w_0$, respectively, we have for every
  time that $w_{m}\nearrow w$.
\end{theorem}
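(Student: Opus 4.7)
The plan is to combine monotonicity (supplied by the comparison principle) with the standard half-relaxed limit machinery of viscosity solutions. First, applying Theorem~\ref{adaptedcp} to the pairs $(w_m,w_{m+1})$ and $(w_m,w)$, the initial ordering $w_{0,m}\le w_{0,m+1}\le w_0$ yields $w_m(x,t)\le w_{m+1}(x,t)\le w(x,t)$ for every $(x,t)$. Consequently the pointwise limit $\tilde w(x,t):=\lim_{m\to\infty} w_m(x,t)$ exists and satisfies $\tilde w\le w$, so the task reduces to proving the reverse inequality.

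To identify $\tilde w$ with $w$, I would pass to the two half-relaxed limits
\[
\tilde w^{*}(x,t):=\limsup_{(y,s,m)\to(x,t,\infty)} w_m(y,s), \qquad
\tilde w_{*}(x,t):=\liminf_{(y,s,m)\to(x,t,\infty)} w_m(y,s),
\]
which by monotonicity in $m$ coincide with the upper and lower semicontinuous envelopes of $\tilde w$. The Barles--Perthame stability result for viscosity solutions of \eqref{weq} (valid in the Chen--Giga--Goto / Evans--Spruck generalized viscosity framework that underlies Theorem~\ref{existence}) shows that $\tilde w^{*}$ is a viscosity subsolution and $\tilde w_{*}$ a viscosity supersolution of \eqref{weq} on $\R^{n+2}\times(0,\infty)$.

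For the initial trace, the hypothesis $w_{0,m},w_0\in BUC(\R^{n+2})$ together with monotone convergence $w_{0,m}\nearrow w_0$ and the continuity of $w_0$ gives, via Dini's theorem on compact sets, locally uniform convergence $w_{0,m}\to w_0$. Combined with a uniform-in-$m$ modulus of continuity in time (which follows by applying the comparison principle of Theorem~\ref{adaptedcp} to $w_m$ and its time-translate against suitable sphere-type barriers, in the spirit of Lemma~\ref{hoelder t lem}), this yields $\tilde w^{*}(\cdot,0)=\tilde w_{*}(\cdot,0)=w_0$.

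The final step is to sandwich $\tilde w$ between continuous and semicontinuous copies of $w$ via comparison. Invoking the comparison principle in semicontinuous form between the subsolution $\tilde w^{*}$ and the (continuous) solution $w$, and between $w$ and the supersolution $\tilde w_{*}$, with matching initial data $w_0$, one obtains $\tilde w^{*}\le w\le \tilde w_{*}$. The trivial chain $\tilde w_{*}\le \tilde w\le \tilde w^{*}$ then forces $\tilde w=w$ and in fact upgrades the convergence $w_m\to w$ to local uniformity. The main obstacle is precisely this last comparison step: Theorem~\ref{adaptedcp} as stated covers only continuous sub- and supersolutions, whereas $\tilde w^{*}$ and $\tilde w_{*}$ are only upper, respectively lower, semicontinuous. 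What is needed is the standard extension of the comparison principle to semicontinuous sub- and supersolutions of the singular level-set equation \eqref{weq}, which is available in Giga's monograph and is the same machinery that already underpins Theorems~\ref{existence} and \ref{well-posed}.
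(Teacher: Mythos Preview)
The paper does not provide its own proof of this statement: Theorem~\ref{monotoneconv} sits in Appendix~\ref{lsf appendix}, whose stated purpose is to ``summarize some results in the literature,'' and the theorem is simply quoted from \cite[Lemma~4.2.11]{Gigabook} with no argument given. So there is no in-paper proof to compare your proposal against.

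That said, your sketch is the standard route to such a result and is essentially correct. The comparison principle gives the monotone ordering $w_m\le w_{m+1}\le w$, hence the pointwise limit $\tilde w\le w$; the Barles--Perthame stability theorem identifies the upper and lower half-relaxed limits as sub- and supersolutions; Dini plus a uniform short-time modulus pins down the initial data; and the semicontinuous comparison principle closes the argument. You are right that the only point requiring care is the last step, since Theorem~\ref{adaptedcp} as stated in the appendix is formulated for continuous sub- and supersolutions, whereas you need the semicontinuous version. That stronger comparison principle is precisely what is proved in \cite{Gigabook} and underlies both Theorems~\ref{existence} and~\ref{well-posed}, so invoking it is legitimate and consistent with the level of the appendix.
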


\begin{remark}
  \neueZeile
  \begin{enumerate}[(i)]
  \item The (non-truncated) signed distance function to $\partial E$
    may be defined as $d_E(x)=\dist(x, E)- \dist\left(x, \R^m\setminus
      E\right)$. In particular, we assume that the
    signed distance function to $\partial E$ is negative for every
    $x\in E$.
  \item In general, the initial conditions considered in Section
    \ref{levelset} will be given by truncated distance function to a
    set.
  \item If the set $\partial \Omega_0$ is compact and evolves smoothly
    under mean curvature flow, the level set formulation above agrees
    with the classical solution.
\end{enumerate}
\end{remark}

\def\emph#1{\textit{#1}}
\bibliographystyle{amsplain}
%\bibliography{barrier}
\def\us{\underline{\phantom{x}}}

\end{document}